\newtheorem{lemma}{Lemma}
\newtheorem{proposition}{Proposition}
\newtheorem{theorem}{Theorem}
\newtheorem*{thm}{Theorem}
\theoremstyle{definition}
\newtheorem{definition}{Definition}
\newtheorem{remark}{Remark}
\newcommand{\blue}{\color{blue}}
\newcommand{\hide}[1]{\ifbool{hidedetails}{}{{\blue #1}}}
\newcommand{\dramo}[3][]{{\ifthenelse{\isempty{#1}}{{\mathcal M}_{#2}^{#3}}
		{{\mathcal M}_{#2}^{#3}\lp #1\rp}}}
\newcommand{\Hom}[1][]{{{\rm Hom}_{#1}}}
\newcommand{\rhom}[1][]{{{\mathbb R}{\rm Hom}_{#1}}}
\newcommand{\qcoh}[2][]{{\ifthenelse{\isempty{#1}}
		{\mathfrak{QC}_{#2}}{{\mathfrak QC}_{#2}\lp #1\rp}}}					
\newcommand{\gect}[1]{{\Gamma_{#1}}}
\newcommand{\gramo}[2][]{{\ifthenelse{\isempty{#1}}{{\mathcal M}_{#2}}{{\mathcal M}_{#2}\lp #1\rp}}}
\newcommand{\posi}{{\mathbb N}}											                                           
\newcommand{\hooklongrightarrow}{\lhook\joinrel\longrightarrow}			
\newcommand{\w}[1]{{\widetilde{#1}}}
\newcommand{\oh}[1]{{\widehat{#1}}}
\newcommand{\e}[1]{{\emph{#1}}}
\newcommand{\ee}[1]{{{\bf #1}}}
\newcommand{\ce}[1]{{{\blue #1}}}
\newcommand{\cei}[1]{{{#1}}}
\newenvironment{eqn}{\begin{IEEEeqnarray*}{rCl}}{\end{IEEEeqnarray*}}
\newcommand{\n}{{\IEEEyesnumber}}
\newcommand{\F}{{\mathbb F}}
\newcommand{\hocolim}[1][]{{\underset{#1}{\rm hocolim}}}
\newcommand{\Ho}[1]{{{\rm Ho}\lp #1\rp}}
\newcommand{\noneg}{{\mathbb Z_{\geq 0}}}
\newcommand{\nopos}{{\mathbb Z_{\leq 0}}}
\newcommand{\homdi}{{\delta}}
\newcommand{\hilp}{{\overline{h}}}
\newcommand{\quack}[1][]{{\lb\Dquot_{\sche,\hilp}/\Gl{d}{\mathbb C}\rb}}
\newcommand{\stacks}[1]{{{\rm St}\lp #1\rp}}
\newcommand{\dgt}{{\mathbb Y}}
\newcommand{\spr}[1]{{{\rm SPr}\lp #1\rp}}
\newcommand{\yosh}[1]{{\w{#1}}}						
\newcommand{\yo}[1]{{\widehat{#1}}}						
\newcommand{\lifto}[1]{{\w{#1}}}
\newcommand{\oset}[1]{{\lf 0,\ldots,#1\rf}}
\newcommand{\sreso}[1][]{{\ifthenelse{\isempty{#1}}{\w{\mathcal R}}{{\w{\mathcal R}}\lp #1\rp}}}
\newcommand{\dmana}[1][]{{\w{\Dmana}}}
\newcommand{\I}{{\mathfrak I}}
\newcommand{\nuco}[1]{{\mathbf #1}}
\newcommand{\catpo}{{\nuco{0}}}
\newcommand{\reso}[1][]{{\ifthenelse{\isempty{#1}}{\mathcal R}{{\mathcal R}\lp #1\rp}}}
\newcommand{\kerma}[1][]{{K_{#1}}}
\newcommand{\pset}[1]{{P\lp #1\rp}}
\newcommand{\maspa}[2][\bullet]{{M_{#2}^{#1}}}
\newcommand{\laspa}[2][\bullet]{{L^{#1}_{#2}}}
\newcommand{\dga}[1][\bullet]{{A^{#1}}}
\newcommand{\ddga}[2][\bullet]{{A^{#1}_{#2}}}
\newcommand{\ddgb}[2][\bullet]{{B^{#1}_{#2}}}
\newcommand{\ddgc}[2][\bullet]{{C^{#1}_{#2}}}
\newcommand{\dgal}[2][\bullet]{{A^{#1}_{#2}}}
\newcommand{\denbu}[2][\bullet]{{{\mathcal D}^{#1}_{#2}}}
\newcommand{\genbu}[2][\bullet]{{{\mathcal E}^{#1}_{#2}}}
\newcommand{\denbub}[2][\bullet]{{\overline{\mathcal D}^{#1}_{#2}}}
\newcommand{\genbub}[2][\bullet]{{\overline{\mathcal E}^{#1}_{#2}}}
\newcommand{\henbu}[2][\bullet]{{{\mathcal G}^{#1}_{#2}}}
\newcommand{\fral}[2]{{{\mathbf F}_{#1}\lp #2\rp}}
\newcommand{\ske}[2]{{S_{#1,#2}}}
\newcommand{\kae}[1][\bullet]{{\Omega^{#1}}}
\newcommand{\hogy}[1][*]{{{ H}^{#1}}}
\newcommand{\derca}[1][]{{{\mathbf{A}}}}						
\newcommand{\dercac}[1][]{{\mathbf{DAlg}^{\rm c}}}
\newcommand{\we}[1][]{{{\mathcal W}_{#1}}}
\newcommand{\fib}[1][]{{{\mathcal F}_{#1}}}
\newcommand{\loca}[2][]{{{\mathcal L}_{#1}\lp #2\rp}}
\newcommand{\cocy}[1][]{{{\mathfrak C}_{#1}}}
\newcommand{\fibra}[2]{{{#1}\downarrow{#2}}}
\newcommand{\sderca}[1][]{{{{\mathbb{A}}}}}	
\newcommand{\fderca}[1][]{{\widetilde{\sderca[]}}}
\newcommand{\dgs}[1][]{{{\mathbb X}^{#1}}}
\newcommand{\hosh}[1][]{{{\mathcal H}^{#1}}}
\newcommand{\hofu}[1][]{{{\rm H}^{#1}}}
\newcommand{\ho}[1][]{{{\rm H}^{#1}}}
\newcommand{\Cs}[1][]{{{\rm Sch}}}
\newcommand{\cs}{{X}}
\newcommand{\streaf}[2][]{{{\mathcal O}^{#1}_{#2}}}
\newcommand{\smor}[1][\bullet]{{\phi^{#1}}}
\newcommand{\Dgs}[1][]{{{\mathfrak{S}}}}
\newcommand{\Dman}[1][]{{{{\mathfrak{M}}}}}
\newcommand{\Dgas}[1][]{{{\mathbb{AFM}}}}
\newcommand{\Dmana}[1][]{{{{\mathfrak A}}}}
\newcommand{\Gs}[1][]{{{\mathfrak{G}}}}
\newcommand{\pt}{{p}}
\newcommand{\lp}{\left(}
\newcommand{\rp}{\right)}
\newcommand{\lf}{\left\{}
\newcommand{\rf}{\right\}}
\newcommand{\lb}{\left[}
\newcommand{\rb}{\right]}
\newcommand{\sset}{{\rm SSet}}							
\newcommand{\mapis}[1][]{{{\rm Map}_{#1}}}					
\newcommand{\nerve}[1]{{{\mathcal N}\lp #1\rp}}					
\newcommand{\Gl}[2]{{{\rm GL}(#1,#2)}}						
\newcommand{\sche}{{\mathcal X}}							
\newcommand{\twish}[2][]{{\ifthenelse{\isempty{#1}}{\mathcal O_{#2}}{\mathcal O_{#2}(#1)}}}				
\newcommand{\hilpo}[1][]{{h_{#1}}}							
\newcommand{\simca}[1][]{{\mathbf{SAlg}}}						
\newcommand{\sima}[1]{{{#1}_\bullet}}						
\newcommand{\saff}[1][]{{\op{\simca}}}						
\newcommand{\op}[1]{{{\lp#1\rp}^{\rm op}}}						
\newcommand{\spec}[1]{{\mathbf{Spec}(#1)}}						
\newcommand{\spe}[1]{{{\rm Spec}(#1)}}	
\newcommand{\Dquot}{{\rm DQuot}}							
\newcommand{\Spec}{{\rm Spec}}
\begin{document}

\title[Shifted symplectic structures on derived Quot-stacks I]{Shifted symplectic structures on derived Quot-stacks I\\ -- Differential graded manifolds --}
\maketitle

\author{Dennis Borisov, Ludmil Katzarkov, Artan Sheshmani}
{Dennis Borisov${}^{0}$ and Ludmil Katzarkov$^{3,4}$ and Artan Sheshmani${}^{1,2,3}$}
\address{${}^0$  Department of Mathematics and Statistics, University of Windsor, 401 Sunset Ave., Windsor Ontario, Canada}
\address{${}^1$  Center for Mathematical Sciences and Applications, Harvard University, Department of Mathematics, 20 Garden Street, Cambridge, MA, 02139}
\address{${}^2$ Center for quantum geometry of moduli spaces , Ny Munkegade 118
	Building 1530, DK-8000 Aarhus C, Denmark}
\address{${}^3$ National Research University Higher School of Economics, Russian Federation, Laboratory of Mirror Symmetry, NRU HSE, 6 Usacheva str.,Moscow, Russia, 119048}
\address{${}^4$ University of Miami, Department of Mathematics, 1365 Memorial Drive, Ungar 515, Coral Gables, FL 33146}
\date{\today}

\begin{abstract} A theory of dg schemes is developed so that it becomes a homotopy site, and the corresponding infinity category of stacks is equivalent to the infinity category of stacks, as constructed by Toën and Vezzosi, on the site of dg algebras whose cohomologies have finitely many generators in each degree. Stacks represented by dg schemes are shown to be derived schemes under this correspondence.
	
	\smallskip
	
	\noindent{\bf MSC codes:} 14A20, 14J35, 14J40, 14F05
	
	\noindent{\bf Keywords:} Derived Quot scheme, Derived Quot Stack, Simplicial localization.
	
\end{abstract}

\tableofcontents

\section*{Introduction}
Our goal is to develop explicit constructions of various derived stacks of coherent sheaves on Calabi--Yau manifolds and equip them with representatives of the corresponding shifted symplectic forms. Here is what we mean by the word \e{explicit}: the derived stacks should be quotients of actions of groups on some ``derived enhancements`` of usual projective schemes, these usual schemes and the group actions on them should be already known. This need for explicit constructions comes from the applications of shifted symplectic forms developed in \cite{BSY2}.

Demanding that our stacks are quotients of schemes means that we look at bounded families of coherent sheaves. Hence, following the usual custom, we fix a Hilbert polynomial and look at the corresponding semi-stable sheaves. Then the underlying classical scheme is the usual Quot-scheme, and the group action is known as well (e.g.\@ \cite{HuyLehn}).

\smallskip

It remains to construct the ``derived enhancements''. The common approach to derived geometry is that of \cite{HAG1}, \cite{HAG2}. We look at schemes over $\mathbb C$, so for us this approach means substituting affine schemes over $\mathbb C$  with simplicial or differential non-positively graded $\mathbb C$-algebras and then working with hypersheaves valued in simplicial sets with respect to the étale topology. This setting is also where the shifted symplectic structures come from, as defined in \cite{PTVV13}. 

A more explicit description of ``derived enhancement'' is possible in the approach of \cite{DerQuot}, where dg schemes are defined to be the usual schemes together with an extra structure given in terms of sheaves of differential non-positively graded algebras. Therefore our goal would be achieved, if we do the following:\begin{enumerate}
	\item \label{FirstStep} Develop a theory of dg schemes so that it becomes a homotopy site,\footnote{By a homotopy site we mean a category enriched in simplicial sets and equipped with an $S$-topology as defined in \cite{HAG1}.} and the corresponding infinity category of stacks is equivalent to the infinity category of stacks (as constructed in  \cite{HAG1}, \cite{HAG2}) on the site of differential non-positively graded $\mathbb C$-algebras whose cohomologies have finitely many generators in each degree. Stacks represented by dg schemes should be derived schemes under this correspondence.
	\item \label{SecondStep} Develop a procedure, that for each smooth projective scheme $\lp\sche,\twish[1]{\sche}\rp$ constructs dg Quot-schemes, together with the actions of the corresponding general linear groups. Under the correspondence from (\ref{FirstStep}) each such dg Quot-scheme should be equipped with a morphism into the stack of perfect complexes on $\sche$. This morphism should factor through the stacky quotient by the action of the general linear group, and the resulting morphism of stacks should be formally étale.
\end{enumerate}
In this paper we perform the first step. The second step is postponed to another paper for reasons to be discussed later in this Introduction.

\medskip

As far as the first step is concerned, there is nothing special about $\mathbb C$, so we do everything over a fixed ground field $\F$ of characteristic $0$. 

We need to have a manageable homotopy theory of dg schemes. In \cite{DerQuot} they are organized into a category.
We would like to be able to compute the mapping spaces in the simplicial localization of this category. This requires us to choose weak equivalences and to put some restrictions. \e{A dg manifold} is a dg scheme whose degree $0$ component is smooth and quasi-projective over $\F$, and the rest of the dg structure sheaf is locally almost free over the degree $0$-part. 

The notion of weak equivalences we use is stronger than the notion of quasi-isomorphisms introduced in \cite{DerQuot}. We put an additional condition for the morphisms to be almost affine (Definition \ref{NewDefinition}). The issue here is that a dg scheme can be quasi-isomorphic to the spectrum of an affine dg scheme, yet not being a spectrum itself (the ambient classical scheme might not be affine). Altogether we have the following

\begin{thm} (Thm.\@ \ref{FibrantObjects}) The category \cei{$\Dman[\F]$} of dg manifolds can be equipped with the structure of a category of fibrant objects, where fibrations are morphisms that are smooth in degree $0$ and locally almost free in negative degrees (Def.\@ \ref{DefFibrations}).\end{thm}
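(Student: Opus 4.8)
The plan is to verify, for the triple consisting of $\Dman[\F]$, the weak equivalences of Definition~\ref{NewDefinition}, and the fibrations of Definition~\ref{DefFibrations}, the axioms of a category of fibrant objects (in the sense of K.\,S.\,Brown): (i) there is a terminal object over which every object is fibrant; (ii) weak equivalences satisfy the two-out-of-three property; (iii) fibrations and trivial fibrations (i.e.\ fibrations that are also weak equivalences) are each closed under composition and contain the isomorphisms; (iv) pullbacks of fibrations (resp.\ trivial fibrations) along arbitrary morphisms exist in $\Dman[\F]$ and are again fibrations (resp.\ trivial fibrations); (v) every morphism factors as a weak equivalence followed by a fibration. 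Finite products in $\Dman[\F]$, which are also needed, then come for free as pullbacks over the terminal object, since every object is fibrant.

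Axioms (i)--(iii) I expect to be essentially bookkeeping. The terminal object is $\spe{\F}$ with its trivial dg structure, which lies in $\Dman[\F]$; and for any dg manifold $\lp X,\streaf[\bullet]{X}\rp$ the structure morphism to $\spe{\F}$ is smooth in degree $0$ because $X$ is smooth over $\F$, and locally almost free in negative degrees by the very definition of a dg manifold, so every object is fibrant --- this is precisely why Definition~\ref{DefFibrations} imposes a condition only in negative degrees. For (ii), quasi-isomorphisms of dg schemes satisfy two-out-of-three since they are detected on the classical truncations and on the cohomology sheaves; a short lemma, using that a quasi-isomorphism restricts to an isomorphism of truncations together with elementary stability properties of affine morphisms, handles the almost-affine clause. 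For (iii) the point is that ``smooth in degree $0$'' and ``locally almost free in negative degrees'' are each transitive: a free graded-commutative algebra over a free graded-commutative algebra is again free graded-commutative, its generating set being the union of the two, and having finitely many generators in each degree is preserved; isomorphisms, and the intersection with weak equivalences, are then immediate.

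For (iv) I would show that the dg-scheme fibre product along a fibration is computed without any derived correction. If $p\col\lp P,\streaf[\bullet]{P}\rp\to\lp Y,\streaf[\bullet]{Y}\rp$ is a fibration and $g\col\lp Z,\streaf[\bullet]{Z}\rp\to\lp Y,\streaf[\bullet]{Y}\rp$ is arbitrary, then $P\times_Y Z$ is smooth over $Z$, hence smooth quasi-projective over $\F$, and since locally $\streaf[\bullet]{P}$ is obtained from the pullback of $\streaf[\bullet]{Y}$ by freely adjoining finitely many generators in each negative degree --- hence is flat over that pullback --- the naive tensor product already computes the pullback, remains locally almost free, and makes the projection $P\times_Y Z\to Z$ a fibration; fullness of $\Dman[\F]$ inside dg schemes identifies this with the pullback inside $\Dman[\F]$. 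When $p$ is moreover a weak equivalence, the same flatness shows the naive pullback agrees with the derived one, so the quasi-isomorphism is preserved, and stability of affine morphisms under base change preserves the almost-affine clause.

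The main work --- and the step I expect to be the principal obstacle --- is the factorization axiom (v); once it is available, Brown's formalism yields the rest, path objects for $Y$ being recovered by applying it to the diagonal $Y\to Y\times_\F Y$. Given $f\col\lp X,\streaf[\bullet]{X}\rp\to\lp Y,\streaf[\bullet]{Y}\rp$, the idea is to factor the underlying morphism through its graph, $X\xrightarrow{\ \Gamma_f\ }X\times_\F Y\xrightarrow{\ \mathrm{pr}_Y\ }Y$: the projection is smooth, and $\Gamma_f$ is a regular closed immersion, being a section of the smooth projection $\mathrm{pr}_X$, with conormal sheaf $f^{*}\kae[1]_{Y/\F}$. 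On $X\times_\F Y$ one starts from $\mathrm{pr}_X^{*}\streaf[\bullet]{X}\otimes_{\streaf{X\times_\F Y}}\mathrm{pr}_Y^{*}\streaf[\bullet]{Y}$, which already makes $\mathrm{pr}_Y$ a fibration, and then builds a locally almost free resolution of $\Gamma_{f*}\streaf[\bullet]{X}$ over it by iteratively adjoining, in each successive negative degree, a locally free sheaf of finite rank surjecting onto the cohomology that still remains to be killed; such a sheaf exists because $X\times_\F Y$ is quasi-projective, so a sufficiently negative twist of any coherent sheaf is globally generated. The points that need care are: the resolution need not terminate --- only finitely many generators \emph{in each degree} are required --- so regularity of $\Gamma_f$ enters only to guarantee that the procedure converges to a quasi-isomorphism, not to bound its length; adjoining generators in a new negative degree leaves the cohomology already arranged in higher degrees undisturbed, so the colimit is genuinely quasi-isomorphic to $\Gamma_{f*}\streaf[\bullet]{X}$; and every generator is adjoined \emph{freely over the pullback of $\streaf[\bullet]{Y}$}, which is what keeps the projection a fibration throughout. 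The resulting dg manifold $P$ then fits into $\lp X,\streaf[\bullet]{X}\rp\xrightarrow{\Gamma_f}P\xrightarrow{\mathrm{pr}_Y}\lp Y,\streaf[\bullet]{Y}\rp$ with $\mathrm{pr}_Y$ a fibration and $\Gamma_f$ a quasi-isomorphism which, being a closed immersion on underlying schemes, is almost affine --- hence a weak equivalence.
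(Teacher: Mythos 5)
Your outline of the axioms, the identification of $\spec{\F}$ as the terminal object with every object fibrant, the pullback-stability argument for fibrations (almost free implies flat, so the naive fibre product computes the derived one), and the graph-plus-killing-cocycles factorization are all sound and closely parallel the paper's Propositions~\ref{WrongCatF} and~\ref{DiaFa} (the paper factors only the diagonal and then invokes Brown's factorization lemma, but your direct factorization through the graph is an equivalent variant). The genuine gap is in your treatment of the weak equivalences. You write as if a weak equivalence were essentially a quasi-isomorphism, with the ``almost-affine clause'' disposed of by ``elementary stability properties of affine morphisms.'' But Definition~\ref{NewDefinition} requires, for $\dgs_1\rightarrow\dgs_2$ to be a weak equivalence, that for \emph{every} fibration $\dgs\rightarrow\dgs_2$ from an affine dg manifold the pullback $\dgs\underset{\dgs_2}\times\dgs_1$ be almost affine, i.e.\@ admit a quasi-isomorphism from some affine dg manifold. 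This is a global condition on the ambient classical scheme, not a property of a morphism being affine, and it does not follow from base-change stability of affine morphisms.

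Concretely, two of your steps break down without further input. For the two-out-of-three case where $\beta$ and $\beta\circ\alpha$ are weak equivalences, and for pullback-stability of trivial fibrations, one is handed a fibration $\dgs\rightarrow\dgs_2$ (resp.\@ $\dgs\rightarrow\dgs_3$) whose composite to the target of the given weak equivalence is \emph{not} a fibration, so the defining condition cannot be applied directly; one must first factor that composite, and then transport almost-affineness back across a quasi-isomorphism pointing the wrong way. The paper does this via the double pullback $\dgs'''$ in Proposition~\ref{PullTrivial}, and the transport step rests on Proposition~\ref{AmpleEx}: a fibration-and-quasi-isomorphism onto an affine dg manifold admits an affine replacement provided an ample line bundle restricted to the underlying classical scheme extends, which is proved by an explicit hyperplane-section avoidance argument followed by a quasi-affine-to-affine lemma. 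This is the technical heart of the theorem (it is precisely why the authors strengthen quasi-isomorphisms to weak equivalences in the first place), and your proposal contains no substitute for it.
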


The main axioms for categories of fibrant objects are stability of (trivial) fibrations with respect to pullbacks, and existence of factorizations. They easily follow from our fibrations being smooth in degree $0$ and locally almost free in negative degrees.

The structure of a category of fibrant objects gives us a better way for computing mapping spaces, than just the hammock localization available in general (\cite{DK2} \S2). The mapping space between two dg manifolds is weakly equivalent to the nerve of the category of cocycles (i.e.\@ spans with the first leg being a trivial fibration) between the two objects. In other words the hammocks can be taken to be very short.

\smallskip

Given a differential non-positively graded $\F$-algebra $\dga$ we have the dg scheme $\cei{\spec{\dga}}$, which consists of the usual $\Spec\lp\dga[0]\rp$ and the sheaf of dg $\streaf{\Spec\lp\dga[0]\rp}$-algebras obtained by localizing $\dga$. If $\dga[0]$ is smooth over $\F$ and $\dga$ is almost free over $\dga[0]$, we have $\spec{\dga}\in\Dman[\F]$. This gives us the full subcategory $\Dmana[\F]\subset\Dman[\F]$ of \e{affine dg manifolds}. The opposite category $\sderca[\F]\cong\op{\Dmana[\F]}$ is a full subcategory of the category $\derca[\F]$ of all differential non-positively graded $\F$-algebras.

We use $\Dmana[\F]$ as the bridge between dg manifolds and affine derived schemes according to \cite{HAG1}, \cite{HAG2}. It will be obvious that $\Dmana[\F]$ inherits the structure of a category of fibrant objects from $\Dman[\F]$. Moreover, when restricted to $\Dmana[\F]$ our notion of weak equivalences coincides with that of quasi-isomorphisms. Using our quasi-projectivity assumption on dg manifolds, we establish the following

\begin{thm} (Thm.\@ \ref{ManifoldsInclusion}) The inclusion $\Dmana[\F]\hookrightarrow\Dman[\F]$ defines a homotopically full and faithful functor between the corresponding simplicial localizations (i.e.\@ the morphisms on mapping spaces are weak equivalences).\end{thm}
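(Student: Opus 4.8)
\emph{Strategy.} The plan is to reduce the statement to a comparison of categories of cocycles and then to an application of Quillen's Theorem A. By Theorem \ref{FibrantObjects} the category $\Dman[\F]$ is a category of fibrant objects, and $\Dmana[\F]$ inherits this structure, its (trivial) fibrations and weak equivalences being the restrictions of those of $\Dman[\F]$, with weak equivalences on $\Dmana[\F]$ coinciding with quasi-isomorphisms. Hence, for any two objects, the mapping space in the simplicial localization is computed — up to a natural zig-zag of weak equivalences — as the nerve of the category of cocycles between them, i.e.\ of spans whose first leg is a trivial fibration, and this holds in both $\Dman[\F]$ and $\Dmana[\F]$. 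Since the inclusion is a functor of categories of fibrant objects, for affine dg manifolds $X$ and $Y$ it induces a functor $\iota_{X,Y}$ from the cocycle category in $\Dmana[\F]$ to the one in $\Dman[\F]$, compatibly with those zig-zags; so it suffices to show that $\iota_{X,Y}$ induces a weak equivalence on nerves for all affine $X,Y$. The full-faithfulness statement for simplicial localizations then follows, compatibility with composition being automatic since $\iota_{X,Y}$ is induced by a functor of categories (cf.\ \cite{DK2}).

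\emph{The key geometric point.} Fix $X,Y\in\Dmana[\F]$ and a cocycle $c=\lp X\xleftarrow{p}W\xrightarrow{f}Y\rp$ in $\Dman[\F]$; write $W_0$ for the degree-$0$ scheme of $W$. As $p$ is a weak equivalence it is a quasi-isomorphism, so it identifies the classical scheme $\pi_0(W)$ cut out by $\mathcal H^0$ of the structure sheaf of $W$ with $\pi_0(X)$, a closed subscheme of the affine scheme underlying $X$, hence affine; and $p$, being a weak equivalence, is almost affine in the sense of Definition \ref{NewDefinition}. Using this together with the quasi-projectivity of $W_0$, I would produce an affine open $U\subseteq W_0$ containing $\pi_0(W)$ on which the dg structure sheaf is still almost free — concretely, taking $U$ to be the complement of the support of a sufficiently high-degree hypersurface section of a projective closure of $W_0$, chosen to contain the boundary of $W_0$ and to meet the closure of $\pi_0(W)$ only there, which exists by Serre vanishing. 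Then the open dg submanifold $W|_U$ lies in $\Dmana[\F]$, the inclusion $W|_U\hookrightarrow W$ is an almost affine weak equivalence (an open immersion containing $\pi_0(W)$) that is smooth in degree $0$ and locally almost free in negative degrees, so the restriction of $p$ to it is again a trivial fibration; the restricted legs thus give a cocycle $c|_U$ in $\Dmana[\F]$ together with a morphism $c|_U\to c$. (If restriction alone does not land in $\Dmana[\F]$ on the nose, one further replaces the ring of global sections on $U$ by a quasi-free dg algebra with smooth degree-$0$ part.) In particular the comma category $\iota_{X,Y}/c$ is nonempty.

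\emph{Contractibility and conclusion.} To apply Theorem A it remains to see that each $\iota_{X,Y}/c$ is contractible, and I would establish this by exhibiting binary products in it: given two affine resolutions $c_1,c_2\to c$ with apices $W_1,W_2\to W$, their pullback over $c$ — which exists in the category of fibrant objects, after replacing one structure map by a fibration if necessary — has apex $W_1\times_W W_2$, which is again \emph{affine} since the weak equivalences $W_i\to W$ are almost affine (so $W_1\times_W W_2\to W_2$ is an affine morphism onto the affine $W_2$) and is again a dg manifold because ``smooth in degree $0$, locally almost free in negative degrees'' is stable under base change; this defines an object of $\iota_{X,Y}/c$ dominating both. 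A category possessing binary products and at least one object has contractible nerve (the projections out of $x_0\times(-)$ give natural transformations $\mathrm{id}\Leftarrow x_0\times(-)\Rightarrow\mathrm{const}_{x_0}$, whence $\mathrm{id}$ on the nerve is homotopic to a constant map). So $\iota_{X,Y}/c$ is contractible, Theorem A gives that $\iota_{X,Y}$ induces a weak equivalence on nerves, and Step one finishes the proof.

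\emph{Where the difficulty lies.} The main obstacle is the geometric step in the second paragraph: extracting inside the quasi-projective $W_0$ an affine open neighbourhood of the classical locus $\pi_0(W)$ on which the dg structure sheaf remains almost free, and reconciling this with the ``almost affine'' hypothesis on $p$ — or, if one prefers to argue by a quasi-free replacement of global sections, doing so with smooth degree-$0$ part so as to remain inside $\Dmana[\F]$ literally. This is precisely where the quasi-projectivity assumption on dg manifolds is indispensable, entering through Serre vanishing and complements of ample hypersurface sections. A secondary technical point is the bookkeeping of the third paragraph — arranging the fiber products of affine resolutions to exist in the category of fibrant objects and to stay in $\Dmana[\F]$, for which the almost-affine condition on weak equivalences is exactly what is needed — together with checking that the cocycle description of mapping spaces is natural enough to feed into the Dwyer--Kan comparison of simplicial localizations.
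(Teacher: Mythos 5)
Your overall strategy --- compute mapping spaces as nerves of cocycle categories and show that affine resolutions are cofinal --- is the same as the paper's, but your implementation of the cofinality step has a genuine gap. You propose to apply Quillen's Theorem A by showing each comma category $\iota_{X,Y}/c$ has binary products, constructed as $W_1\underset{W}\times W_2$. The structure maps $W_i\rightarrow W$ of objects of this comma category are only weak equivalences, not fibrations, so this fiber product need not exist in $\Dman[\F]$; and after you replace, say, $W_1\rightarrow W$ by a fibration $W_1\rightarrow W_1'\rightarrow W$, the retraction $W_1'\rightarrow W_1$ no longer commutes strictly with the maps to $W$ (the two composites $W_1'\rightarrow W$ only agree up to homotopy), so the resulting object does not admit a morphism to $(W_1,\phi_1)$ \emph{in the comma category}, let alone satisfy a universal property. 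The paper avoids this entirely: it writes the nerve of the cocycle category as a homotopy colimit of $\Hom\lp\dgs_2,\dgs_3\rp$ indexed by $\fibra{\Dman[\F]}{\dgs_1}$ (so the leg to $Y$ is part of the diagram, not of the indexing category), reduces to left cofinality of $\fibra{\Dmana[\F]}{\dgs_1}\hookrightarrow\fibra{\Dman[\F]}{\dgs_1}$, and proves contractibility of $\lp\fibra{\Dmana[\F]}{\dgs_1}\rp/\dgs_2$ by fixing one affine trivial fibration $\dgs\rightarrow\dgs_2$ and observing that pullback along it is right adjoint to composition with it, landing in a category with a terminal object. If you want to salvage your Theorem A route you would need to replace ``binary products'' by some such adjunction or filteredness-up-to-homotopy argument.

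Your ``key geometric point'' is also shakier than it needs to be. The statement you want --- every trivial fibration $W\rightarrow X$ in $\Dman[\F]$ with $X$ affine admits an affine trivial fibration $\spec{\ddgb{}}\rightarrow W$ --- is exactly Remark \ref{Sim} combined with Proposition \ref{Simple}(2), and you should simply invoke it. Your direct construction (complement of a high-degree hypersurface section of a projective closure of $W_0$) is essentially the proof of Proposition \ref{AmpleEx}, but that proposition needs the hypothesis that an ample line bundle on $W_0$ restricted to $\hofu[0]\lp W\rp$ extends to the affine target; the paper supplies this by first factoring through a path object whose ambient scheme is a product (Remark \ref{GoodFactorization}), a step your sketch omits. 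As written, ``which exists by Serre vanishing'' does not produce an affine open containing all of $\pi_0(W)$ without that extra input.
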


Next we need to compare mapping spaces in $\sderca[\F]$ and $\derca[\F]$. The most difficult result of this paper is the following

\begin{thm} (Thm.\@ \ref{AlgebrasInclusion}) The inclusion $\sderca[\F]\hookrightarrow\derca[\F]$ defines a homotopically full and faithful functor between the corresponding simplicial localizations.\end{thm}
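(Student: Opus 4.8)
The plan is to reduce the statement to a cofinality property, having first replaced $A$ and $B$ by cofibrant objects. Fix $A,B\in\sderca[\F]$. Every object of $\sderca[\F]$ is connected by a quasi-isomorphism to a cell algebra of finite type — whose degree-$0$ component is a finitely generated polynomial algebra, hence smooth and quasi-projective, so that such an algebra is at once an object of $\sderca[\F]$ and a cofibrant object of $\derca[\F]$ — and a quasi-isomorphism is a weak equivalence in both $\sderca[\F]$ and $\derca[\F]$; since the two mapping spaces, and the natural map between them, are invariant under such replacements, I may assume $A$ and $B$ are cofibrant in $\derca[\F]$. Now, by the cocycle description of mapping spaces in categories of fibrant objects established earlier, applied (after dualizing) to $\Dmana[\F]$, which inherits such a structure from $\Dman[\F]$ by Theorem~\ref{FibrantObjects}, the mapping space in the simplicial localization of $\sderca[\F]$ is the nerve of the category $\mathcal{E}_{\sderca}$ of diagrams $A\to Z\leftarrow B$ with $Z\in\sderca[\F]$ and $B\to Z$ a trivial cofibration in $\sderca[\F]$ — meaning the corresponding morphism of affine dg manifolds is smooth in degree $0$, locally almost free in negative degrees, and a quasi-isomorphism — and the mapping space in the simplicial localization of $\derca[\F]$ is the nerve of the category $\mathcal{E}_{\derca}$ of diagrams $A\to Z\leftarrow B$ with $Z$ an arbitrary finite-type dg-algebra and $B\to Z$ a weak equivalence. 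Since a trivial cofibration in $\sderca[\F]$ is a quasi-isomorphism, the inclusion induces a functor $\Phi\colon\mathcal{E}_{\sderca}\to\mathcal{E}_{\derca}$, and the theorem is equivalent to $N(\Phi)$ being a weak equivalence.

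By Quillen's Theorem~A it suffices to show that for every object $e=(A\xrightarrow{a}Z\xleftarrow{b}B)$ of $\mathcal{E}_{\derca}$ the comma category $\Phi/e$ has weakly contractible nerve; its objects are the quadruples $(\tilde Z,\tilde a,\tilde b,\chi)$ with $\tilde Z\in\sderca[\F]$ of finite type, $\tilde a\colon A\to\tilde Z$, $\tilde b\colon B\to\tilde Z$ a trivial cofibration in $\sderca[\F]$, and $\chi\colon\tilde Z\to Z$ satisfying $\chi\tilde a=a$, $\chi\tilde b=b$. I would first establish non-emptiness by a rigidification of $e$. Since $b$ is a quasi-isomorphism, factor it in the model category $\derca[\F]$ as $B\xrightarrow{\tilde b}\tilde Z\xrightarrow{\chi}Z$ with $\tilde b$ a trivial cofibration and $\chi$ a fibration — hence an acyclic fibration — choosing the factorization of finite type, which is possible because $Z$ has finitely many cohomology generators in each degree. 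Then $\tilde Z$ is a finite-type cell algebra, so $\tilde Z^{0}$ is a finitely generated polynomial algebra over $B^{0}$ — in particular smooth and quasi-projective — whence $\tilde Z\in\sderca[\F]$; and $\tilde b\colon B\to\tilde Z$ is, in degree $0$, a coordinate projection $\spec{B^{0}}\times\mathbb{A}^{k}\to\spec{B^{0}}$, which is smooth, is almost free in negative degrees, and is a quasi-isomorphism, so it is a trivial cofibration in $\sderca[\F]$. Finally, since $A$ is cofibrant in $\derca[\F]$ and $\chi$ is an acyclic fibration, the map $a\colon A\to Z$ lifts to $\tilde a\colon A\to\tilde Z$ with $\chi\tilde a=a$; the quadruple $(\tilde Z,\tilde a,\tilde b,\chi)$ is then an object of $\Phi/e$.

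For contractibility I would make the factorization in $\derca[\F]$ functorial in $e$, so that the functorial rigidification is a weakly initial object of $\Phi/e$; the remaining freedom is the choice of the lift $\tilde a$, whose space is contractible because $A$ is cofibrant and $\chi$ is an acyclic fibration, so $N(\Phi/e)$ is contractible. (Alternatively one checks directly that $\Phi/e$ is homotopy-filtered, forming common refinements via a further application of the factorization axiom.) This yields the theorem. Let me observe, moreover, that combined with the fact that every finite-type dg-algebra is quasi-isomorphic to an object of $\sderca[\F]$, the argument shows $\sderca[\F]\hookrightarrow\derca[\F]$ induces a homotopy equivalence of simplicial localizations, which is what underlies the equivalence of the infinity categories of stacks announced in the abstract.

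The genuine difficulty — the reason this is singled out as the most difficult result of the paper — is in two places that the skeleton above passes over quickly. The first is the finiteness constraint: a dg manifold is required to have degree-$0$ component smooth \emph{and quasi-projective}, so the transfinite cofibrant replacements of $\derca[\F]$ do not themselves lie in $\sderca[\F]$, and every resolution must be performed with finitely many generators in each degree while still exhausting all cohomology — this is where the hypothesis that cohomologies have finitely many generators in each degree is used in an essential way, and where the bookkeeping is delicate. The second is the degree-$0$ analysis: a trivial cofibration in $\sderca[\F]$ has to be \emph{smooth}, not merely surjective or a closed immersion, in degree $0$, so each use of the factorization axiom for $\Dman[\F]$ and each comparison of a trivial cofibration in $\sderca[\F]$ with an acyclic fibration in $\derca[\F]$ must be checked to preserve smoothness and quasi-projectivity in degree $0$. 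Here the characteristic-$0$ hypothesis enters, through the regularity of smooth $\F$-algebras, the Jacobian criterion, and the fact that the cotangent complex of a smooth $\F$-algebra is a finite projective module concentrated in degree $0$, which is what makes the requisite liftings and factorizations exist globally. I expect the proof to spend most of its length on these degree-$0$ truncations and finite-type resolutions, with the homotopical formalism above serving only as scaffolding.
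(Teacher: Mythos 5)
Your proposal has the right outer shell (reduce to a cofinality statement between span categories, then use lifting against acyclic fibrations), but the step it leans on is exactly the one that fails, and it is not the route the paper takes. You assert that $b\colon B\to Z$ can be factored as a trivial cofibration $B\to\tilde Z$ followed by an acyclic fibration $\tilde Z\to Z$ \emph{with $\tilde Z$ of finite type}, "which is possible because $Z$ has finitely many cohomology generators in each degree." This is false in general: fibrations in $\derca[\F]$ are the maps that are surjective in every negative degree, so $\tilde Z^{k}\to Z^{k}$ must be onto for all $k<0$; if $\tilde Z\in\sderca[\F]$ then $\tilde Z^{k}$ is a finitely generated module over the Noetherian ring $\tilde Z^{0}$, whereas $Z^{k}$ need not be finitely generated over anything even when $H^{k}(Z)$ is finitely generated (take $Z$ to be any standard infinitely generated cofibrant or fibrant-over-something replacement). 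So for such $e$ your comma category $\Phi/e$ is not shown to be nonempty by this construction, and Quillen's Theorem~A requires \emph{every} $e$; restricting the apexes of $\mathcal{E}_{\derca[\F]}$ to finite-type algebras is itself an unproved cofinality claim. The second gap is contractibility: a "weakly initial object" does not make a nerve contractible, and the functorial finite-type factorization you would need to produce one is precisely the hard content of the paper's Theorem~\ref{FunctorialResolutions} --- which you cannot simply invoke in your form, because that theorem resolves objects \emph{of} $\sderca[\F]$ inside $\sderca[\F]$ and never produces a finite-type fibrant replacement over an arbitrary $Z\in\derca[\F]$.

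The paper avoids your factorization entirely. It builds a functorial \emph{cosimplicial resolution} $\reso\colon\Dmana[\F]\to\Dmana[\F]^{\Delta}$ (special with respect to the model structure on $\op{\derca[\F]}$ but landing in $\Dmana[\F]$), computes $\mapis[\loca{\op{\derca[\F]}}]\lp\dgs,\dgs'\rp$ as $\Hom[{\op{\derca[\F]}}]\lp\reso\lp\dgs\rp,\dgs'\rp$ \`a la Dwyer--Kan, and compares this with the nerve of the cocycle category in $\Dmana[\F]$ by showing that the subcategory $\mathfrak D=\sreso\lp\lp\fibra{\Dmana[\F]}{\dgs}\rp\times\Delta\rp$ is left cofinal in $\mathfrak D\ltimes\mathfrak F$, where $\mathfrak F$ consists of trivial fibrations in $\op{\derca[\F]}$ onto the fixed $\dgs\in\Dmana[\F]$; the contractibility of $\nerve{\mathfrak D/\dgs''}$ then comes from the resolution property of $\reso$ applied to the trivial fibration $\dgs''\to\dgs$, not from a lifting of maps into an arbitrary target. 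In other words, the only objects that ever need to be "finitely resolved" are those already in $\sderca[\F]$, and the functoriality is obtained by the long combinatorial construction of Theorem~\ref{FunctorialResolutions}. Your closing paragraph correctly locates the difficulties (finite generation in each degree, smoothness in degree $0$), but the skeleton above does not survive them as written.
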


Notice that $\op{\derca[\F]}$ is a model category, so it also has the notion of fibrations, but it is different from the one in $\Dmana[\F]\cong\op{\sderca}$. A cofibration in $\derca[\F]$ is a retract of an almost free morphism, or in terms of schemes a retract of a dg vector bundle. In $\Dmana[\F]$ a fibration needs to be almost free only in the negative degrees, while in degree $0$ it can be just smooth.

The key technical result needed to prove the last theorem is 

\begin{thm} (Thm.\@ \ref{FunctorialResolutions}) There is a functor $\sderca[\F]\rightarrow\sderca[\F]^{\Delta^{\rm op}}$ that defines simplicial resolutions with respect to the model structure on $\derca[\F]$.\end{thm}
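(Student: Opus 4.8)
The plan is to produce $R\col\sderca[\F]\rightarrow\sderca[\F]^{\Delta^{\rm op}}$ as a functorial simplicial frame, built by the usual skeletal induction, with the one non-formal ingredient being a finiteness-controlled functorial factorization inside $\sderca[\F]$. Recall that a simplicial resolution of $B$ is a simplicial object $R(B)$ with $R(B)_{0}=B$, equipped with a natural map from the constant diagram $\underline{B}\rightarrow R(B)$, such that (a) every structural map $B\rightarrow R(B)_{n}$ is a quasi-isomorphism and (b) $R(B)$ is Reedy fibrant for the model structure on $\derca[\F]$, i.e.\@ each matching map $R(B)_{n}\rightarrow M_{n}R(B)$ is a fibration, a surjection in negative degrees. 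Only (b) carries content: the constant diagram $\underline{B}$ already satisfies (a), but its matching map in simplicial degree $1$ is the diagonal $B\rightarrow B\times B$, which is a surjection in negative degrees precisely when $B$ is classical. So a resolution must genuinely thicken $B$ in the simplicial direction, and the whole point is to do this without leaving $\sderca[\F]$.

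Concretely, I would set $R(B)_{0}:=B$, and inductively, given the $(n-1)$-skeleton of $R(B)$ constructed functorially in $B$ together with its coaugmentation, form the matching object $M_{n}R(B)$ (a finite limit of the terms already built) and factor the canonical map
\[
B=\underline{B}_{n}\;\longrightarrow\;R(B)_{n}\;\twoheadrightarrow\;M_{n}R(B)
\]
functorially, with the first map a quasi-isomorphism and the second a fibration; the faces and degeneracies in simplicial degree $n$ are then the evident ones. For this to be meaningful one needs, by the same induction, that the matching objects remain in $\sderca[\F]$, and I would secure this by keeping the fibrations in the factorization \emph{geometric}: smooth on degree-zero components and locally almost free in negative degrees. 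The iterated matching limits are then built by products of, and base changes along, such maps, so that smoothness and quasi-projectivity of the degree-zero parts are preserved, as is local almost-freeness over those parts; these geometric fibrations are exactly the fibrations of the category-of-fibrant-objects structure on $\Dmana[\F]\cong\op{\sderca[\F]}$, which is what makes $R$ compatible simultaneously with the model structure of $\derca[\F]$ and with the intrinsic homotopy theory of $\sderca[\F]$.

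Everything therefore reduces to a single factorization lemma in $\derca[\F]$: a morphism between objects of $\sderca[\F]$ must factor, functorially in the morphism, as a quasi-isomorphism into an object of $\sderca[\F]$ followed by a geometric fibration. This is the main obstacle, and the difficulty is \emph{finiteness}, not homotopy. The factorization supplied by the small object argument in $\derca[\F]$ attaches acyclic pairs of free generators — and iterating it produces infinitely many generators in a fixed negative degree, and, via the generating trivial cofibration straddling degrees $-1$ and $0$, an unbounded polynomial extension in degree $0$ as well — which destroys ``almost free'' (degreewise finiteness of the generators) and ``smooth and quasi-projective over $\F$'' (finite type). One needs instead a bounded construction exploiting the local presentation $A|_{\Spec R}\cong({\rm Sym}_{R}(V),d)$ of an object of $\sderca[\F]$, with $V$ a degreewise finite, locally free graded $R$-module in negative degrees, which enlarges $A$ one generator-degree at a time and adjoins at each step only the finitely many new generators forced in order to split the given map and keep $A\rightarrow E$ a quasi-isomorphism — the obstruction to passing from degree $-k$ to degree $-k-1$ being an explicit class killed by the finitely many generators adjoined there, precisely because $V$ is concentrated in negative degrees. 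These choices being natural in the morphism, and $E$ so produced being of finite type and almost free over a smooth base, one patches the construction over an affine cover of $A^{0}$ using the quasi-projectivity hypothesis. Once this finite, functorial factorization is in hand, the skeletal induction runs through formally, and (a) and (b) hold by inspection — (a) because each enlargement is a trivial cofibration, (b) by the shape of the factorization — so the entire weight of the theorem rests on the finiteness-controlled factorization lemma.
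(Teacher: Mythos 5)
Your overall architecture is the right one --- skeletal induction, adjoining acyclic pairs of generators, with degreewise finiteness as the crux --- but there are two genuine gaps. First, a structural one: at stage $n$ you factor $B\rightarrow R(B)_n\twoheadrightarrow M_nR(B)$, but the map that must be factored is the canonical map $\laspa{n}\rightarrow\maspa{n}$ from the \emph{latching} object to the matching object. Factoring out of $B$ alone does not define the degeneracies $R(B)_{n-1}\rightarrow R(B)_n$ (they are not ``evident''; they all factor through $\laspa{n}$), and it does not give the condition, needed for the resolution to actually compute mapping spaces downstream, that $\laspa{n}\rightarrow R(B)_n$ is a trivial cofibration in $\derca[\F]$. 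The paper builds $\ddga{n+1}$ explicitly as $\laspa{n+1}$ tensored with a free algebra on acyclic two-term complexes, so this condition holds by construction.

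Second, and more seriously, your reduction to ``a single factorization lemma'' --- a functorial factorization of an arbitrary morphism between objects of $\sderca[\F]$ as a quasi-isomorphism followed by a fibration, staying degreewise finitely generated --- is exactly the point where the difficulty lives, and your sketch does not resolve it. Adjoining ``only the finitely many new generators forced'' requires \emph{choosing} a finite generating set of a module of elements to be hit, and such choices cannot be made naturally in the morphism: a finite generating set of a finitely generated projective module is not a functorial datum, which is why the standard small-object factorization adjoins a generator for every element and leaves $\sderca[\F]$. The paper never proves (and does not need) a factorization lemma for arbitrary morphisms; it only factors the specific latching-to-matching maps arising in the induction, and there the new generator modules $\denbub[k]{n+1}$ are defined \emph{canonically} as pullbacks, over the $n$-faces of $\Delta_{n+1}$, of finitely generated projective $\dga[0]$-modules already constructed --- no generating sets are ever chosen. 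The bulk of the paper's proof is then the simplicial combinatorics (the direct-sum decompositions indexed by subsets of $\lf 0,\ldots,m\rf$) needed to verify that these pullbacks are finitely generated projective and that $\ddga{n+1}\rightarrow\maspa{n+1}$ is surjective in negative degrees; that verification, not the homotopy theory, is the content of the theorem, and it is absent from your proposal. Relatedly, your claim that the matching objects themselves stay in $\sderca[\F]$ because they are limits of geometric fibrations is not justified and is not what the paper uses: it only controls the generator modules, not $\maspa{n}$ itself.
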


The subtle point in this theorem is the functoriality of the resolution. The standard functorial factorization on $\derca[\F]$ uses free algebras generated by infinitely many elements, taking us out of $\sderca[\F]$. This subtlety makes the proof of this theorem very long, involving a lot of simplicial combinatorics.

\smallskip

Finally we address the comparison of the categories of pre-stacks and stacks valued in simplicial sets on the $3$ categories: $\op{\derca[\F]}$, $\Dmana\cong\op{\sderca[\F]}$ and $\Dman[\F]$. The fact that $\Dmana[\F]\hookrightarrow\op{\derca[\F]}$ and $\Dmana[\F]\hookrightarrow\Dman[\F]$ define homotopically full and faithful functors between simplicial localizations means that any $\dgs\in\Dmana[\F]$ defines the same pre-stack on $\Dmana[\F]$ whether computed within $\op{\derca[\F]}$ or within $\Dman[\F]$.

The essential image $\fderca[\F]\subset\derca[\F]$ of $\sderca[\F]$ consists of all $\dga$ s.t.\@ $\hogy\lp\dga\rp$ has finitely many generators in each degree. We observe that $\fderca[\F]\hookrightarrow\derca[\F]$ is a pseudo-model category as defined in \cite{HAG1}. Using the same étale topology as in \cite{ToVa} we obtain the categories of stacks $\stacks{\op{\fderca[\F]}}\simeq\stacks{\Dmana[\F]}$. This gives us half of the bridge we wanted. It remains to define stacks on $\Dman[\F]$ and connect them to stacks on $\Dmana[\F]$.

Étale coverings in $\op{\derca[\F]}$, and in $\Dmana[\F]$, are defined in terms of conditions on cohomology of dg algebras (we recall this definition in Section \ref{SectionStacks}). Essentially the same conditions give us a topology on $\Dman[\F]$. It is not surprising that we obtain the following

\begin{thm} (Thm.\@ \ref{QuillenEq}) The canonical adjunction between the categories of pre-stacks on $\Dmana[\F]$ and $\Dman[\F]$ defines a Quillen equivalence $\stacks{\Dmana[\F]}\simeq\stacks{\Dman[\F]}$.\footnote{Every quasi-isomorphism in $\Dman[\F]$ is locally (on the codomain) a weak equivalence, hence we believe that this theorem would remain valid, if we use quasi-isomorphisms instead of our weak equivalences.}\end{thm}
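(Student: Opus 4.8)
The plan is to show that the derived functors of the canonical adjunction are mutually inverse equivalences on homotopy categories. Write $i\col\Dmana[\F]\hookrightarrow\Dman[\F]$ for the inclusion of the full subcategory of affine dg manifolds. On pre-stacks (simplicial presheaves with the projective model structure) it induces the restriction $\restri\col\spr{\Dman[\F]}\to\spr{\Dmana[\F]}$, which is computed objectwise and hence preserves fibrations and weak equivalences; its left adjoint is the enriched left Kan extension $\restri_!$, and since $i$ is homotopically full and faithful by Theorem \ref{ManifoldsInclusion}, $\restri_!$ takes the presheaf represented by $V\in\Dmana[\F]$ to the presheaf represented by $V$ in $\Dman[\F]$, and $\restri$ takes the latter back to the former. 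First I would record that $(\restri_!,\restri)$ is a Quillen adjunction for the projective model structures and that it descends to the left Bousfield localizations that define $\stacks{\Dmana[\F]}$ and $\stacks{\Dman[\F]}$: the morphisms inverted over $\Dmana[\F]$ are generated, up to cofibrant replacement, by the augmentations $\hocolim[\Delta]\, U_\bullet\to V$ of hypercovers of affine dg manifolds, and because the $S$-topology on $\Dman[\F]$ restricts to the one on $\Dmana[\F]$ (cf.\@ Section \ref{SectionStacks}) while $\restri_!$ preserves homotopy colimits and representables, $\mathbb L\restri_!$ sends these to augmentations of hypercovers in $\Dman[\F]$, i.e.\@ to local equivalences. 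Hence one obtains a Quillen adjunction $\mathbb L\restri_!\col\stacks{\Dmana[\F]}\rightleftarrows\stacks{\Dman[\F]}\col\mathbb R\restri$.

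Next comes the one genuine geometric input: every dg manifold is covered by affine dg manifolds. If $X\in\Dman[\F]$, its degree-$0$ component is smooth and quasi-projective over $\F$, hence carries a Zariski cover by affine opens $U$; over each such $U$ the dg structure sheaf of $X$ is, by local almost freeness over the degree-$0$ part, the localization of an almost free algebra over a smooth $\F$-algebra, so it is an affine dg manifold, and the maps $U\to X$ are open immersions, hence fibrations (Definition \ref{DefFibrations}), hence covers. Composing with such covers, any covering family of a dg manifold refines to one consisting of affine dg manifolds; and, using that fiber products along fibrations exist in $\Dman[\F]$ and are again dg manifolds (Theorem \ref{FibrantObjects}), one builds for each $X\in\Dman[\F]$ an affine hypercover $U_\bullet\to X$ with all terms in $\Dmana[\F]$. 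Consequently a morphism of stacks on $\Dman[\F]$ is an equivalence as soon as it is one after evaluation on every affine dg manifold, and every stack on $\Dman[\F]$ is a homotopy colimit of presheaves represented by affine dg manifolds.

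With these facts in hand I would verify that the derived unit and counit of $(\mathbb L\restri_!,\mathbb R\restri)$ are weak equivalences. For the unit $G\to\mathbb R\restri\,\mathbb L\restri_!\, G$, take $G$ stack-fibrant over $\Dmana[\F]$; since presheaves represented by objects are tiny (evaluation commutes with homotopy colimits) and $\restri_!$ fixes representables, the homotopic full faithfulness of $i$ forces $(\restri_!\, G)(iV)\simeq G(V)$ for every $V\in\Dmana[\F]$, so $\restri_!\, G$ restricts to a stack on $\Dmana[\F]$; hence stackification does not alter its sections over affine dg manifolds, $(\mathbb R\restri\,\mathbb L\restri_!\, G)(V)\simeq G(V)$, and the unit is an objectwise — so a global — weak equivalence. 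For the counit $\mathbb L\restri_!\,\mathbb R\restri\, F\to F$ with $F$ a stack on $\Dman[\F]$: because $F$ satisfies descent for affine hypercovers, the left Kan extension of $F|_{\Dmana[\F]}$ again restricts to $F|_{\Dmana[\F]}$ and so has the correct sections over affine dg manifolds; stackifying does not change those, so the counit is an equivalence over every affine dg manifold, hence — equivalences of stacks on $\Dman[\F]$ being detected on affines — a weak equivalence. Thus $\mathbb L\restri_!$ is homotopically full, faithful and essentially surjective, and the adjunction is a Quillen equivalence.

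The step I expect to be the main obstacle is making rigorous the assertion that the $S$-topology on $\Dman[\F]$ is \emph{generated} by the affine dg manifolds — that every cover refines to an affine one and that stackification is built entirely from affine covers — since this is exactly what licenses both ``equivalences of stacks are detected on affine dg manifolds'' and ``stackification does not change the affine sections of a presheaf that restricts to a stack on $\Dmana[\F]$'', and the unit–counit argument rests on both. Running alongside it is the model-categorical bookkeeping: one needs the two Bousfield localizations to be left proper and combinatorial so that a Quillen adjunction inverting the localizing morphisms again localizes, and one needs the enriched left Kan extension $\restri_!$ to agree at the derived level with the functor seen through the hammock localizations, so that ``$\restri_!$ fixes representables'' is valid homotopically — here Theorems \ref{FibrantObjects} and \ref{ManifoldsInclusion} carry the weight.
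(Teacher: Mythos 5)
Your outline follows the paper's own route: the restriction/left Kan extension Quillen adjunction on projective prestack categories, descent to the left Bousfield localizations (the paper's Lemma \ref{PreservingLocal}), and the conclusion that the derived counit (and unit) are equivalences because stacks on $\Dman[\F]$ are determined by their affine sections. The genuine gap is the sentence in which you ``build for each $X\in\Dman[\F]$ an affine hypercover $U_\bullet\to X$'' and justify it only by the existence of fiber products along fibrations in $\Dman[\F]$ (Theorem \ref{FibrantObjects}). The hypercover condition is not a condition in the category $\Dman[\F]$; it is a condition in $\spr{\Dman[\F]}$, and it requires that the \emph{representable prestack} of the strict intersection, $\yo{U_i\underset{\dgs}\times U_j}$, compute the \emph{homotopy} fiber product $\yo{U_i}\underset{\yo{\dgs}}\times^h\yo{U_j}$, where the mapping spaces are those of the simplicial localization $\loca{\Dman[\F]}$ (nerves of cocycle categories). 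Knowing that the Čech nerve exists as a simplicial object of $\Dmana[\F]$ does not give this; one must relate strict pullbacks of objects to homotopy pullbacks of their images under the Yoneda embedding into $\spr{\loca{\Dman[\F]}}$. In the paper this is exactly Proposition \ref{TwoIntersections}, and its proof rests on Lemma \ref{ConnectedComp}: for an open dg subscheme $U\subseteq\dgs$ and any $\dgs'$, the map $\mapis[{\loca{\Dman[\F]}}]\lp\dgs',U\rp\rightarrow\mapis[{\loca{\Dman[\F]}}]\lp\dgs',\dgs\rp$ is, up to weak equivalence, the inclusion of a union of connected components (a cocycle lands in the subset according to whether $\hofu[0]$ of its apex maps into $U$). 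That is what makes the homotopy pullback of mapping spaces computable and identifies it with the cocycle space into the strict fiber product.

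This is not a peripheral detail you can wave at: both halves of your endgame --- that $\mathbb L\restri_!$ carries local equivalences to local equivalences, and that equivalences of stacks on $\Dman[\F]$ are detected on affine dg manifolds --- rest on the Čech nerve of an affine atlas being an honest hypercover of $\yo{\dgs}$ in the prestack category, which is the paper's Theorem \ref{CechCovers} and is deduced directly from Proposition \ref{TwoIntersections}. You correctly flagged ``the topology on $\Dman[\F]$ is generated by affines'' as the main obstacle, but the concrete missing ingredient is the compatibility of open immersions with mapping spaces in the simplicial localization, i.e.\@ Lemma \ref{ConnectedComp}. The remainder of your argument (the projective-level Quillen adjunction, descent to the localizations via hypercover augmentations, and the unit/counit computation using that $\restri_!$ fixes representables thanks to Theorem \ref{ManifoldsInclusion}) matches the paper's Lemma \ref{PreservingLocal} and the proof of Theorem \ref{QuillenEq}, and in places is spelled out in more detail than the paper's rather terse treatment of the derived unit.
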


This theorem is not surprising because every dg manifold has an atlas of affine dg manifolds. To actually prove this theorem we need to show that such affine atlases define hypercovers in $\Dman[\F]$. We do this in Thm.\@ \ref{CechCovers}. Here we use a very nice feature of the mapping spaces in $\Dman[\F]$: let $\dgs\in\Dman[\F]$ and let $U\subseteq\dgs$ be an open dg subscheme, then for any $\dgs'\in\Dman[\F]$ the image of $\mapis[{\Dman[\F]}]\lp\dgs',U\rp\rightarrow\mapis[{\Dman[\F]}]\lp\dgs',\dgs\rp$ is weakly equivalent to a union of connected components (Lemma \ref{ConnectedComp}). This implies that the pre-stack $\yo{\dgs'\underset{\dgs}\times U}$ represented by $\dgs'\underset{\dgs}\times U$ is weakly equivalent to the homotopy pullback $\yo{\dgs'}\underset{\yo{\dgs}}\times^{h}\yo{U}$ (Prop.\@ \ref{TwoIntersections}). Then we obtain

\begin{thm} (Thm.\@ \ref{ManSche}) Let $\dgs\in\Dman[\F]$, and let $\yosh{\dgs}$ be the corresponding stack on $\Dmana[\F]$. Then $\yosh{\dgs}$ is a derived scheme as defined in \cite{To14}.\end{thm}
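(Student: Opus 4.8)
The plan is to show that $\yosh{\dgs}$, the stack on $\Dmana[\F]$ represented by a dg manifold $\dgs$, satisfies the two defining properties of a derived scheme in the sense of \cite{To14}: first, that its truncation $\pi_0\lp\yosh{\dgs}\rp$ is a classical scheme, and second, that $\yosh{\dgs}$ admits an open covering by affine derived schemes (i.e.\@ representables $\spec{\dga}$ with $\dga\in\sderca[\F]$), where "open" is understood in the sense of open immersions of derived stacks. First I would unwind the construction: $\dgs$ is, by definition of dg manifold, a classical quasi-projective smooth scheme $\dgs^0$ together with a sheaf of non-positively graded dg $\streaf{\dgs^0}$-algebras that is locally almost free over $\streaf{\dgs^0}$. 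Its truncation is just $\dgs^0$ with the structure sheaf $\ho[0]$ of its dg structure sheaf, which is a closed subscheme of the smooth scheme $\dgs^0$, hence a classical scheme; so the truncation condition is immediate.

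For the covering condition I would use the affine atlas that every dg manifold carries: Zariski-locally on $\dgs^0$ the dg structure sheaf is (the localization of) an almost free dg algebra $\dga$ over a smooth affine $\dga[0]$, which is exactly an object $\spec{\dga}\in\Dmana[\F]$. This gives a family of morphisms $\spec{\dga_i}\hookrightarrow\dgs$ in $\Dman[\F]$ covering $\dgs$. The crucial inputs are the results already established in the excerpt: by Thm.\@ \ref{ManifoldsInclusion} and Thm.\@ \ref{AlgebrasInclusion} the composite $\Dmana[\F]\hookrightarrow\Dman[\F]$ and $\sderca[\F]\hookrightarrow\derca[\F]$ are homotopically full and faithful, so the represented pre-stack $\yosh{\dgs}$ does not depend on whether mapping spaces are computed in $\Dman[\F]$, in $\Dmana[\F]$, or in $\op{\derca[\F]}$; and by Thm.\@ \ref{QuillenEq} together with Thm.\@ \ref{CechCovers} the affine atlas of $\dgs$ is a hypercover in $\Dman[\F]$, so $\yosh{\dgs}$ is genuinely the homotopy colimit of the Čech nerve of $\coprod_i \spec{\dga_i}\to\dgs$. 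Then I would invoke Prop.\@ \ref{TwoIntersections} / Lemma \ref{ConnectedComp}: for open dg subschemes the represented pre-stack of a fibre product $\spec{\dga_i}\underset{\dgs}\times\spec{\dga_j}$ agrees with the homotopy fibre product $\yosh{\spec{\dga_i}}\underset{\yosh{\dgs}}\times^h\yosh{\spec{\dga_j}}$, and moreover the map on mapping spaces into an open subscheme is a union of connected components. This last point is precisely what guarantees that each $\yosh{\spec{\dga_i}}\to\yosh{\dgs}$ is a \emph{monomorphism} of stacks (an injection on $\pi_0$ of all mapping spaces and an isomorphism on higher homotopy), which combined with flatness/étale-locality is the condition for an open immersion of derived stacks.

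The main obstacle, I expect, is verifying that these monomorphisms $\yosh{\spec{\dga_i}}\hookrightarrow\yosh{\dgs}$ are indeed \emph{open immersions} in the technical sense of \cite{To14}, namely that they are flat (even étale) monomorphisms, or equivalently that on truncations one gets an open immersion of classical schemes and the map is "strong"/formally étale in the derived directions. Concretely one must check that for a test affine $\spec{\ddgb{}}\to\yosh{\dgs}$ the base change $\spec{\ddgb{}}\underset{\yosh{\dgs}}\times^h\yosh{\spec{\dga_i}}$ is again a representable open subfunctor of $\spec{\ddgb{}}$ — this is where the "union of connected components" statement of Lemma \ref{ConnectedComp} does the heavy lifting, since on $\pi_0$ it forces the fibre product to land in an open subset of $\Spec\lp\ho[0]\lp\ddgb{}\rp\rp$, and the derived structure is then dragged along unchanged because opens in $\dgs^0$ pull back the full dg structure sheaf by restriction. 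I would organize the argument as: (i) reduce, via Thm.\@ \ref{QuillenEq}, to checking the derived-scheme axioms for $\yosh{\dgs}$ as a stack on $\Dmana[\F]$; (ii) produce the affine cover from the atlas of $\dgs$; (iii) use Thm.\@ \ref{CechCovers} to know it is a hypercover so that $\yosh{\dgs}=\hocolim[] (\text{Čech nerve})$; (iv) use Prop.\@ \ref{TwoIntersections} and Lemma \ref{ConnectedComp} to identify the nerve terms with homotopy fibre products and to show each leg is an open immersion; (v) conclude by the gluing/descent characterization of derived schemes in \cite{To14}. Steps (i)--(iii) are essentially bookkeeping given the earlier theorems; step (iv), and within it the openness, is the real content, and I would spend most of the write-up there.
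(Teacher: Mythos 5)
Your proposal is correct and follows essentially the same route as the paper: the paper's proof takes a finite affine open atlas, invokes Theorem \ref{CechCovers} to get a hypercover (hence an epimorphism from the coproduct of the affine charts), and uses Proposition \ref{TwoIntersections} together with the fact that the associated stack functor preserves homotopy fiber products to conclude that each $\yo{\dgs_i}\rightarrow\yosh{\dgs}$ is a Zariski open immersion. Your extra discussion of the truncation and of why Lemma \ref{ConnectedComp} yields a monomorphism is a sound elaboration of what the paper leaves implicit, but it is not a different argument.
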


So, if we limit ourselves to dg algebras with cohomologies having finitely many generators in each degree, we find that the corresponding infinity category of stacks is equivalent to the infinity category of stacks on affine dg manifolds. When we include all dg manifolds, we still obtain an equivalent infinity category. 

Thus dg manifolds play the same role in derived geometry as quasi-projective schemes in the usual geometry: they define the same topos, but give a larger supply of explicitly definable objects, in particular the ones that are proper over $\F$.

\medskip

Now we turn to part (\ref{SecondStep}) of our program. The problem of constructing dg Quot-schemes was addressed in \cite{DerQuot}. The approach there was as follows: given a smooth projective scheme $\lp\sche,\twish[1]{\sche}\rp$ view the category of coherent sheaves on $\sche$ as the category of saturated graded $\gect{*}\lp\sche,\twish[1]{\sche}\rp$-modules and use the fact that the category of all $\gect{*}\lp\sche,\twish[1]{\sche}\rp$-modules has enough projective objects. Given a Hilbert polynomial $\hilpo{}$ realize the corresponding Quot-scheme as a closed subscheme of a finite product of Grassmannians and enhance this embedding to a structure of a dg manifold.

The main theorem in \cite{DerQuot} was a certain stabilization result: instead of taking the category of all $\gect{*}\lp\sche,\twish[1]{\sche}\rp$-modules one can work with a large enough truncation, i.e.\@ graded modules that become trivial in degrees of homogeneity larger than a fixed bound. Letting this bound go to $\infty$ produces in the limit the $\rhom$ complexes of the category of coherent sheaves on $\sche$. The main theorem of \cite{DerQuot} states that this limit is achieved at a finite bound. This would allow us to construct a dg manifold enhancement of the Quot-scheme within a finite product of Grassmannians, and this enhancement would have the correct tangent complexes.

While this paper was under review the authors of \cite{DerQuot} have alerted us to the fact that the main theorem of \cite{DerQuot} is wrong. Thus there is no dg Quot-scheme construction achieved in \cite{DerQuot}. There are still many dg manifolds constructed there, one for each bound. They have wrong tangent complexes, but the discrepancy appears in higher and higher cohomology degrees, as the bound tends to $\infty$. In addition the dg manifold at each bound projects canonically onto each one corresponding to a lower bound, i.e.\@ we have a projective system of dg manifolds. In a subsequent paper we will show that the limit of the corresponding projective system of stacks on $\Dman[\F]$ is a derived scheme and it is represented by a dg manifold. This will complete step (\ref{SecondStep}) of our program.

\subsection*{Acknowledgements} The first author would like to thank Tony Pantev and Dingxin Zhang for very helpful conversations. The third author would like to thank Dennis Gaitsgory, Tony Pantev, Fedor Bogomolov, Roman Bezrukovnikov and Vladimir Baranovsky, Sergey Arkhipov and Alina Marian for helpful discussions and commenting on the first versions of this article. Research of A.S. and D. B. was partially supported by generous Aarhus startup research grant of A. S., and partially by the NSF DMS-1607871, NSF DMS-1306313, the Simons 38558, and Laboratory of Mirror Symmetry NRU HSE, RF Government grant, ag. No 14.641.31.0001. A.S. would like to sincerely thank Professor Shing-Tung Yau for always being there to provide crucial help, kind advice and generous support. He would further like to sincerely thank the Center for Mathematical Sciences and Applications at Harvard University, the center for Quantum Geometry of Moduli Spaces at Aarhus University, and the Laboratory of Mirror Symmetry in Higher School of Economics, Russian federation for support. 

\medskip

\includegraphics[scale=0.2]{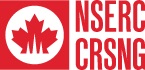} The first author acknowledges the support of the Natural Sciences and Engineering Research Council of Canada (NSERC), [RGPIN-2020-04845].

Cette recherche a été financée par le Conseil de recherches en sciences naturelles et en génie du Canada (CRSNG), [RGPIN-2020-04845].

\subsection*{Notation and conventions}

Unless stated otherwise all of our constructions happen over a fixed \ee{ground field} \ce{$\F$} of characteristic $0$. When dealing with $\F$-algebras we do not require $1\neq 0$, and hence allow $\{0\}$ as an $\F$-algebra. We do require morphisms of unital algebras to preserve the unit. We will not include $\F$ in the notation, but assume all of our algebras to be $\F$-algebras. Unless specified otherwise all tensor products are over $\F$. The category of differential non-positively graded $\F$-algebras will be denoted by \ce{$\derca[\F]$}.

\smallskip

This paper is concerned with comparing the theories of derived algebraic geometry over $\F$ developed by Ciocan-Fontanine -- Kapranov in \cite{DerQuot} and by Toën -- Vezzosi in \cite{HAG1}, \cite{HAG2}. Correspondingly we have tried to separate the terminology by calling the former constructions \ee{differential graded} and the latter \ee{derived}. E.g.\@ \ee{an affine derived scheme} is a derived scheme corresponding to a simplicial algebra as considered in \cite{To10}, while \ee{a dg scheme} is a dg enhancement of a usual scheme as defined in \cite{DerQuot}.

We use the \ee{cohomological notation}, i.e.\@ our differentials raise the degree by $1$. For a dg space \ce{$\dga$} we write \ce{$\dga[*]$} to denote \ee{the underlying graded space}, i.e.\@ with the differential forgotten. We say that $\ddga{}\rightarrow\ddgb{}$  is \ee{an almost free morphism of dg algebras}, if the underlying $\ddga[*]{}\rightarrow\ddgb[*]{}$ is free, i.e.\@ $\ddgb[*]{}$ is a free graded $\ddga[*]{}$-algebra.

For a dg algebra $\dga$ we use the term \ee{cotangent complex} to mean the homotopically correct cotangent complex of $\dga$, i.e.\@ the derived functor of the functor of usual \ee{Kähler differentials}. We denote the latter by \ce{$\kae[]$}.

When dealing with dg schemes we have two associated usual schemes: \ee{the underlying classical scheme}, i.e.\@ the closed subscheme given as cohomology in degree $0$, and \ee{the ambient classical scheme}, i.e.\@ the degree $0$ part of the dg scheme. We use the following notation: a dg scheme will be denoted by \ce{$\dgs$}, it consists of the ambient classical scheme \ce{$\lp\cs,\streaf{\cs}\rp$} and an extra sheaf of dg $\streaf{\cs}$-algebras \ce{$\streaf[\bullet]{\dgs}$}. The underlying classical scheme is \ce{$\lp\hofu[0]\lp\dgs\rp,\streaf{\hofu[0]\lp\dgs\rp}\rp$}.

\smallskip

We make use of a lot of \ee{homotopy colimits} (mostly of simplicial sets). There are two common approaches: as  derived functors of colimits and the Bousfield--Kan formula developed in \cite{BKan}. As shown in \cite{Gambino} these two approaches produce (weakly) equivalent results, so we use them interchangeably.

The category of ordered finite ordinals and weakly order preserving maps will be denoted by \ce{$\Delta$}. The \ee{category of simplicial sets} is denoted by \ce{$\sset$}. We will use categories enriched in $\sset$ calling them \ee{$\sset$-categories}. We will be especially interested in \ee{simplicial localizations} of usual categories obtained by taking the derived functor of inverting a set of morphisms. Given a category $\mathcal C$ with a chosen class of weak equivalences, its simplicial localization will be denoted by \ce{$\loca{\mathcal C}$}. \ee{The nerve of $\mathcal C$} will be denoted by \ce{$\nerve{\mathcal C}$}.

We will be working a lot with presheaves valued in $\sset$, and we will use hats to denote \ee{representable presheaves}.

\section{Dg manifolds}\label{DGMan}
\begin{definition} (\cite{DerQuot} \S2.2) \ee{A dg scheme} {$\dgs$} over $\F$ is given by a scheme {$\lp\cs,\streaf{\cs}\rp$} over $\F$ together with a sheaf $\streaf[\bullet]{\dgs}=\lp\streaf[*]{\dgs},\homdi\rp$ of differential $\nopos$-graded $\F$-algebras on $\cs$ and an isomorphism $\streaf{\cs}\cong\streaf[0]{\dgs}$, s.t.\@ $\forall k<0$ $\streaf[k]{\dgs}$ is a quasi-coherent sheaf of $\streaf[0]{\dgs}$-modules. We will call $\lp\cs,\streaf[0]{\cs}\rp$ \ee{the ambient classical scheme} for $\dgs$.
	
\ee{A morphism of dg schemes} $\dgs_1\rightarrow\dgs_2$ is given by a morphism of dg ringed spaces $\lp\Phi,\smor[\bullet]\rp\colon\lp\cs_1,\streaf[\bullet]{\dgs_1}\rp\rightarrow\lp\cs_2,\streaf[\bullet]{\dgs_2}\rp$
that restricts to a morphism of schemes $\lp\Phi,\smor[0]\rp\colon\lp\cs_1,\streaf[0]{\dgs_1}\rp\rightarrow\lp\cs_2,\streaf[0]{\dgs_2}\rp$. The category of dg schemes over $\F$ will be denoted by {$\Dgs[\F]$}.
	
\smallskip
	
The \ee{graded scheme $\lp\hofu[0]\lp\dgs\rp,\streaf[*]{\hofu\lp\dgs\rp}\rp$} associated to a dg scheme $\dgs$ consists of the closed subscheme $\iota\colon\lp\hofu[0]\lp\dgs\rp,\streaf[0]{\hofu\lp\dgs\rp}\rp\subseteq\lp\cs,\streaf{\cs}\rp$ defined by the sheaf of ideals $\homdi\lp\streaf[-1]{\dgs}\rp\subseteq\streaf[0]{\dgs}$, together with the sheaf $\streaf[*]{\hofu\lp\dgs\rp}:=\iota^*\lp\hosh[*]\lp\streaf[\bullet]{\dgs}\rp\rp$ of $\nopos$-graded $\F$-algebras where $\hosh[*]$ denotes the sheaf of cohomologies of $\streaf[\bullet]{\dgs}$. We will call $\lp\hofu[0]\lp\dgs\rp,\streaf[0]{\hofu\lp\dgs\rp}\rp$ \ee{the underlying classical scheme} for $\dgs$. 
	
Denoting by $\Gs[\F]\subset\Dgs[\F]$ the full subcategory consisting of {graded schemes} (i.e.\@ dg schemes with vanishing differential), we obtain a functor 
	\begin{eqn}\label{HomologyScheme}\hofu\colon\Dgs[\F]\longrightarrow\Gs[\F],\quad
		\dgs\longmapsto\lp\hofu[0]\lp\dgs\rp,\streaf[*]{\hofu\lp\dgs\rp}\rp.\n\end{eqn}
A morphism $\lp\Phi,\smor[\bullet]\rp$ in $\Dgs[\F]$ is \ee{a quasi-isomorphism}, if $\hofu\lp\Phi,\smor[\bullet]\rp$ is an isomorphism.\end{definition}
To be able to do homotopical algebra effectively we put some smoothness conditions on our dg schemes and require the ambient classical schemes to be quasi-projective. Our smoothness requirements are not invariant with respect to quasi-isomorphisms. They are similar to the conditions one puts on fibrant objects in homotopical algebra.

\begin{definition} A dg scheme $\dgs\in\Dgs[\F]$ is \ee{a dg manifold}, if {$\lp\cs,\streaf{\cs}\rp$} is a smooth quasi-projective scheme over $\F$, and for any affine open $U\subseteq \cs$ the sheaf $\streaf[*]{\dgs}\big\vert_U$ of $\nopos$-graded $\streaf{U}$-algebras is freely generated by a sequence of locally free coherent $\streaf{U}$-modules (one in each non-positive degree).\footnote{A choice of these generating locally free sheaves is \e{not} part of the structure of a dg manifold.} We denote by $\Dman[\F]\subset\Dgs[\F]$ the full subcategory consisting of dg manifolds.
	
\smallskip
	
Let $\derca[\F]$ be the category of differential $\nopos$-graded $\F$-algebras. The \ee{spectrum of $\dga\in\derca[\F]$} is the dg scheme {$\spec{\dga}$} having $\spe{{\dga[0]}}$ as the ambient classical scheme and $\streaf[\bullet]{\spec{\dga}}$ being given by localization of the dg $\dga[0]$-algebra $\dga$. Let $\sderca[\F]\subset\derca[\F]$ be the full subcategory consisting of $\dga$, s.t.\@ $\dga[0]$ is of finite type and smooth over $\F$, and the $\nopos$-graded $\dga[0]$-algebra $\dga[*]$ is freely generated by a sequence of finitely generated projective $\dga[0]$-modules (one in each non-positive degree). Restricting $\spec{-}$ to $\sderca[\F]$ we obtain the full subcategory $\Dmana[\F]\subset\Dman[\F]$ of \ee{affine dg  manifolds}.\end{definition}

\begin{remark} Since on affine schemes locally free coherent sheaves are projective objects, the property of being a dg manifold is local, i.e.\@ $\dgs\in\Dman[\F]$ if $\cs$ is quasi-projective over $\F$ and there is some open affine atlas $\cs=\underset{i\in I}\bigcup\,U_i$ s.t.\@ each $\lp U_i,\streaf[\bullet]{\dgs}\big\vert_{U_i}\rp$ is an affine dg manifold.
	
Clearly $\dgs\in\Dman[\F]$ is in $\Dmana[\F]$ if and only if $\cs$ is an affine scheme.\end{remark}
The advantage of $\Dman[\F]$ over $\Dgs[\F]$ is that we can view $\Dman[\F]$ as a category of fibrant objects with respect to a suitable notion of fibrations and weak equivalences.

\begin{definition}\label{DefFibrations} A $\lp\Phi,\smor[\bullet]\rp\colon\dgs_1\rightarrow\dgs_2$ in $\Dgs[\F]$ is a \ee{fibration} if:\begin{enumerate}
		\item The morphism $\lp\Phi,\smor[0]\rp\colon\lp\cs_1,\streaf{\cs_1}\rp\rightarrow\lp\cs_2,\streaf{\cs_2}\rp$ is smooth.
		\item For any $\pt\in\cs_1$ there are open affine subschemes $\pt\in U_1\subseteq\cs_1$ and $\Phi(\pt)\in U_2\subseteq\cs_2$ s.t.\@ denoting $\dgal{1}:=\Gamma\lp U_1, \streaf[\bullet]{\dgs_1}\rp$, $\dgal{2}:=\Gamma\lp U_2, \streaf[\bullet]{\dgs_2}\rp$ we have $\dgal{2}\underset{\dgal[0]{2}}\otimes\dgal[0]{1}\rightarrow\dgal{1}$ being \ee{almost free}, i.e.\@ 
		\begin{eqn}\dgal[*]{1}\cong\lp\dgal[*]{2}\underset{\dgal[0]{2}}\otimes\dgal[0]{1}\rp
			\underset{\dgal[0]{1}}\otimes\dga[*],\end{eqn}
		where $\dga[*]$ is a free $\nopos$-graded $\dgal[0]{1}$-algebra generated by a sequence of finitely generated projective $\dgal[0]{1}$-modules.\end{enumerate}\end{definition}

\begin{remark} It is obvious that $\dgs\in\Dgs[\F]$ is in $\Dman[\F]$, if and only if $\cs$ is quasi-projective over $\F$ and $\dgs\rightarrow\spec{\F}$ is a fibration.
	
It is also clear that $\spec{\dgal{1}}\rightarrow\spec{\dgal{2}}$ in $\Dmana[\F]$ is a fibration, if and only if $\dgal[0]{2}\rightarrow\dgal[0]{1}$ is smooth and there are finitely generated projective $\dgal[0]{1}$-, $\dgal[0]{2}$-modules $\lf\genbu[k]{1}\rf_{k<0}$, $\lf\genbu[k]{2}\rf_{k<0}$ that freely generate $\dgal[*]{1}$, $\dgal[*]{2}$ over $\dgal[0]{1}$ and $\dgal[0]{2}$ respectively, and s.t.\@ $\dgal[*]{2}\underset{\dgal[0]{2}}\otimes\dgal[0]{1}\rightarrow\dgal[*]{1}$ is given by a sequence of inclusions $\lf\genbu[k]{2}\underset{\dgal[0]{2}}\otimes\dgal[0]{1}\hooklongrightarrow\genbu[k]{1}\rf_{k<0}$ with the quotients being projective $\dgal[0]{1}$-modules. In particular $\dgal{2}\underset{\dgal[0]{2}}\otimes\dgal[0]{1}\rightarrow\dgal{1}$ is almost free.
	
\hide{Indeed, since $\dgal[*]{1}$ is freely generated over $\F$ by a sequence $\lf\w{\genbu[k]{1}}\rf_{k<0}$ of finitely generated projective $\dgal[0]{1}$-modules, we have $\dgal[<0]{1}\cong\underset{k<0}\bigoplus\lp\mathfrak I^k\oplus\w{\genbu[k]{1}}\rp$, where $\mathfrak I^k$ is the degree $k$ component of $\lp\dgal[<0]{1}\rp^2$. Let $\lf\genbu[k]{2}\rf_{k<0}$ be a sequence of finitely generated projective $\dgal[0]{2}$-modules that freely generate $\dgal[*]{2}$ over $\F$. Composing $\dgal{2}\underset{\dgal[0]{2}}\otimes\dgal[0]{1}\rightarrow\dgal{1}$ with the projections onto $\lf\w{\genbu[k]{1}}\rf_{k<0}$ we obtain a sequence of bundle maps $\lf\genbu[k]{2}\underset{\dgal[0]{2}}\otimes\dgal[0]{1}\rightarrow\w{\genbu[k]{1}}\rf_{k<0}$. Since $\dgal{2}\rightarrow\dgal{1}$ corresponds to a fibration, locally these bundle maps are inclusions with locally free quotients, therefore also globally these are inclusions of bundles with the quotients being locally free. For each $k<0$ we define $\genbu[k]{1}\subseteq\dgal[k]{1}$ to be the sum of the image of $\genbu[k]{2}\underset{\dgal[0]{2}}\otimes\dgal[0]{1}$ in $\dgal[k]{1}$ and a complement of the image of $\genbu[k]{2}\underset{\dgal[0]{2}}\otimes\dgal[0]{1}$ in $\w{\genbu[k]{1}}$. This sum is clearly direct, i.e.\@ each $\genbu[k]{1}$ is a bundle of the same rank as $\w{\genbu[k]{1}}$, and, working locally, one sees that $\dgal[*]{1}$ is freely generated over $\F$ by $\lf\genbu[k]{1}\rf_{k<0}$.}\end{remark}

\begin{proposition}\label{WrongCatF} Pullbacks of fibrations in $\Dman[\F]$ exist, and are fibrations themselves. If, in addition, the fibrations are quasi-isomorphisms, their pullbacks are quasi-isomorphisms as well.\end{proposition}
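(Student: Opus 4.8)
The plan is to reduce everything to the affine case, where the statement becomes a concrete assertion about differential non-positively graded algebras. Since being a dg manifold and being a fibration are both local properties (on the ambient classical schemes), and since limits of schemes are computed by gluing affine pieces, it suffices to construct the pullback affine-locally and check it has the required properties. So let $\lp\Phi,\smor[\bullet]\rp\col\dgs_1\rightarrow\dgs_2$ be a fibration and let $\dgs_3\rightarrow\dgs_2$ be an arbitrary morphism in $\Dman[\F]$. First I would pass to open affine charts: pick $\pt\in\cs_3$, its image in $\cs_2$, and an affine open $U_2$ over which the fibration condition of Definition \ref{DefFibrations} holds; shrink $\cs_3$ to an affine $U_3$ mapping into $U_2$, and shrink $\cs_1$ to an affine $U_1$ mapping into $U_2$ as in the definition. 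Writing $\dgal{i}:=\Gamma\lp U_i,\streaf[\bullet]{\dgs_i}\rp$, we have $\dgal[*]{1}\cong\lp\dgal[*]{2}\underset{\dgal[0]{2}}\otimes\dgal[0]{1}\rp\underset{\dgal[0]{1}}\otimes\dga[*]$ with $\dga[*]$ free on finitely generated projective $\dgal[0]{1}$-modules, and $\dgal[0]{2}\rightarrow\dgal[0]{1}$ smooth.

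The candidate pullback is $\spec{\dgal{1}\underset{\dgal{2}}\otimes\dgal{3}}$, and the first task is to see this lies in $\Dmana[\F]$. In degree $0$ we get $\dgal[0]{1}\underset{\dgal[0]{2}}\otimes\dgal[0]{3}$, which is smooth and of finite type over $\dgal[0]{3}$ (hence over $\F$) since $\dgal[0]{2}\rightarrow\dgal[0]{1}$ is. For the graded structure: because $\dgal[*]{2}\rightarrow\dgal[*]{1}$ is almost free, base change along $\dgal{2}\rightarrow\dgal{3}$ is computed by an honest (underived) tensor product on the underlying graded algebras, so $\lp\dgal{1}\underset{\dgal{2}}\otimes\dgal{3}\rp^*\cong \dgal[*]{3}\underset{\dgal[0]{2}\otimes\dgal[0]{3}\text{-stuff}}\otimes\lp\text{free on pulled-back projectives}\rp$ — more precisely it is $\dgal[*]{3}$ with extra free generators given by the pullbacks of the generating projective $\dgal[0]{1}$-modules of $\dga[*]$ along $\dgal[0]{1}\rightarrow\dgal[0]{1}\underset{\dgal[0]{2}}\otimes\dgal[0]{3}$. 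These remain finitely generated projective, so the result is freely generated in each negative degree by finitely generated projectives over its smooth degree-$0$ part, i.e.\@ it is in $\sderca[\F]$. The same computation shows the projection to $\spec{\dgal{3}}$ is almost free in negative degrees with projective quotients and smooth in degree $0$, hence a fibration by the explicit criterion in the Remark following Definition \ref{DefFibrations}. Gluing these affine pullbacks over a cover of $\cs_3$ (they agree on overlaps by the universal property), and using quasi-projectivity of $\cs_3$ to know the glued object is again a scheme of the required type, produces the pullback in $\Dman[\F]$.

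For the last sentence, suppose in addition the fibration is a quasi-isomorphism. I would check this affine-locally as well: $\hofu$ being an isomorphism means the map of degree-$0$ cohomology schemes is an iso and the pulled-back cohomology sheaves match. Since $\dgal[*]{2}\rightarrow\dgal[*]{1}$ is almost free with the extra generators forming a sequence of acyclic two-term complexes (this is exactly what a trivial fibration forces locally — the added generating bundles come in contractible pairs), base-changing along $\dgal{2}\rightarrow\dgal{3}$ preserves this structure, so $\dgal{3}\rightarrow\dgal{1}\underset{\dgal{2}}\otimes\dgal{3}$ is again almost free with contractible added generators, hence a quasi-isomorphism. I expect the main obstacle to be the bookkeeping in this last point: one must argue that a trivial fibration, locally, adds generators only in \emph{contractible} pairs (equivalently, that the relative cotangent complex is acyclic), and that this property is visibly stable under the underived base change that computes the pullback; making the "contractible pairs" statement precise — rather than hand-waving from "$\hofu$ is an iso" — and keeping track of it through the tensor product is where the real work sits. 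The commuting of base change with $\hofu$ is itself only legitimate because of almost-freeness (so no higher $\mathrm{Tor}$ appears), and that observation should be stated cleanly before it is used.
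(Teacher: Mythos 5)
Your first half (existence of the pullback and stability of fibrations) is essentially the paper's argument: reduce to the affine case, observe that pushouts of almost free morphisms are again almost free, and use stability of quasi-projectivity under fiber products. That part is fine. The gap is in the second half. Your key structural claim --- that a fibration which is a quasi-isomorphism is locally of the form $\dgal{2}\underset{\dgal[0]{2}}\otimes\dgal[0]{1}\rightarrow\lp\dgal{2}\underset{\dgal[0]{2}}\otimes\dgal[0]{1}\rp\underset{\dgal[0]{1}}\otimes\fral{\dgal[0]{1}}{\text{acyclic two-term complexes}}$ --- is not merely the hard bookkeeping you defer; it is false. In Definition \ref{DefFibrations} the new generators of the almost free extension sit only in degrees $\leq -1$, while the degree-$0$ part of a fibration is merely \emph{smooth}, so a degree $-1$ generator whose differential is a nonzero element of $\dgal[0]{1}$ has nothing to pair with. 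Concretely: take $\dgal{2}=\F$, let $\dgal[0]{1}=\F\lb x,x^{-1}\rb$, and adjoin a single free generator $\xi$ in degree $-1$ with $\delta\xi=x-1$. This is a fibration and a quasi-isomorphism ($\ho[0]=\F$, $\ho[-1]=0$ since $\F\lb x,x^{-1}\rb$ is a domain), but any algebra of the form $\fral{\F}{\denbu[-1]{}\rightarrow\genbu[0]{}}$ has a polynomial ring in degree $0$, which $\F\lb x,x^{-1}\rb$ is not. So there is no ``contractible pairs'' decomposition to base-change, and your parenthetical ``(equivalently, that the relative cotangent complex is acyclic)'' conflates a true, strictly weaker condition with a false, stronger one.

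The acyclicity of the relative cotangent complex is not a reformulation to be gestured at --- it is the actual proof, and it requires no structural decomposition of the trivial fibration. A morphism in $\sderca[\F]$ is a quasi-isomorphism if and only if it induces an isomorphism on $\ho[0]$ and its relative cotangent complex is acyclic. The $\ho[0]$ part passes to the pullback because $\dgs\mapsto\hofu[0]\lp\dgs\rp$ has a left adjoint and hence preserves fiber products (a point your sketch does not address). For the cotangent complex: since the morphisms in question are fibrations, the induced maps on Kähler differentials are cofibrations between cofibrant complexes of modules, so the ordinary relative Kähler differentials already compute the relative cotangent complexes; these commute with the (underived, by almost-freeness) base change, so acyclicity for the original fibration transfers to its pullback. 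Replacing your ``contractible pairs'' step with this argument closes the gap.
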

\begin{proof} A standard argument shows that $\Dgs[\F]$ has all fiber products (e.g.\@ \cite{Hartshorne} \S II.3), so we only need to show that a pullback of a fibration in $\Dman[\F]$ is a fibration itself and lies in $\Dman[\F]$. To show that it is a fibration it is enough to work in $\Dmana[\F]$, since the property of being a fibration is local on the domain. Let $\dgal{1}\rightarrow\dgal{2}$, $\dgal{1}\rightarrow\dgal{3}$ be morphisms in $\sderca[\F]$, s.t.\@ $\dgal{1}\rightarrow\dgal{2}$ corresponds to a fibration, and let $\dgal{4}:=\dgal{3}\underset{\dgal{1}}\otimes\dgal{2}$. Clearly $\dgal[0]{3}\rightarrow\dgal[0]{4}$ is smooth and
	\begin{eqn}\dgal{4}\cong\lp\dgal{3}\underset{\dgal[0]{3}}\otimes\dgal[0]{4}\rp\underset{\dgal{1}\underset{\dgal[0]{1}}\otimes\dgal[0]{4}}\otimes
		\lp\dgal{2}\underset{\dgal[0]{2}}\otimes\dgal[0]{4}\rp.\end{eqn}
By assumption $\dgal{1}\underset{\dgal[0]{1}}\otimes\dgal[0]{2}\rightarrow\dgal{2}$ is almost free, therefore, since pushouts of almost free morphisms are themselves almost free, we have that $\dgal{3}\underset{\dgal[0]{3}}\otimes\dgal[0]{4}\rightarrow\dgal{4}$ is almost free, i.e.\@ $\dgal{3}\rightarrow\dgal{4}$ corresponds to a fibration. 
	
Since fiber products of quasi-projective schemes are quasi-projective, we conclude that a pullback of a fibration in $\Dman[\F]$ is itself in $\Dman[\F]$.

\smallskip

Now we consider pullbacks of fibrations that are simultaneously quasi-isomorphisms. Let $\dgs_2\rightarrow\dgs_1$ be such a fibration, and $\dgs_3\rightarrow\dgs_1$ be any morphism in $\Dman[\F]$. To show that $\dgs_4:=\dgs_3\underset{\dgs_1}\times\dgs_2\rightarrow\dgs_3$ is a quasi-isomorphism it is enough to prove that $\hofu[0]\lp\dgs_4\rp\rightarrow\hofu[0]\lp\dgs_3\rp$ is an isomorphism of schemes,\footnote{Recall from (\ref{HomologyScheme}) that $\hofu$ is the associated graded scheme functor.} and the morphisms between the stalks of $\streaf[*]{\hofu\lp\dgs_3\rp}$, $\streaf[*]{\hofu\lp\dgs_4\rp}$ are isomorphisms of graded rings. The first statement immediately follows from the fact that $\dgs\mapsto\hofu[0]\lp\dgs\rp$ is a truncation functor from $\Dgs[\F]$ to the category of classical schemes, and has a left adjoint, hence it preserves pullbacks. 

As the second statement is local on the domain we can assume that $\dgs_1$, $\dgs_2$, $\dgs_3$, $\dgs_4\in\Dmana[\F]$. We need to show that $\dgs_4\rightarrow\dgs_3$ corresponds to a quasi-isomorphism of dg algebras. We use the fact that $\dgal{3}\rightarrow\dgal{4}$ is a quasi-isomorphism, if and only if $\ho[0]\lp\dgal{3}\rp\overset\cong\longrightarrow\ho[0]\lp\dgal{4}\rp$ and the relative cotangent complex is acyclic (e.g.\@ \cite{GaiRoz17} Prop.\@ 8.3.2 p.\@ 62). 

Since $\dgal{1}$, $\dgal{2}$, $\dgal{3}$, $\dgal{4}\in\sderca[\F]$ we can compute their cotangent complexes using the usual Kähler differentials (e.g.\@ \cite{BSY} Prop.\@ 1). Since $\dgal{1}\rightarrow\dgal{2}$, $\dgal{3}\rightarrow\dgal{4}$ correspond to fibrations the morphisms 
\begin{eqn}\label{Kaehler}\Omega_{\F}\lp\dgal{2}\rp\underset{\dgal[0]{1}}\otimes\dgal[0]{2}
	\longrightarrow\Omega_{\F}\lp\dgal{2}\rp,\quad
	\Omega_{\F}\lp\dgal{3}\rp\underset{\dgal[0]{3}}\otimes\dgal[0]{4}
	\longrightarrow\Omega_{\F}\lp\dgal{4}\rp\n\end{eqn}
are cofibrations between cofibrant objects in the projective model structures on the categories of complexes of $\dgal[0]{2}$- and $\dgal[0]{4}$-modules. Therefore the standard quotients of (\ref{Kaehler}) are homotopy quotients, i.e.\@ $\Omega_{\dgal{1}}\lp\dgal{2}\rp$ and $\Omega_{\dgal{3}}\lp\dgal{4}\rp$ are the relative cotangent complexes. Therefore, since $\dgal{1}\rightarrow\dgal{2}$ is a quasi-isomorphism, $\Omega_{\dgal{1}}\lp\dgal{2}\rp$  is acyclic. Since pullbacks preserve relative Kähler differentials, also $\Omega_{\dgal{3}}\lp\dgal{4}\rp$ is acyclic. Thus we conclude that the relative cotangent complex of $\dgal{3}\rightarrow\dgal{4}$ is acyclic.\end{proof}

Now we define weak equivalences in $\Dman[\F]$, and we put a stronger condition, than just being a quasi-isomorphism. The issue here is that a dg manifold $\dgs$ can be quasi-isomorphic to an affine dg manifold, yet $\cs$ might not be affine itself. 

\begin{definition}\label{NewDefinition} A dg manifold $\dgs$ is \ee{almost affine}, if there is an affine dg manifold $\dgs'$ and a quasi-isomorphism $\dgs'\rightarrow\dgs$. A morphism $\dgs_1\rightarrow\dgs_2$ in $\Dman[\F]$ is \ee{a weak equivalence}, if it is a quasi-isomorphism and for any affine dg manifold $\dgs$ and any fibration $\dgs\rightarrow\dgs_2$, the pullback $\dgs\underset{\dgs_2}\times\dgs_1$ is almost affine. A morphism in $\Dman[\F]$ that is simultaneously a fibration and a weak equivalence will be called \ee{a trivial fibration}.	\end{definition}

\begin{remark} A quasi-isomorphism $\dgs_1\rightarrow\dgs_2$, s.t.\@ the morphism between the ambient classical schemes $\cs_1\rightarrow\cs_2$ is affine, is clearly a weak equivalence. In particular, any quasi-isomorphism $\dgs_1\rightarrow\dgs_2$, with $\dgs_1$ being affine, is a weak equivalence.\end{remark}

\begin{proposition}\label{DiaFa} For any dg manifold $\dgs$ there is a factorization of the diagonal $\Delta\colon\dgs\rightarrow\dgs\times\dgs$ into a weak equivalence followed by a fibration.\end{proposition}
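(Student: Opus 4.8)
The plan is to build the factorization locally over an affine atlas of $\dgs$ and then glue, imitating the classical construction of a path object but adapted to the almost-free/smooth structure available in $\Dman[\F]$. First I would observe that $\dgs\times\dgs$ is again a dg manifold (its ambient classical scheme is $\cs\times\cs$, smooth and quasi-projective, and its structure sheaf is locally almost free), so the diagonal $\Delta\colon\dgs\to\dgs\times\dgs$ is a morphism in $\Dman[\F]$ whose underlying map of ambient schemes $\cs\to\cs\times\cs$ is a closed immersion, hence affine. Thus whatever factorization I produce, if the weak-equivalence leg is a quasi-isomorphism with affine map on ambient schemes it will automatically be a weak equivalence by the Remark following Definition \ref{NewDefinition}.

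Next I would construct a cylinder/path object for $\dgs$ locally. On an affine open $U\subseteq\cs$ with $A:=\Gamma(U,\streaf[\bullet]{\dgs})\in\sderca[\F]$, write $A[*]$ as freely generated over $A[0]$ by finitely generated projective modules $\{E^k\}_{k<0}$. Form the algebra $P(A)$ which as a graded $A[0]$-algebra is freely generated by two copies $E^k_0,E^k_1$ of each $E^k$ together with a copy $E^k[1]$ in degree $k-1$ (a ``homotopy'' generator), with differential extending that of $A$ so that $dE^k[1] = E^k_1 - E^k_0 + (\text{correction terms})$, exactly as in the standard mapping-cylinder resolution of the fold map $A\amalg_{A[0]}A\to A$. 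Then $A\amalg_{A[0]}A \to P(A)$ is almost free (hence corresponds to a fibration), $P(A)\to A$ is a quasi-isomorphism, and $A\to P(A)$ is a section of it; moreover $A[0]\to P(A)[0]=A[0]$ is the identity, so the ambient scheme is unchanged. This gives locally a factorization $\dgs|_U \xrightarrow{\sim} \spec{P(A)} \twoheadrightarrow \dgs|_U\times_U\dgs|_U$. Since on an affine scheme the relevant projective modules glue and the construction $A\mapsto P(A)$ is functorial in $A$ (no infinitely-generated free algebras are needed here — the path object on a \emph{single} object, unlike functorial factorization of arbitrary maps, only enlarges the finite generating set by a finite amount), these local path objects patch to a global dg manifold $\dgt$ over $\cs$, with a factorization $\dgs\xrightarrow{j}\dgt\xrightarrow{p}\dgs\times\dgs$ where $p$ restricts over the diagonal copy of $\cs$ to $\cs\to\cs\times\cs$ composed with a smooth-and-locally-almost-free map, i.e.\ $p$ is a fibration in the sense of Definition \ref{DefFibrations}.

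Finally I would check that $j$ is a weak equivalence, not merely a quasi-isomorphism. By construction the map of ambient classical schemes underlying $j$ is an isomorphism ($\cs\xrightarrow{\ =\ }\cs$), so in particular it is affine, and the Remark after Definition \ref{NewDefinition} applies directly: a quasi-isomorphism whose map of ambient schemes is affine is a weak equivalence. (Alternatively, one checks the pullback condition by hand: for any affine dg manifold $\dgs'$ with a fibration $\dgs'\to\dgs\times\dgs$, the pullback $\dgs'\times_{\dgs\times\dgs}\dgt$ has affine ambient scheme since $\dgs'$ does and the map factors through it, and it is quasi-isomorphic to $\dgs'\times_{\dgs\times\dgs}\dgs$, which is affine — so $\dgs'\times_{\dgs\times\dgs}\dgt$ is almost affine.)

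The step I expect to be the main obstacle is the \emph{global} construction of $\dgt$: verifying that the local path-object algebras $P(A)$, together with their differentials involving the chosen generators $E^k$ and the correction terms making $d^2=0$, glue over intersections of affine opens. Unlike the underlying graded pieces, which glue because they are built from the locally free sheaves generating $\streaf[\bullet]{\dgs}$, the differential's correction terms depend on the multiplication and on a contracting homotopy, and one must argue either that a canonical (basis-independent) choice exists — e.g.\ the $\streaf[\bullet]{\dgs}$-linear path object $\streaf[\bullet]{\dgs}\otimes \Omega^\bullet(\Delta^1_{\mathrm{alg}})$-style construction, which is manifestly functorial — or that the transition data assemble into a cocycle. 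Taking the ``polynomial forms on the $1$-simplex'' model $\streaf[\bullet]{\dgs}\otimes_\F \F[t,dt]$ (with $t$ in degree $0$, $dt$ in degree $-1$, $d(t)=dt$) sidesteps the choice of generators entirely: it is strictly functorial in $\dgs$, its degree-$0$ part is $\streaf{\cs}[t]$ so the ambient scheme becomes $\cs\times\mathbb A^1$ which is still smooth quasi-projective, the two evaluations $t=0,1$ give a fibration onto $\dgs\times\dgs$, and the inclusion of constants is a quasi-isomorphism with affine (indeed closed-immersion) map on ambient schemes. I would therefore run the argument with this model, reducing the ``obstacle'' to the routine verification that $t\mapsto 0,1$ yields a map that is smooth in degree $0$ and locally almost free in negative degrees, which is immediate since it is a base change of $\spec{\F[t,dt]}\to\spec{\F}\times\spec{\F}$.
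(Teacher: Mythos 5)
There is a genuine gap, and it sits exactly at the point the paper's proof is designed to handle: the fibration leg of your factorization is not a fibration in the sense of Definition \ref{DefFibrations}, because its degree-$0$ part is not smooth onto $\cs\times\cs$. In both of your constructions the ambient classical scheme of the middle object lies over $\cs$ (it is $\cs$ itself for the mapping cylinder $P(A)$, and $\cs\times\mathbb A^1$ for the $\F[t,dt]$ model), so the induced map to the ambient scheme $\cs\times\cs$ of $\dgs\times\dgs$ factors through the classical diagonal $\cs\hookrightarrow\cs\times\cs$ and is therefore not smooth once $\dim\cs>0$. Relatedly, your local model resolves the relative codiagonal $A\otimes_{A^0}A\to A$, i.e.\ it factors $\dgs|_U\to\dgs|_U\times_U\dgs|_U$, not the absolute diagonal demanded by the statement; composing with $\dgs|_U\times_U\dgs|_U\to\dgs|_U\times\dgs|_U$ reintroduces the non-smooth classical diagonal, and ``composed with a smooth map'' does not repair this. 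The $\F[t,dt]$ model has two further problems: with the paper's cohomological, non-positive grading one cannot have $t$ in degree $0$, $dt$ in degree $-1$ and $d(t)=dt$ (the differential raises degree by $1$); and $(\mathrm{ev}_0,\mathrm{ev}_1)$ lands in $A\times A$, the functions on $\dgs\sqcup\dgs$, so this construction is a cylinder for the scheme $\dgs$ (a factorization of the fold map $\dgs\sqcup\dgs\to\dgs$), not a path object factoring $\dgs\to\dgs\times\dgs$.

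The missing idea is that one must resolve the diagonal in degree $0$ as well, while keeping the ambient scheme smooth over $\cs\times\cs$. The paper does this by taking the ambient classical scheme of the path object to be $\cs\times\cs$ itself, so that the degree-$0$ part of the fibration is the identity, and then ``killing cocycles'': starting from $\dgs_0=\dgs\times\dgs$ one adjoins at each stage a locally free coherent sheaf of generators in degree $k-1$ whose differentials hit the relevant sheaf of cocycles; in particular the first new generators, in degree $-1$, surject onto the ideal sheaf of the diagonal, which is what forces $\hofu[0]$ of the limit to be the diagonal copy of $\dgs$. The existence of such locally free sheaves of generators is exactly where quasi-projectivity enters, via Borelli's theorem that every coherent sheaf on a smooth separated finite-type scheme of this kind is a quotient of a locally free one; this replaces both your gluing worry and the functoriality discussion. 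The resulting $\dgs\to\underset{k\leq 0}\lim\,\dgs_k$ is a quasi-isomorphism whose ambient map is the affine morphism $\cs\to\cs\times\cs$, so your final step (quasi-isomorphism plus affine ambient map implies weak equivalence) is the one part of the proposal that matches the paper.
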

\begin{proof} We use the standard argument of ``killing cocycles'': let $\dgs_0:=\dgs\times\dgs$ and suppose we have constructed  
		\begin{eqn}\dgs\rightarrow\dgs_k\rightarrow\dgs_{k+1}\rightarrow\ldots\rightarrow\dgs_0=\dgs\times\dgs\end{eqn} 
in $\Dman[\F]$ that factorize $\Delta\colon\dgs\rightarrow\dgs\times\dgs$ and s.t.\@\begin{enumerate}
	\item $\dgs_i\rightarrow\dgs_{i+1}$ is a fibration for each $i\geq k$,
	\item $\hofu\lp\dgs\rp\rightarrow\hofu\lp\dgs_k\rp$ is a closed immersion, i.e.\@ it is defined by a sheaf of dg ideals $\lf\mathfrak I^m\subseteq\streaf[m]{\hofu\lp\dgs_k\rp}\rf_{m\leq 0}$,
	\item $\hofu[>k]\lp\dgs\rp\overset{\cong}\longrightarrow\hofu[>k]\lp\dgs_k\rp$ i.e.\@ $\lf\mathfrak I^m=0\rf_{m>k}$.\end{enumerate} 
Since $\cs_k$ is smooth, separated and of finite type over $\F$, every coherent sheaf on $\cs_k$ is a quotient of a locally free coherent sheaf (\cite{Borelli67} Thm.\@ 3.3). Applying this to 
	\begin{eqn}\w{\mathfrak I}^k:=Z\lp\streaf[k]{\dgs_k}\rp\underset{\Delta_*\lp\streaf[k]{\dgs}\rp}\times
	\Delta_*\lp\streaf[k-1]{\dgs}\rp,\end{eqn}
where $Z$ stands for the sub-sheaf of cocycles, we can add a locally free coherent sheaf of new generators in degree $k-1$ whose differentials are $\w{\mathfrak I}^k$. This produces $\dgs_{k-1}$. Taking the limit\footnote{Since at the $k$-th step we add new generators in degree $-k$, it is clear that this limit exists in $\Dman[\F]$.} we obtain the desired factorization $\dgs\rightarrow\underset{k\leq 0}\lim\;\dgs_k\rightarrow\dgs\times\dgs$. Clearly $\dgs\rightarrow\underset{k\leq 0}\lim\;\dgs_k$ is a quasi-isomorphism, and since the map on the ambient classical schemes is affine, it is a weak equivalence.\end{proof}

\begin{remark}\label{GoodFactorization} Notice that the ambient classical scheme in the path object $\underset{k\leq 0}\lim\,\dgs_k$ constructed above is just $\cs\times\cs$. Having proved that pullbacks in $\Dman[\F]$ preserve fibrations and fibrations that are simultaneously quasi-isomorphisms, we can use the factorization lemma in \cite{Brown73} \S I to conclude that any $\dgs_1\rightarrow\dgs_2$ in $\Dman[\F]$ can be factored into $\dgs_1\rightarrow\dgs'_1\rightarrow\dgs_2$, where $\dgs'_1\rightarrow\dgs_2$ is a fibration and $\dgs_1\rightarrow\dgs'_1$ is a weak equivalence having a left inverse that is fibration. Using the path objects we have constructed above, we see from loc.\@ cit.\@ that the ambient classical scheme of $\dgs'_1$ is just $\cs_1\times\cs_2$.\end{remark}

The following proposition shows that a possible obstruction for a dg manifold to be almost affine lies in the set of of ample line bundles on the ambient classical scheme.

\begin{proposition}\label{AmpleEx} Let $\dgs\rightarrow\spec{\dga}$ be a fibration that is also a quasi-isomorphism. Suppose there is an ample line bundle $\twish[1]{\cs}$ on $\cs$, s.t.\@ $\twish[1]{\cs}|_{\hofu[0]\lp\dgs\rp}$ extends to a line bundle on $\Spec\lp\dga[0]\rp$. Then there is $\ddgb{}$ and a fibration $\spec{\ddgb{}}\rightarrow\dgs$ that is also a quasi-isomorphism.\end{proposition}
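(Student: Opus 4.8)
The plan is to realise $\spec{\ddgb{}}$ as the restriction of $\dgs$ to a carefully chosen affine open $U\subseteq\cs$ of the ambient classical scheme, where $U$ is required to contain the underlying classical scheme $Z:=\hofu[0]\lp\dgs\rp$. Granting such a $U$, one sets $\ddgb{}:=\Gamma\lp U,\streaf[\bullet]{\dgs}\rp$; since $\dgs$ is a dg manifold and $U$ is affine this lies in $\sderca[\F]$, and $\spec{\ddgb{}}=\lp U,\streaf[\bullet]{\dgs}|_U\rp$. The open immersion $j\col\spec{\ddgb{}}\hookrightarrow\dgs$ is a fibration in the sense of Definition \ref{DefFibrations} for trivial reasons: in degree $0$ it is the open immersion $U\hookrightarrow\cs$, which is smooth, and in negative degrees it is locally an isomorphism, so condition (2) is met with the two open sets taken equal and with no new generators. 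It is also a quasi-isomorphism because $Z\subseteq U$: the ideal sheaf $\homdi\lp\streaf[-1]{\dgs}\rp$ cuts out $Z$ inside $\cs$, hence cuts out $Z\cap U=Z$ inside $U$, so $\hofu[0]\lp\spec{\ddgb{}}\rp=Z=\hofu[0]\lp\dgs\rp$, and the cohomology sheaves $\hosh[*]\lp\streaf[\bullet]{\dgs}\rp$ restrict to $Z$ the same way before or after restricting to $U$; thus $\hofu\lp j\rp$ is the identity of $\hofu\lp\dgs\rp$.

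So everything reduces to producing such a $U$, and for this I will use two properties of $Z$. First, since $\dgs\rightarrow\spec{\dga}$ is a quasi-isomorphism, $\hofu[0]$ of it is an isomorphism $Z\xrightarrow{\ \cong\ }\hofu[0]\lp\spec{\dga}\rp=\Spec\lp\ho[0]\lp\dga\rp\rp$; in particular $Z$ is affine and sits as a closed subscheme of $S:=\Spec\lp\dga[0]\rp$. Second, by naturality of the closed immersion $\hofu[0]\lp-\rp\hookrightarrow\lp\text{ambient scheme}\rp$, the ring map $\dga[0]\rightarrow\Gamma\lp\cs,\streaf{\cs}\rp$ induced by the degree-$0$ part $\pi_S\col\cs\rightarrow S$ of $\dgs\rightarrow\spec{\dga}$, followed by restriction to $Z$, is the canonical surjection $\dga[0]\twoheadrightarrow\ho[0]\lp\dga\rp\cong\Gamma\lp Z,\streaf{Z}\rp$; and $\pi_S$ restricted to $Z$ is the closed immersion $Z\hookrightarrow S$ of the first point.

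The construction of $U$ has two steps. Step one replaces $\twish[1]{\cs}$ by an ample line bundle trivial on $Z$. Using the hypothesis, pick $\mathcal L$ on $S$ with $\mathcal L|_Z\cong\twish[1]{\cs}|_Z$, and set $M:=\twish[1]{\cs}\otimes\pi_S^*\lp\mathcal L^{-1}\rp$. As $\twish[1]{\cs}$ is ample it is $\pi_S$-relatively ample, tensoring by $\pi_S^*$ of an invertible sheaf on $S$ preserves $\pi_S$-relative ampleness, and $S$ is affine, so $M$ is ample on $\cs$; and $M|_Z\cong\streaf{Z}$ since $\pi_S^*\lp\mathcal L^{-1}\rp$ restricts on $Z$ to $\lp\twish[1]{\cs}|_Z\rp^{-1}$. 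Fix a nowhere-vanishing section $\sigma_0\in\Gamma\lp Z,M|_Z\rp$. Step two builds the section of a power of $M$ whose non-vanishing locus is $U$. Since $M$ is ample and $Z$ is quasi-compact, cover $Z$ by finitely many basic affine opens $\cs_{s_1},\ldots,\cs_{s_k}$ with $s_i\in\Gamma\lp\cs,M^{\otimes m}\rp$ (after equalising powers); these have no common zero on $Z$, so writing $s_i|_Z=f_i\,\sigma_0^{\otimes m}$ with $f_i\in\Gamma\lp Z,\streaf{Z}\rp$, and using that $Z$ is affine, there are $a_i\in\Gamma\lp Z,\streaf{Z}\rp$ with $\sum_ia_if_i=1$. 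By the surjectivity in the second point above, lift each $a_i$ to $\tilde a_i\in\dga[0]\subseteq\Gamma\lp\cs,\streaf{\cs}\rp$; then $s:=\sum_i\tilde a_i\,s_i\in\Gamma\lp\cs,M^{\otimes m}\rp$ restricts on $Z$ to $\sum_ia_if_i\,\sigma_0^{\otimes m}=\sigma_0^{\otimes m}$, hence is nowhere zero there. So $U:=\cs_s$ is affine (ampleness of $M$, with $s$ a section of a power of $M$) and contains $Z$.

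The main obstacle — and the place where the hypothesis really enters — is step two. Without first twisting $\twish[1]{\cs}$ to trivialise its restriction to $Z$, a single section of a power of $\twish[1]{\cs}$ has no reason to generate $\twish[1]{\cs}|_Z$ everywhere, since $[\twish[1]{\cs}|_Z]\in\mathrm{Pic}\lp Z\rp$ need not be torsion; and without the surjection $\dga[0]\twoheadrightarrow\ho[0]\lp\dga\rp$ one cannot lift the partition-of-unity coefficients $a_i$ to \emph{global} sections of $\streaf{\cs}$ on the possibly non-affine scheme $\cs$. Everything else — that $M$ is still ample, that $\cs_s$ is affine, and that $j$ is a fibration and a quasi-isomorphism — is routine.
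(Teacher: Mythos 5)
Your reduction to producing an affine open $U\subseteq\cs$ containing $Z:=\hofu[0]\lp\dgs\rp$ is correct (open immersions of dg manifolds are fibrations, and restricting to an open containing $Z$ does not change the associated graded scheme), and your steps one and two take a genuinely more direct route than the paper, which instead uses $\twish[1]{\cs}$ to embed $\cs$ into a relative projective space over $\Spec\lp\dga[0]\rp$ and uses the extension hypothesis to produce a relative hyperplane section missing $Z$. The twist $M=\twish[1]{\cs}\otimes\pi_S^*\lp\mathcal L^{-1}\rp$ is indeed ample, and lifting the partition-of-unity coefficients through the surjection $\dga[0]\twoheadrightarrow\Gamma\lp Z,\streaf{Z}\rp$ is exactly the right use of both the extension hypothesis and the fibration structure; up to the last sentence of step two the argument is sound.

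The gap is the final claim that $U:=\cs_s$ is affine. For an ample invertible sheaf $M$ and a global section $s$ of $M^{\otimes m}$ the locus $\cs_s$ need not be affine: ampleness only guarantees that those $\cs_s$ which \emph{happen} to be affine form a basis of the topology. For instance $\streaf{X}$ is ample on $X=\mathbb A^2_{\F}\setminus\lf 0\rf$ (every coherent sheaf on $X$ extends to $\mathbb A^2_{\F}$ and is therefore globally generated), yet $X_1=X$ is not affine. What is true in general is only that $\cs_s$ is quasi-affine, being $\cs\cap D_+\lp s\rp$ under the canonical open immersion of $\cs$ into ${\rm Proj}$ of the section ring of $M$. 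This is precisely the difficulty the paper isolates in the Lemma inside its proof, which passes from a quasi-affine neighbourhood of $Z$ to an affine one. Your argument can be closed by running your own lifting trick once more: with $A=\Gamma\lp\cs_s,\streaf{\cs_s}\rp$ and $\cs_s\hookrightarrow\spe{A}$ open, let $f_1,\dots,f_r$ generate the ideal of the reduced complement; these have no common zero on the affine $Z$, so $\sum\bar g_i\bar f_i=1$ in $\Gamma\lp Z,\streaf{Z}\rp$, the coefficients $\bar g_i$ lift through $\dga[0]$ to elements $g_i\in A$, and $W:=D\lp\sum g_if_i\rp$ is then an affine open of $\spe{A}$ contained in $\cs_s$ and containing $Z$. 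Without this extra step the proof is incomplete.
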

\begin{proof} We denote $\dgs_0:=\spec{\dga}$ and $Z:=\hofu[0]\lp\dgs\rp\cong\hofu[0]\lp\dgs_0\rp$. Using $\twish[1]{\cs}$ we choose $n\in\posi$ and an immersion $\cs\times\cs_0\hookrightarrow\mathbb P^n_{\F}\times\cs_0$ over $\cs_0$. Let $\iota\colon Z\hookrightarrow\mathbb P^n_{\F}\times Z$ be given by $Z\hookrightarrow\cs$, $Z\hookrightarrow\cs_0$. I.e.\@ $\iota$ is a section of a relative projective space, and as such it corresponds to a surjective morphism of vector bundles
	\begin{eqn}\label{Emb}\F^{\times^{n+1}}\times Z\longrightarrow\twish[1]{\mathbb P^n_{\F}}|_Z.\n\end{eqn}	
Since $Z$ is affine, we can choose a hyperplane section $\sigma$ of $\mathbb P^n_{\F}\times Z\rightarrow Z$ that does not intersect the image of $\iota$. Such a section is given by choosing a section of  (\ref{Emb}). By assumption $\twish[1]{\mathbb P^n_{\F}}|_Z$ extends to a line bundle $L$ on $\cs_0$. Since $\cs_0$ is affine, we can choose $\w{\sigma}\colon L\rightarrow\F^{\times^{n+1}}\times\cs_0$, that restricts to $\sigma$ on $Z\subseteq\cs_0$. Let $U\subseteq\cs_0$ be the open subscheme where $\w{\sigma}$ is injective, on $U$ $\w{\sigma}$ defines a hyperplane section of $\mathbb P^n_{\F}\times U$. Let $V\subseteq \mathbb P^n_{\F}\times U$ be the complement of this hyperplane. Clearly $Z\subseteq V$, i.e.\@ 
	\begin{eqn}\label{Qinc}\dgs':=\lp V,\streaf{\dgs}\big\vert_{V}\rp\hooklongrightarrow\dgs\n\end{eqn} 
is a quasi-isomorphism, with the morphism between the ambient classical schemes being an open immersion, in particular (\ref{Qinc}) is a fibration. By construction $\cs'$ is quasi-affine over $\cs_0$, and hence it is quasi-affine over $\F$. To go from quasi-affine to affine we will need the following lemma.

\begin{lemma} Given $\dgs'\in\Dgs[\F]$ s.t.\@ $\cs'$ is quasi-affine and $\hofu[0]\lp\dgs'\rp$ is affine over $\F$, there is an open $W\subseteq\cs'$, s.t.\@ $\lp W,\streaf{\dgs'}\big\vert_W\rp\hookrightarrow\dgs'$ is a weak equivalence and $W$ is an affine scheme over $\F$.\end{lemma}
\begin{proof} Consider the open immersion $\cs'\hookrightarrow\spe{\Gamma\lp\cs',\streaf{\cs'}\rp}$ and let $\mathfrak I$ be the ideal of the reduced complement. Let $\lf f_i\rf_{i\in I}$ be some generators of $\mathfrak I$, and let $\lf \overline{f}_i\rf_{i\in I}$ be their images in $\Gamma\lp\hofu[0]\lp\dgs'\rp,\streaf{\hofu[0]\lp\dgs'\rp}\rp$. Since $\hofu[0]\lp\dgs'\rp$ is affine $\lf \overline{f}_i\rf_{i\in I}$ generate the unit ideal in $\Gamma\lp\hofu[0]\lp\dgs'\rp,\streaf{\hofu[0]\lp\dgs'\rp}\rp$, and we can find $\lf g_i\rf_{i\in I}\subset\Gamma\lp\cs',\streaf{\cs'}\rp$, with only finitely many of them $\neq 0$, s.t.\@ $h:=\underset{i\in I}\sum g_if_i$ is invertible around $\hofu[0]\lp\dgs'\rp$. Let $W:=\lf h\neq 0\rf\subseteq\cs'$. It is clearly affine and $\lp W,\streaf{\dgs'}\big\vert_W\rp\hookrightarrow\dgs'$ is a weak equivalence.\end{proof}
Denoting $\spec{\ddgb{}}:=\lp W,\streaf{\dgs'}\big\vert_W\rp$ we have the composite
	\begin{eqn}\spec{\ddgb{}}\hooklongrightarrow\dgs'\hooklongrightarrow\dgs\end{eqn}
which is a fibration and a quasi-isomorphism.\end{proof}

\begin{remark}\label{Sim} It is obvious that given a quasi-isomorphism $\dgs_1\rightarrow\dgs_2$, with $\dgs_1$ being almost affine, also $\dgs_2$ is almost affine. On the other hand, if $\dgs_1\rightarrow\dgs_2$ is a weak equivalence and $\dgs_2$ is affine, $\dgs_1$ has to be almost affine.\end{remark}
Here are some more simple consequences of the definitions.

\begin{proposition}\label{Simple}\begin{enumerate} \item Any quasi-isomorphism $\dgs_1\rightarrow\dgs_2$, where $\dgs_1$ is almost affine, is a weak equivalence.
\item Let $\dgs$ be an almost affine dg manifold. There is an affine dg manifold $\spec{\ddgb{}}$ and a trivial fibration $\spec{\ddgb{}}\rightarrow\dgs$.
\item A morphism $\dgs_1\rightarrow\dgs_2$ is a weak equivalence, if and only if for any almost affine $\dgs$ and a fibration $\dgs\rightarrow\dgs_2$ the pullback $\dgs\underset{\dgs_2}\times\dgs_1$ is almost affine.\end{enumerate}\end{proposition}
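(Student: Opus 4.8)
The plan is to deduce all three parts from the category-of-fibrant-objects apparatus already assembled: Prop.~\ref{WrongCatF} (pullbacks of fibrations, and of fibrations that are quasi-isomorphisms), the factorization of Prop.~\ref{DiaFa} and Remark~\ref{GoodFactorization}, Prop.~\ref{AmpleEx}, and Remark~\ref{Sim}. Two elementary facts will be used repeatedly: $\hofu[0]$ commutes with fiber products, and a fiber product of affine dg manifolds over an arbitrary dg manifold is affine, since its ambient scheme is a closed subscheme of a product of two affine schemes (the ambient scheme of a dg manifold being separated).

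For (1): unwinding Definition~\ref{NewDefinition}, I would fix an affine $\dgs$ and a fibration $\dgs\to\dgs_2$ and show $\dgs\underset{\dgs_2}{\times}\dgs_1$ is almost affine. Choose an affine $\dgs_1'$ with a quasi-isomorphism $\dgs_1'\to\dgs_1$. The projection $\dgs\underset{\dgs_2}{\times}\dgs_1\to\dgs_1$ is a pullback of $\dgs\to\dgs_2$, hence a fibration, and fibrations are flat: in the local model of Def.~\ref{DefFibrations}, $\dgal[*]{1}$ is a flat graded $\dgal[*]{2}$-module, by smoothness in degree $0$ together with almost-freeness. Hence base change along it preserves quasi-isomorphisms, so $\dgs\underset{\dgs_2}{\times}\dgs_1'\to\dgs\underset{\dgs_2}{\times}\dgs_1$ is a quasi-isomorphism with affine source, and Remark~\ref{Sim} gives that $\dgs\underset{\dgs_2}{\times}\dgs_1$ is almost affine.

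For (2): I would first observe that $\hofu[0]\lp\dgs\rp$ is affine, being the underlying classical scheme of an affine dg manifold. Fix an ample line bundle $\twish[1]{\cs}$ on $\cs$; its restriction to $\hofu[0]\lp\dgs\rp$ is a finitely generated projective $\streaf{\hofu[0]\lp\dgs\rp}$-module, hence a direct summand of a free one, so it is classified by a morphism $c\colon\hofu[0]\lp\dgs\rp\to\mathcal P$ into the smooth affine variety $\mathcal P$ of rank-one idempotent $n\times n$ matrices, which carries a universal line bundle $\mathcal L$ with $c^{*}\mathcal L\cong\twish[1]{\cs}|_{\hofu[0]\lp\dgs\rp}$. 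Given any affine dg manifold $\dgs_0'$ with ambient scheme $\cs_0'$ and a quasi-isomorphism $\dgs_0'\to\dgs$, I would replace it by an affine $\dgs'\to\dgs$ with ambient scheme $\cs':=\cs_0'\times\mathcal P$, embedding $\hofu[0]\lp\dgs\rp$ into $\cs'$ by its given inclusion into $\cs_0'$ together with $c$, and transporting the dg structure of $\dgs_0'$ accordingly; then $\twish[1]{\cs}|_{\hofu[0]\lp\dgs\rp}$ is the restriction to $\hofu[0]\lp\dgs'\rp$ of the line bundle $\mathrm{pr}_{\mathcal P}^{*}\mathcal L$ on $\cs'$. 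Next I would factor $\dgs'\to\dgs$ via Remark~\ref{GoodFactorization} as $\dgs'\to\dgs''\to\dgs$, with $\dgs''\to\dgs$ a fibration (necessarily a quasi-isomorphism, since $\dgs'\to\dgs''$ is a weak equivalence and $\dgs'\to\dgs$ is one), $\dgs'\to\dgs''$ a weak equivalence with a fibration left inverse $r\colon\dgs''\to\dgs'$ (so $r$ is a fibration and a quasi-isomorphism), and $\dgs''$ having ambient scheme $\cs'\times\cs$. Because $\cs'$ is affine, $\mathrm{pr}_{\cs}^{*}\twish[1]{\cs}$ is an ample line bundle on $\cs'\times\cs$ which restricts on $\hofu[0]\lp\dgs''\rp\cong\hofu[0]\lp\dgs\rp$ to $\twish[1]{\cs}|_{\hofu[0]\lp\dgs\rp}$, and the latter extends over $\cs'$ by construction; so Prop.~\ref{AmpleEx} applied to $r$ produces an affine $\spec{\ddgb{}}$ with a fibration $\spec{\ddgb{}}\to\dgs''$ that is a quasi-isomorphism. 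The composite $\spec{\ddgb{}}\to\dgs''\to\dgs$ is then a fibration and a quasi-isomorphism whose source is affine, hence a weak equivalence by (1), i.e.\ a trivial fibration. The hard part is the construction of $\dgs'$: realizing the passage from $\dgs_0'\times\mathcal P$ to its graph locus (a regular immersion, being a section of a smooth morphism) by an almost-free extension that leaves the quasi-isomorphism type unchanged requires the killing-cocycles bookkeeping of Prop.~\ref{DiaFa}.

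For (3): the implication ``$\Leftarrow$'' is immediate, since affine dg manifolds are in particular almost affine, so the stated condition specializes to the one in Definition~\ref{NewDefinition}, and hence a quasi-isomorphism satisfying it is a weak equivalence. For ``$\Rightarrow$'', let $\dgs_1\to\dgs_2$ be a weak equivalence, $\dgs$ almost affine, and $f\colon\dgs\to\dgs_2$ a fibration. By (2) there is an affine $\spec{\ddgb{}}$ and a trivial fibration $g\colon\spec{\ddgb{}}\to\dgs$, so $f\circ g$ is a fibration from an affine dg manifold, and Definition~\ref{NewDefinition} gives that $\spec{\ddgb{}}\underset{\dgs_2}{\times}\dgs_1$ is almost affine. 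The canonical map $\spec{\ddgb{}}\underset{\dgs_2}{\times}\dgs_1\to\dgs\underset{\dgs_2}{\times}\dgs_1$ is the pullback of the trivial fibration $g$ along the fibration $\dgs\underset{\dgs_2}{\times}\dgs_1\to\dgs$, hence a quasi-isomorphism by Prop.~\ref{WrongCatF}; therefore Remark~\ref{Sim} shows $\dgs\underset{\dgs_2}{\times}\dgs_1$ is almost affine, as desired.
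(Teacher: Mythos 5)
Parts (1) and (3) of your proposal are correct and follow the paper's own argument essentially step for step: in (1) you base--change an affine model $\dgs'_1\rightarrow\dgs_1$ along the fibration $\dgs\underset{\dgs_2}\times\dgs_1\rightarrow\dgs_1$ and invoke Remark \ref{Sim}, and in (3) you compose the given fibration with the trivial fibration supplied by part (2), identify $\spec{\ddgb{}}\underset{\dgs_2}\times\dgs_1\rightarrow\dgs\underset{\dgs_2}\times\dgs_1$ as a pullback of that trivial fibration, and apply Proposition \ref{WrongCatF} and Remark \ref{Sim}. Your explicit appeal to flatness of fibrations in (1) is a justification the paper leaves implicit (the needed quasi-isomorphism does not follow from Proposition \ref{WrongCatF} alone, since $\dgs'_1\rightarrow\dgs_1$ is not assumed to be a fibration), and it is correct.

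Part (2) is where you genuinely diverge from the paper, and where your proposal has a real gap. The paper factors $\spec{\dga}\rightarrow\dgs$ as $\spec{\dga}\rightarrow\dgs_1\rightarrow\dgs$ via Remark \ref{GoodFactorization}, so that $\cs_1=\Spec\lp\dga[0]\rp\times\cs$ and the retraction $\dgs_1\rightarrow\spec{\dga}$ is a fibration and a quasi-isomorphism, asserts that every line bundle on $\cs_1$ restricts on $\hofu[0]\lp\dgs_1\rp$ to one that extends to $\Spec\lp\dga[0]\rp$, and then applies Proposition \ref{AmpleEx} to that retraction. Your detour through the affine variety $\mathcal P$ of rank-one idempotents is designed to manufacture this extension hypothesis rather than assert it --- a defensible instinct, since restriction of Picard groups to a closed subscheme of an affine scheme need not be surjective --- but everything hinges on producing an affine dg manifold $\dgs'$ with ambient scheme $\cs'_0\times\mathcal P$ whose underlying classical scheme is the graph of $c$ and which maps to $\dgs$ by a quasi-isomorphism. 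You defer this to ``the killing-cocycles bookkeeping of Proposition \ref{DiaFa}'', but that machinery factors a \emph{given} morphism of dg manifolds, and here there is no morphism to factor: $c$ is defined only on $Z:=\hofu[0]\lp\dgs\rp$ and does not extend to a morphism $\cs'_0\rightarrow\mathcal P$ (its matrix entries lift to functions on the affine scheme $\cs'_0$, but the lifted matrix is no longer idempotent). So the one object your argument actually needs --- an almost free sheaf of dg algebras on $\cs'_0\times\mathcal P$ resolving the regular immersion $\Gamma_c\hookrightarrow Z\times\mathcal P$ and mapping quasi-isomorphically onto $\streaf[\bullet]{\dgs'_0}$ --- is neither constructed nor reducible to a cited result; it would require a separate inductive construction in the style of Proposition \ref{DiaFa} but starting from a Koszul resolution of the graph. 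Until that is supplied, part (2), and with it the forward implication of part (3), is not proved.
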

\begin{proof}\begin{enumerate} \item By assumption there is a quasi-isomorphism $\spec{\dga}\rightarrow\dgs_1$. Let $\spec{\ddgb{}}\rightarrow\dgs_2$ be any fibration. Then the ambient classical scheme of $\spec{\ddgb{}}\underset{\dgs_2}\times\spec{\dga}$ is affine, and the quasi-isomorphism $\spec{\ddgb{}}\underset{\dgs_2}\times\spec{\dga}\rightarrow\spec{\ddgb{}}\underset{\dgs_2}\times\dgs_1$ shows that $\spec{\ddgb{}}\underset{\dgs_2}\times\dgs_1$ is almost affine.
		
\item Since $\dgs$ is almost affine, we can find $\dga$ and a quasi-isomorphism $\spec{\dga}\rightarrow\dgs$. Consider the factorization 
	\begin{eqn}\spec{\dga}\longrightarrow\dgs_1\longrightarrow\dgs\end{eqn}
into a quasi-isomorphism followed by a fibration as in Remark \ref{GoodFactorization}, where $\cs_1=\Spec\lp\dga[0]\rp\times\cs$. We also have the fibration $\dgs_1\rightarrow\spec{\dga}$, that is a left inverse to $\spec{\dga}\rightarrow\dgs_1$.	This means that for any line bundle $L$ on $\cs_1$ the restriction $L|_{\hogy[0]\lp\dgs_1\rp}$ extends to $\spec{\dga}$. Applying Proposition \ref{AmpleEx} we obtain a fibration $\spec{\ddgb{}}\rightarrow\dgs_1$ that is also a quasi-isomorphism.

\item Suppose $\dgs_1\rightarrow\dgs_2$ is a weak equivalence, and let $\dgs\rightarrow\dgs_2$ be a fibration, with $\dgs$ being almost affine. Using part (2) we can choose a trivial fibration $\spec{\dga}\rightarrow\dgs$. Then 
	\begin{eqn}\spec{\dga}\underset{\dgs_2}\times\dgs_1\rightarrow\dgs\underset{\dgs_2}\times\dgs_1\end{eqn} 
is a quasi-isomorphism showing that $\dgs\underset{\dgs_2}\times\dgs_1$ is almost affine.\end{enumerate}\end{proof}

\begin{proposition}\label{PullTrivial}\begin{enumerate}\item The class of weak equivalences in $\Dman[\F]$ has the $2$-out-of-$3$ property.
\item Pullbacks of trivial fibrations are trivial fibrations.\end{enumerate}\end{proposition}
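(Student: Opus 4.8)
\textbf{Proof plan for Proposition \ref{PullTrivial}.}

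The plan is to prove the two statements in turn, leveraging the characterizations of weak equivalences already established, especially Proposition \ref{Simple}(3), which reduces checking a weak equivalence to testing against pullbacks along fibrations from almost affine (rather than only affine) dg manifolds.

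For part (1), the $2$-out-of-$3$ property, I would first recall that quasi-isomorphisms in $\Dgs[\F]$ already satisfy $2$-out-of-$3$, since $\hofu$ is a functor and isomorphisms of graded schemes have this property; so in each of the three cases it only remains to verify the extra almost-affineness condition in Definition \ref{NewDefinition}. Suppose $\dgs_1\xrightarrow{f}\dgs_2\xrightarrow{g}\dgs_3$ with two of $f$, $g$, $gf$ weak equivalences; the third is at least a quasi-isomorphism by the quasi-iso $2$-out-of-$3$. The essential case is showing $f$ is a weak equivalence when $g$ and $gf$ are: given an affine dg manifold $\dgs$ and a fibration $\dgs\rightarrow\dgs_2$, I would form the fibration $\dgs\underset{\dgs_2}\times\dgs_2 \cong \dgs \rightarrow \dgs_2$, compose with $g$ to get a morphism $\dgs\rightarrow\dgs_3$, factor it (Remark \ref{GoodFactorization}) as $\dgs\rightarrow\dgs'\rightarrow\dgs_3$ with $\dgs'\rightarrow\dgs_3$ a fibration and $\dgs\rightarrow\dgs'$ a weak equivalence possessing a fibration left inverse; note $\dgs'$ is almost affine by Remark \ref{Sim} since $\dgs$ is affine. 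Because $g$ is a weak equivalence, $\dgs'\underset{\dgs_3}\times\dgs_2$ is almost affine (Proposition \ref{Simple}(3)), and it carries a fibration to $\dgs_2$; because $gf$ is a weak equivalence, the further pullback $\dgs'\underset{\dgs_3}\times\dgs_1$ is almost affine. Then $\dgs\underset{\dgs_2}\times\dgs_1$ receives a quasi-isomorphism from $\bigl(\dgs'\underset{\dgs_3}\times\dgs_2\bigr)\underset{\dgs_2}\times\dgs_1 \cong \dgs'\underset{\dgs_3}\times\dgs_1$ (the map being a pullback of the weak equivalence $\dgs\rightarrow\dgs'\underset{\dgs_3}\times\dgs_2$), so by Remark \ref{Sim} it is almost affine, as required. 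The remaining two cases (deducing $g$ from $f$ and $gf$; deducing $gf$ from $f$ and $g$) are similar but easier, using that a pullback of a weak equivalence along a fibration is a quasi-isomorphism with affine-over-affine ambient base change, hence a weak equivalence.

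For part (2), let $p\colon\dgs_2\rightarrow\dgs_1$ be a trivial fibration and $\dgs_3\rightarrow\dgs_1$ arbitrary; set $\dgs_4:=\dgs_3\underset{\dgs_1}\times\dgs_2$. By Proposition \ref{WrongCatF} the projection $q\colon\dgs_4\rightarrow\dgs_3$ is a fibration and, since $p$ is in particular a fibration that is a quasi-isomorphism, $q$ is a quasi-isomorphism too. It remains to upgrade this quasi-isomorphism to a weak equivalence using Proposition \ref{Simple}(3): take an almost affine $\dgs$ and a fibration $\dgs\rightarrow\dgs_3$; I must show $\dgs\underset{\dgs_3}\times\dgs_4$ is almost affine. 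But $\dgs\underset{\dgs_3}\times\dgs_4 \cong \dgs\underset{\dgs_1}\times\dgs_2$, and the composite $\dgs\rightarrow\dgs_3\rightarrow\dgs_1$ is a fibration (fibrations compose, immediate from Definition \ref{DefFibrations}), so this is just the pullback of the trivial fibration $p$ along a fibration out of the almost affine $\dgs$. Choosing, via Proposition \ref{Simple}(2), a trivial fibration $\spec{\dga}\rightarrow\dgs$ from an affine dg manifold and pulling everything back, the object $\spec{\dga}\underset{\dgs_1}\times\dgs_2$ has affine ambient classical scheme (it is a pullback of the fibration $p$, so its ambient scheme is $\Spec(\dga[0])\underset{\cs_1}\times\cs_2$, affine since $\spec{\dga}\rightarrow\dgs_1$ factors through an affine) and maps quasi-isomorphically (pullback of the weak equivalence $\spec{\dga}\rightarrow\dgs$ along a fibration) to $\dgs\underset{\dgs_1}\times\dgs_2$; hence the latter is almost affine.

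The main obstacle I anticipate is bookkeeping in part (1): the definition of weak equivalence is asymmetric (it tests pullbacks along fibrations into the \emph{target}), so the three cases of $2$-out-of-$3$ are genuinely different, and the case deducing the first map $f$ is a weak equivalence is the delicate one — it forces the factorization-and-double-pullback maneuver above, where one must carefully track that each intermediate pullback is taken along an honest fibration and that almost-affineness transports along the resulting quasi-isomorphisms via Remark \ref{Sim}. The subtlety is ensuring that the fibration left-inverse furnished by Remark \ref{GoodFactorization} (equivalently, that the ambient classical scheme of the factorization is a product) is actually used, or else replaced by a direct appeal to Proposition \ref{Simple}(2), to guarantee the affine witness survives the chain of base changes.
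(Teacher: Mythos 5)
Your overall skeleton (factor the composite via Remark \ref{GoodFactorization}, then test against fibrations from almost affine objects using Proposition \ref{Simple}) matches the paper's, but there is a genuine gap at the crux of both parts: you transport almost-affineness in the wrong direction along a quasi-isomorphism. Remark \ref{Sim} says that if a quasi-isomorphism has almost affine \emph{source}, then its target is almost affine; it gives nothing about the source when the target is almost affine. In your delicate case of (1) (deducing $f$ from $g$ and $gf$), the comparison map induced by $\dgs\rightarrow\dgs'$ runs $\dgs\underset{\dgs_2}\times\dgs_1\longrightarrow\dgs'\underset{\dgs_3}\times\dgs_1$, i.e.\@ \emph{from} the object you must control \emph{to} the object you know is almost affine --- not the other way round, as you assert when you say the former ``receives a quasi-isomorphism from'' the latter. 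So your appeal to Remark \ref{Sim} fails exactly where the argument has to close. This is the point where the paper does real work: it picks (via Proposition \ref{Simple}(2)) an affine $\dgs''$ with a trivial fibration onto $\dgs'\underset{\dgs_3}\times\dgs_1$, forms the further pullback $\dgs'''$, and uses the ample-line-bundle extension result Proposition \ref{AmpleEx} (this is why $\dgs$ is taken \emph{affine}, so that an ample sheaf pulled back from $\cs_1$ extends through $\cs''$) to show $\dgs'''$ is almost affine; only then does Remark \ref{Sim}, applied to the quasi-isomorphism $\dgs'''\rightarrow\dgs\underset{\dgs_2}\times\dgs_1$, which now has almost affine source, finish the case.

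Part (2) has the same defect plus two more concrete errors. First, the composite $\dgs\rightarrow\dgs_3\rightarrow\dgs_1$ is \emph{not} a fibration: $\dgs_3\rightarrow\dgs_1$ is an arbitrary morphism, so ``fibrations compose'' is irrelevant, and you cannot test the weak-equivalence property of $p$ against this composite without first factoring it (Remark \ref{GoodFactorization}) --- which reintroduces precisely the wrong-way quasi-isomorphism above. Second, the ambient classical scheme $\Spec\lp\dga[0]\rp\underset{\cs_1}\times\cs_2$ of $\spec{\dga}\underset{\dgs_1}\times\dgs_2$ need not be affine: the ambient morphism $\cs_2\rightarrow\cs_1$ of a fibration is only smooth, not affine, so the fibre product over an affine base can fail to be affine even though $\hofu[0]\lp\dgs_2\rp\cong\hofu[0]\lp\dgs_1\rp$. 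This is exactly the phenomenon the notion ``almost affine'' and Proposition \ref{AmpleEx} were introduced to handle, so it cannot be assumed away. The paper's proof of (2) is, for these reasons, verbatim the same factorization-plus-Proposition-\ref{AmpleEx} maneuver as the hard case of (1); any correct proof along your lines will need that (or an equivalent) ingredient.
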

\begin{proof}\begin{enumerate}\item Consider a sequence of morphisms
	\begin{eqn}\dgs_1\overset{\alpha}\longrightarrow\dgs_2\overset{\beta}\longrightarrow\dgs_3.\end{eqn}
If both $\alpha$ and $\beta$ are weak equivalences, part (3) of Proposition \ref{Simple} immediately implies that $\beta\circ\alpha$ is also a weak equivalence.

Suppose that $\alpha$ and $\beta\circ\alpha$ are weak equivalences. Let $\dgs\rightarrow\dgs_3$ be a fibration with $\dgs$ being almost affine. By assumption $\dgs\underset{\dgs_3}\times\dgs_1$ is almost affine. The quasi-isomorphism $\dgs\underset{\dgs_3}\times\dgs_1\rightarrow\dgs\underset{\dgs_3}\times\dgs_2$ shows that also $\dgs\underset{\dgs_3}\times\dgs_2$ is almost affine, i.e.\@ $\beta$ is a weak equivalence. 

Suppose that $\beta$ and $\beta\circ\alpha$ are weak equivalences. Let $\dgs\rightarrow\dgs_2$ be a fibration with $\dgs$ being affine. The composite $\dgs\rightarrow\dgs_2\rightarrow\dgs_3$ is not necessarily a fibration, but we can factorize it into a weak equivalence followed by a fibration $\dgs\rightarrow\dgs'\rightarrow\dgs_3$ (Remark \ref{GoodFactorization}). Then we have
	\begin{eqn}\dgs\underset{\dgs_2}\times\dgs_1\longrightarrow\dgs'\underset{\dgs_3}\times\dgs_1
	\longrightarrow\dgs_1,\end{eqn}
with the middle term being almost affine and the left arrow being a quasi-isomorphism. Using part (2) of Proposition \ref{Simple} we choose an affine $\dgs''$ and a trivial fibration $\dgs''\rightarrow\dgs'\underset{\dgs_3}\times\dgs_1$. We denote
	\begin{eqn}\dgs''':=\lp\dgs\underset{\dgs_2}\times\dgs_1\rp\underset{\dgs'\underset{\dgs_3}\times\dgs_1}\times
	\dgs''.\end{eqn}
Since $\dgs$ is affine, the pullback of an ample sheaf from $\cs_1$ is ample on the ambient scheme of $\dgs\underset{\dgs_2}\times\dgs_1$. The same is true for $\cs'''$. This pulling back goes through $\cs''$, i.e.\@ we can apply Proposition \ref{AmpleEx} to $\dgs'''\rightarrow\dgs''$, obtaining that $\dgs'''$ is almost affine, and hence so is $\dgs\underset{\dgs_2}\times\dgs_1$.

\item Let $\dgs_1\rightarrow\dgs_2$ be a trivial fibration, and let $\dgs_3\rightarrow\dgs_2$ be any morphisms. We know already that $\dgs_3\underset{\dgs_2}\times\dgs_1\rightarrow\dgs_3$ is a fibration and a quasi-isomorphism. Let $\dgs\rightarrow\dgs_3$ be a fibration with $\dgs$ being affine. We need to show that $\dgs\underset{\dgs_2}\times\dgs_1$ is almost affine. Consider a factorization $\dgs\rightarrow\dgs'\rightarrow\dgs_2$ into a weak equivalence followed by a fibration. Then we have
	\begin{eqn}\dgs\underset{\dgs_2}\times\dgs_1\longrightarrow\dgs'\underset{\dgs_2}\times\dgs_1
		\longrightarrow\dgs_1,\end{eqn}
with the middle term being almost affine and the left arrow being a quasi-isomorphism. Using part (2) of Proposition \ref{Simple} we choose an affine $\dgs''$ and a trivial fibration $\dgs''\rightarrow\dgs'\underset{\dgs_2}\times\dgs_1$. We denote
	\begin{eqn}\dgs''':=\lp\dgs\underset{\dgs_2}\times\dgs_1\rp\underset{\dgs'\underset{\dgs_2}\times\dgs_1}\times
	\dgs''.\end{eqn}
Since $\dgs$ is affine, the pullback of an ample sheaf from $\cs_1$ is ample on the ambient scheme of $\dgs\underset{\dgs_2}\times\dgs_1$. The same is true for $\cs'''$. This pulling back goes through $\cs''$, i.e.\@ we can apply Proposition \ref{AmpleEx} to $\dgs'''\rightarrow\dgs''$, obtaining that $\dgs'''$ is almost affine, and hence so is $\dgs\underset{\dgs_2}\times\dgs_1$.
\end{enumerate}\end{proof}

Recall (\cite{Brown73}, \S I.1) that \ee{a category of fibrant objects} is a category with distinguished sets of weak equivalences and fibrations s.t.\@: there is a final object; every morphism to the final object is a fibration; the class of weak equivalences contains all isomorphisms and has the 2-out-of-3 property; the class of fibrations contains all isomorphisms and is closed with respect to compositions; pullbacks of (trivial) fibrations exist and are (trivial) fibrations; for any object the diagonal morphism can be factored into a weak equivalence followed by a fibration.

\begin{theorem}\label{FibrantObjects} Together with fibrations and weak equivalences as defined above,  $\Dman[\F]$, $\Dmana[\F]$ are categories of fibrant objects.\end{theorem}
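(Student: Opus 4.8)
The plan is to verify the axioms of a category of fibrant objects (in the sense of \cite{Brown73} \S I.1) one by one, using the structural results already established in this section. Most axioms are immediate, and the two substantive ones have already been proved.

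\medskip

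First, the final object. The dg scheme $\spec{\F}$ is a dg manifold (its ambient scheme is $\Spec(\F)$, which is smooth quasi-projective, and the dg structure is trivial, hence freely generated by the empty family of bundles); it is affine, so it lies in $\Dmana[\F]$ as well. Any $\dgs\in\Dman[\F]$ admits a unique morphism $\dgs\rightarrow\spec{\F}$, and by the first Remark following Definition \ref{DefFibrations} this morphism is a fibration precisely because $\dgs$ is a dg manifold (its ambient scheme is smooth over $\F$, and the negative part of the structure sheaf is locally almost free over the degree-$0$ part). Thus there is a final object and every morphism to it is a fibration. Next, the class of fibrations: identities are clearly fibrations (degree-$0$ part is the identity, hence smooth, and $\dga[*]$ is the trivial free algebra), and fibrations are closed under composition since a composite of smooth morphisms in degree $0$ is smooth, and a composite of almost free extensions of graded algebras is almost free (the generating bundles just concatenate). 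Isomorphisms are fibrations for the same reason, and they are obviously quasi-isomorphisms, hence weak equivalences (e.g.\@ by the Remark following Definition \ref{NewDefinition}, or directly from Proposition \ref{Simple}(1)).

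\medskip

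It remains to collect the non-formal inputs. The $2$-out-of-$3$ property for weak equivalences is Proposition \ref{PullTrivial}(1). Existence and stability of pullbacks of fibrations, and of trivial fibrations, is the combination of Proposition \ref{WrongCatF} (pullbacks of fibrations exist in $\Dman[\F]$, are fibrations, and remain fibrations-that-are-quasi-isomorphisms) together with Proposition \ref{PullTrivial}(2) (pullbacks of trivial fibrations are trivial fibrations); note that a trivial fibration is by definition a fibration that is a weak equivalence, and \ref{PullTrivial}(2) precisely guarantees that the pulled-back fibration, already known to be a quasi-isomorphism, is again almost-affine-locally trivial, i.e.\@ a weak equivalence. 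Finally, factorization of the diagonal $\Delta\colon\dgs\rightarrow\dgs\times\dgs$ into a weak equivalence followed by a fibration is Proposition \ref{DiaFa}. For the affine case $\Dmana[\F]$: the diagonal of an affine dg manifold has ambient scheme $\Spec(\dga[0])$, and the path object constructed in the proof of Proposition \ref{DiaFa} has ambient scheme $\cs\times\cs$, which for $\cs$ affine is again affine, so the factorization stays inside $\Dmana[\F]$; all the other axioms for $\Dmana[\F]$ are inherited by restricting the corresponding statements above to the full subcategory of affine dg manifolds, using that fibrations and weak equivalences between affine dg manifolds are detected inside $\Dman[\F]$ and that fiber products of affine schemes are affine.

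\medskip

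I expect no real obstacle here: the theorem is a bookkeeping assembly of Propositions \ref{WrongCatF}, \ref{DiaFa}, \ref{Simple}, and \ref{PullTrivial}. If there is a delicate point, it is only making sure that in the affine case $\Dmana[\F]$ the factorization of Proposition \ref{DiaFa} does not leave the subcategory --- which is handled by the observation in Remark \ref{GoodFactorization} that the ambient classical scheme of the path object is a product of the ambient schemes, hence affine when the inputs are.
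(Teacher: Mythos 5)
Your proof is correct and follows essentially the same route as the paper, which likewise dismisses the formal axioms as obvious and cites Propositions \ref{WrongCatF}, \ref{DiaFa} and \ref{PullTrivial} for pullbacks, factorization and the $2$-out-of-$3$ property. Your additional care that the factorization and the pullbacks stay inside $\Dmana[\F]$ (via Remark \ref{GoodFactorization} and the affineness of fiber products of affine schemes) is a point the paper leaves implicit, but it is the same argument.
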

\begin{proof} The only conditions that are not obvious are those concerning pullbacks, factorizations and the $2$-out-of-$3$ property. Propositions \ref{WrongCatF}, \ref{DiaFa}, \ref{PullTrivial} prove all these claims.\end{proof}

\begin{remark} Notice that $\Dman[\F]$ has another structure of a category of fibrant objects: the same fibrations, but quasi-isomorphisms instead of weak equivalences. These two structures coincide when restricted to $\Dmana[\F]$, since every quasi-isomorphism between affine dg manifolds is a weak equivalence.\end{remark}

We have 3 categories with distinguished sets of weak equivalences: $\op{\derca[\F]}$, $\Dmana[\F]\cong\op{\sderca[\F]}$ and $\Dman[\F]$. We would like to compare their simplicial localizations (\cite{DK1} \S4). Recall that simplicial localization of a category is a category enriched in $\sset$ that has the same set of objects and realizes the derived functor of universally inverting the weak equivalences. We will call the simplicial sets of morphisms in $\sset$-categories \ee{mapping spaces}, and denote them by $\mapis\lp-,-\rp$.

\begin{definition} (\cite{HAG1} Def.\@ 2.1.3) A $\sset$-functor $\mathcal F\colon\mathcal C\rightarrow\mathcal D$ between $\sset$-categories is \ee{homotopically full and faithful} if 
	\begin{eqn}\forall C_1,C_2\in\mathcal C,\quad
		\mapis[\mathcal C]\lp C_1,C_2\rp\longrightarrow
		\mapis[\mathcal D]\lp\mathcal F(C_1),\mathcal F(C_2)\rp\end{eqn} 
	is a weak equivalence in $\sset$.\end{definition}

\begin{theorem}\label{ManifoldsInclusion} Let $\loca{\Dmana[\F]}$, $\loca{\Dman[\F]}$ be the simplicial localizations. The $\sset$-functor $\loca{\Dmana[\F]}\rightarrow\loca{\Dman[\F]}$, given by $\Dmana[\F]\hookrightarrow\Dman[\F]$, is homotopically full and faithful.\end{theorem}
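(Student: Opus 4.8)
The plan is to use the fact, just established in Theorem~\ref{FibrantObjects}, that both $\Dmana[\F]$ and $\Dman[\F]$ are categories of fibrant objects, and then invoke the explicit description of mapping spaces in a category of fibrant objects via categories of cocycles (spans whose left leg is a trivial fibration), as in Cisinski's work following \cite{Brown73} and \cite{DK2}. Concretely, for $\dgs_1,\dgs_2\in\Dmana[\F]$ the mapping space $\mapis[{\loca{\Dmana[\F]}}]\lp\dgs_1,\dgs_2\rp$ is weakly equivalent to the nerve of the category whose objects are spans $\dgs_1\overset{\sim}\twoheadleftarrow\dgs\rightarrow\dgs_2$ with the left leg a trivial fibration in $\Dmana[\F]$, and similarly for $\Dman[\F]$. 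The functor on localizations is then the obvious one sending such a span to the same span viewed in $\Dman[\F]$. So the task reduces to showing this map of nerves of cocycle categories is a weak equivalence.

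First I would record that the inclusion $\Dmana[\F]\hookrightarrow\Dman[\F]$ is an \emph{exact functor} of categories of fibrant objects in Cisinski's sense: it preserves fibrations, trivial fibrations, the final object, and the pullbacks that exist (all of which is immediate from the local-on-the-domain character of fibrations and Propositions~\ref{WrongCatF}, \ref{PullTrivial}). By the general comparison criterion for such functors, it then suffices to check two conditions on the cocycle categories with fixed target $\dgs_2\in\Dmana[\F]$: (i) every object of $\Dman[\F]$ admitting a trivial fibration to $\dgs_2$ is weakly equivalent, \emph{over $\dgs_2$}, to one coming from $\Dmana[\F]$ — i.e.\@ the comparison functor on cocycle categories is homotopy-cofinal; and (ii) it is homotopically fully faithful on these cocycle categories, which amounts to being able to refine spans and homotopies between spans down into the affine world. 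Condition (ii) is formal once (i) holds, using that $\Dmana[\F]$ is closed under the relevant pullbacks inside $\Dman[\F]$.

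The real content is condition (i), and this is precisely where the quasi-projectivity hypothesis built into the definition of dg manifold is used. Given a trivial fibration $p\colon\dgs\rightarrow\dgs_2$ in $\Dman[\F]$ with $\dgs_2$ affine, $\dgs$ is in particular almost affine (Remark~\ref{Sim}), so by part~(2) of Proposition~\ref{Simple} there is an affine dg manifold $\spec{\ddgb{}}$ and a trivial fibration $\spec{\ddgb{}}\rightarrow\dgs$; composing gives a trivial fibration $\spec{\ddgb{}}\rightarrow\dgs_2$ in $\Dmana[\F]$, together with a map of cocycles over $\dgs_2$ from the affine one to the original. This exhibits the needed cofinality: every object of the big cocycle category receives a map from an object of the small one, compatibly with the projection to $\dgs_2$, so the nerves have the same homotopy type. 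One must then check the slightly more refined statement that the whole category of cocycles $\mathrm{C}(\dgs_1,\dgs_2)$ in $\Dman[\F]$ is computed, up to weak equivalence of nerves, by its full subcategory of cocycles with affine apex — again this follows by the factorization/Proposition~\ref{Simple}(2) machinery applied functorially along the diagram, together with $2$-out-of-$3$ (Proposition~\ref{PullTrivial}(1)).

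The main obstacle I expect is not any single hard lemma but the bookkeeping needed to make the cocycle-category comparison genuinely functorial and not merely objectwise: one needs that the replacement $\dgs\rightsquigarrow\spec{\ddgb{}}$ can be done compatibly across morphisms of cocycles, which in Cisinski's framework is handled by checking that the relevant subcategory is ``homotopy-final'' via a calculus of fractions / last-vertex argument rather than by constructing a literal functorial section. An alternative, perhaps cleaner route would be to appeal directly to the theorem of Cisinski (or Szumiło) that an exact functor between categories of fibrant objects which is ``homotopically surjective'' — every fibrant object is weakly equivalent to one in the image, here supplied verbatim by Proposition~\ref{Simple}(2) — and satisfies an Ore-type condition induces a DK-equivalence of hammock localizations, and then restrict that equivalence along the inclusion; I would likely present the argument in that packaged form to keep the simplicial combinatorics to a minimum.
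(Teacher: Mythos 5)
Your proposal follows essentially the same route as the paper: both compute the mapping spaces as nerves of cocycle categories and reduce the claim to left cofinality of $\fibra{\Dmana[\F]}{\dgs_1}\hookrightarrow\fibra{\Dman[\F]}{\dgs_1}$, with Remark \ref{Sim} and Proposition \ref{Simple}(2) supplying the affine replacement of any trivial fibration onto an affine target. The ``bookkeeping'' you defer is handled in the paper by proving each comma category $\lp\fibra{\Dmana[\F]}{\dgs_1}\rp/\dgs_2$ contractible via an adjunction between pullback along a chosen trivial fibration $\dgs\rightarrow\dgs_2$ with $\dgs$ affine and composition with it, the latter category having $\dgs$ as a final object.
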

\begin{proof} Both $\Dmana[\F]$ and $\Dman[\F]$ are categories of fibrant objects (with the same notions of fibrations and weak equivalences), hence they admit the right homotopy calculus of fractions (\cite{Low} Thm.\@ A.5). Therefore to compute (up to weak equivalences) the mapping spaces in simplicial localizations we can use cocycles: recall (\cite{Low} Def.\@ 3.1) that \ee{a cocycle} from $\dgs_1$ to $\dgs_3$ is a diagram $\dgs_1\leftarrow\dgs_2\rightarrow\dgs_3$ where $\dgs_1\leftarrow\dgs_2$ is a trivial fibration. A \ee{morphism between cocycles} is a commutative diagram
	\begin{eqn}\label{ACocycle}\begin{tikzcd} & \dgs_2\ar[ld]\ar[rd]\ar[dd] & \\
			\dgs_1 & & \dgs_3\\
			& \dgs'_2\ar[lu]\ar[ru] & \end{tikzcd}\n\end{eqn}
where $\dgs_2\rightarrow\dgs'_2$ is a weak equivalence. The mapping space from $\dgs_1$ to $\dgs_3$ in $\loca{\Dman[\F]}$ is weakly equivalent to the nerve of the category of cocycles from $\dgs_1$ to $\dgs_3$ (\cite{Low} Thm.\@ 3.12), similarly for $\loca{\Dmana[\F]}$. Moreover these weak equivalences are canonically given by the inclusion of cocycles into all possible hammocks (\cite{DK2} \S2). We will denote the categories of cocycles by $\cocy[{\Dman[\F]}]\lp\dgs_1,\dgs_3\rp$, $\cocy[{\Dmana[\F]}]\lp\dgs_1,\dgs_3\rp$.
	
	For any $\dgs_1,\dgs_3\in\Dmana[\F]$ there is an obvious inclusion 
	\begin{eqn}\label{AffineCocycles}\nerve{\cocy[{\Dmana[\F]}]\lp\dgs_1,\dgs_3\rp}\subset
		\nerve{\cocy[{\Dman[\F]}]\lp\dgs_1,\dgs_3\rp}\n\end{eqn}
	where $\nerve{-}$ stands for the nerve of a category. We claim that (\ref{AffineCocycles}) is a weak equivalence of simplicial sets. According to \cite{DK3} \S7.2, \S8.2 these simplicial sets are
	\begin{eqn}\hocolim[\dgs_2\in\fibra{\Dmana[\F]}{\dgs_1}]\;\Hom[{\Dmana[\F]}]\lp\dgs_2,\dgs_3\rp,\quad
		\hocolim[\dgs_2\in\fibra{\Dman[\F]}{\dgs_1}]\;\Hom[{\Dman[\F]}]\lp\dgs_2,\dgs_3\rp,\end{eqn}
	where $\fibra{\Dman[\F]}{\dgs_1}$, $\fibra{\Dmana[\F]}{\dgs_1}$ are the categories of trivial fibrations ending in $\dgs_1$, and the homotopy limits are computed in $\sset$. Therefore to prove that (\ref{AffineCocycles}) is a weak equivalence it is enough to show that the inclusion $\fibra{\Dmana[\F]}{\dgs_1}\hookrightarrow\fibra{\Dman[\F]}{\dgs_1}$ is left cofinal (\cite{BKan}, \S XI.9), i.e.\@ $\forall\dgs_2\in\fibra{\Dman[\F]}{\dgs_1}$ the nerve of $\lp\fibra{\Dmana[\F]}{\dgs_1}\rp/\dgs_2$ is contractible.
	
	According to Remark \ref{Sim} and Proposition \ref{Simple} $\forall\dgs_2\in\fibra{\Dman[\F]}{\dgs_1}$  $\exists\dgs\in\Dmana[\F]$ and a trivial fibration $\dgs\rightarrow\dgs_2$. Pulling back over $\dgs\rightarrow\dgs_2$ defines
	\begin{eqn}\lp\fibra{\Dmana[\F]}{\dgs_1}\rp/\dgs_2\longrightarrow
		\lp\fibra{\Dmana[\F]}{\dgs_1}\rp/\dgs,\end{eqn}
	which has a left adjoint (composition with $\dgs\rightarrow\dgs_2$). Thus the nerves of $\lp\fibra{\Dmana[\F]}{\dgs_1}\rp/\dgs_2$ and $\lp\fibra{\Dmana[\F]}{\dgs_1}\rp/\dgs$ are weakly equivalent, but the latter category has a final object -- $\dgs$ itself. Hence the nerve of $\lp\fibra{\Dmana[\F]}{\dgs_1}\rp/\dgs_2$ is contractible.\end{proof}

\section{Comparison with affine derived schemes}
Let $\simca[\F]$ be the category of simplicial commutative unital $\F$-algebras. In other words \@ $\simca[\F]$ is the category of functors from $\Delta^{op}$ to the category of associative commutative unital $\F$-algebras.
The opposite category will be called the category of \ee{affine derived schemes} over $\F$. \hide{For an object $\sima{C}\in\simca{\F}$
	\begin{eqn}\sima{C}=\{C_n,\,\{\partial_i:C_{n+1}\rightarrow C_n\}_{i=0}^{n+1},\,\{\sigma_j:C_n\rightarrow C_{n+1}\}_{j=0}^n\}_{n\geq 0}\end{eqn}
	the corresponding object in $\saff{\F}$ will be denoted by $\spec{\sima{C}}$.}

We consider $\simca[\F]$ equipped with the standard simplicial model structure from \cite{Qu67}, and $\saff[\F]$ with the corresponding opposite model structure. Dold--Kan correspondence  induces a Quillen equivalence $\simca[\F]\simeq\derca[\F]$ (\cite{DoldKan} page 289).\footnote{Recall that $\derca[\F]$ denotes the category of differential non-positively graded algebras.} Correspondingly we have a Quillen equivalence $\saff[\F]\simeq\op{\derca[\F]}$, and hence we will call objects of $\op{\derca[\F]}$ affine derived schemes as well.

\smallskip

We would like to compare simplicial localizations of $\op{\sderca[\F]}\cong{\Dmana[\F]}$ and $\op{\derca[\F]}$, where, as before, $\sderca[\F]$ denotes the full subcategory of $\derca[\F]$ consisting of $\dga$ with $\dga[0]$ being smooth over $\F$ and $\dga$ being almost free over $\dga[0]$. 

Since ${\derca[\F]}$ is a model category, with weak equivalences \hide{$\we[{\derca[\F]}]$}being quasi-isomorphisms and fibrations \hide{$\fib[{\derca[\F]}]$}being maps that are surjective in negative degrees, we can use simplicial and cosimplicial resolutions to compute mapping spaces (\cite{DK3} \S5, \cite{Hirsh} \S17). Given $\dgs,\dgs'$ in $\op{\derca[\F]}$, one constructs a cosimplicial object $\lf\dgs[n]\rf_{n\geq 0}\in\lp\op{\derca[\F]}\rp^\Delta$ with special properties (recalled below) and obtains 
\begin{eqn}	\mapis[\loca{\op{\derca[\F]}}]\lp\dgs,\dgs'\rp\simeq
	\lf\Hom[{\op{\derca[\F]}}]\lp\dgs[n],\dgs'\rp\rf_{n\geq 0},\end{eqn}
assuming that $\dgs'$ is fibrant in $\op{\derca[\F]}$. Moreover, these weak equivalences are canonically chosen, if one uses hammock localization for $\loca{\op{\derca[\F]}}$ (\cite{DK3} \S7). This theory does not give us a similar result within $\Dmana[\F]$ because the notions of fibrations in $\Dmana[\F]$ and $\op{\derca[\F]}$ are different: they are just smooth maps in the former case, while they need to be retracts of vector bundles in the latter case.

However, one can produce cosimplicial resolutions with respect to the model structure on $\op{\derca[\F]}$ completely within $\Dmana[\F]$, moreover, these resolutions are functorial. This statement is the contents of the following theorem, and later it will be essential for proving that the mapping spaces in $\loca{\op{\derca[\F]}}$ and in $\loca{\Dmana[\F]}$ are naturally weakly equivalent.	

\begin{theorem}\label{FunctorialResolutions} There is a functor 
	\begin{eqn}\reso\colon\Dmana[\F]\longrightarrow\Dmana[\F]^{\Delta}\end{eqn} 
	that, composed with $\Dmana[\F]^{\Delta}\hookrightarrow\lp\op{\derca[\F]}\rp^{\Delta}$, produces special cosimplicial resolutions (\cite{DK3} \S6.8) of objects of $\Dmana[\F]$ with respect to the model structure on $\op{\derca[\F]}$.\end{theorem}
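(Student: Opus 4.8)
The plan is to build the cosimplicial resolution $\reso(\dgs)$ by hand, degree by degree in the cosimplicial direction, while keeping everything inside $\Dmana[\F]$. Recall that for $\dgs=\spec{\dga}$ a special cosimplicial resolution in the sense of \cite{DK3} \S6.8 is a Reedy-cofibrant (in the model structure on $\op{\derca[\F]}$) cosimplicial object $\lf\dgs[n]\rf_{n\geq 0}$ together with a weak equivalence $\dgs[\bullet]\to c\lp\dgs\rp$ to the constant cosimplicial object, such that each coface composite out of the matching object is a cofibration; dually in $\derca[\F]$ this means each latching map $L_n\dga[\bullet]\to\dga[n]$ is a cofibration (a retract of an almost free morphism), and each $\dga[\bullet]\to\dga$ induces a quasi-isomorphism. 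First I would set up the coskeletal/latching machinery explicitly: since $\sderca[\F]$ is closed under pushouts and under adjoining finitely generated projective modules of generators in non-positive degrees (this was used already in Prop.\@ \ref{WrongCatF} and Prop.\@ \ref{DiaFa}), the latching objects $L_n\dga[\bullet]$ computed in $\derca[\F]$ land back in $\sderca[\F]$, so the whole construction has a chance of staying inside the subcategory.

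Second, I would carry out the actual resolution as an iterated ``killing cocycles'' construction analogous to Prop.\@ \ref{DiaFa}, but now in the cosimplicial direction. Start with $\dga[0]:=\dga$. Having built the $n$-truncated cosimplicial object, form the latching object $L_n\dga[\bullet]\in\sderca[\F]$; one needs a map $L_n\dga[\bullet]\to M_n\dga[\bullet]$ (matching object, which is just $\dga$ since the target is constant) and then factor it, functorially and inside $\sderca[\F]$, as a cofibration followed by a trivial fibration, taking $\dga[n]$ to be the intermediate object. The factorization is where the subtlety flagged in the Introduction bites: the standard functorial factorization in $\derca[\F]$ adjoins free generators indexed by \emph{all} cocycles, producing algebras with infinitely many generators in some degrees, which leaves $\sderca[\F]$. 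So instead I would use that $\dga[0]$ is of finite type and smooth and that $\dga$ is built from finitely many generators per degree to choose, at each stage, a \emph{finite} set of generators (coherent locally free, i.e.\@ finitely generated projective, modules) whose differentials/codegeneracies kill exactly the cocycles that obstruct the latching map being a cofibration and the augmentation being a quasi-isomorphism. Functoriality is then arranged by making these choices via a fixed functorial construction on the level of modules — e.g.\@ always taking the relevant module of cocycles itself as the module of new generators (it is finitely generated projective because we are over a smooth affine base and everything in sight is a perfect complex of modules over $\dga[0]$), rather than a free module on its elements.

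Third, I would verify the two defining properties of a special cosimplicial resolution: (a) Reedy cofibrancy, i.e.\@ each latching map $L_n\dga[\bullet]\to\dga[n]$ is a retract of an almost free morphism — this is immediate from the construction since at each step we literally adjoin a free graded algebra on a finitely generated projective module, hence the latching map is almost free on the nose; and (b) the augmentation $\dga[n]\to\dga$ is a quasi-isomorphism for every $n$ — this follows because at each stage we only adjoin generators together with the cocycles they bound, so the cohomology is unchanged, exactly as in the proof of Prop.\@ \ref{DiaFa} (one checks $\ho[0]$ is preserved and the relevant cotangent/cocycle complex is killed). One also needs that the resulting cosimplicial object is homotopically constant, equivalently that all codegeneracies are quasi-isomorphisms, which again holds since each is a quasi-isomorphism onto $\dga$ by construction and $2$-out-of-$3$. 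Finally, functoriality in $\dga$ of the whole assembly follows because every choice made — the module of new generators, the cofibration, the trivial fibration — was made by a fixed natural construction; a map $\dga\to\dga'$ in $\sderca[\F]$ induces compatible maps on cocycle modules and hence on each $\dga[n]$ and $\dga'[n]$, respecting cofaces and codegeneracies.

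The main obstacle, as the authors themselves announce, is precisely this functoriality-within-$\sderca[\F]$ constraint: one must perform a factorization that is simultaneously (i) functorial, (ii) valued in finitely-generated-projective-module-generated algebras, and (iii) a genuine (co)fibration--trivial-fibration factorization in the model structure on $\op{\derca[\F]}$. Reconciling (i) and (ii) is what forces one away from the naive free-on-all-cocycles factorization and into a careful, and evidently lengthy, piece of simplicial combinatorics tracking the interaction of latching objects across all cosimplicial degrees while only ever adjoining finitely many generators in each internal degree. I expect the bulk of the actual proof to be the bookkeeping showing that these finite choices can be made coherently across the whole cosimplicial diagram and remain compatible with all coface and codegeneracy maps.
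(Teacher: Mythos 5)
Your proposal correctly isolates the difficulty (a factorization that is simultaneously functorial, valued in algebras with finitely generated projective modules of generators, and a genuine factorization for the model structure on $\derca[\F]$), but the mechanism you offer for overcoming it fails, and it is not the mechanism of the paper. The false step is the claim that ``the relevant module of cocycles itself \ldots is finitely generated projective because we are over a smooth affine base and everything in sight is a perfect complex.'' For an almost free $\dga\in\sderca[\F]$ each graded piece $\dga[k]$ is indeed a finitely generated projective $\dga[0]$-module, but the submodule of cocycles is the kernel of a map of finitely generated projectives, and such kernels over a regular ring are finitely generated yet in general not projective (e.g.\@ the second Koszul syzygy of the residue field of a point on a smooth threefold has projective dimension $1$). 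So adjoining cocycle modules as generators leaves $\sderca[\F]$; repairing this by surjecting a locally free module onto the cocycles, as is done via \cite{Borelli67} in the proof of Proposition \ref{DiaFa}, destroys functoriality, which is the entire content of the theorem. The paper's construction avoids cocycles altogether: the generators adjoined in simplicial dimension $n+1$ come in \emph{acyclic} two-term complexes $\denbub[k]{n+1}\to\genbub[k+1]{n+1}$ of projectives, so that $\laspa{n+1}\to\ddga{n+1}$ is a trivial cofibration with no cohomology computation at all, and the modules $\denbub[k]{n+1}$ are defined canonically (hence functorially) as pullbacks of previously constructed finitely generated projective modules along the skeleton maps; the combinatorial heart of the proof is an explicit direct-sum decomposition, indexed by the faces of $\Delta_{n+1}$ and compatible with all face maps, showing that these pullbacks are again finitely generated projective.

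The second substantive gap concerns the matching object. You assert that $\maspa{n}$ ``is just $\dga$ since the target is constant''; it is not --- it is a limit of copies of $\ddga{n-1}$ and $\ddga{n-2}$ over the face maps --- and you also reverse the required factorization: the definition used in the paper demands that $\laspa{n}\to\ddga{n}\to\maspa{n}$ be a \emph{trivial cofibration followed by a fibration}, not a cofibration followed by a trivial fibration. With the matching object trivialized, the hardest verification in the paper --- that $\ddga{n+1}\to\maspa{n+1}$ is surjective in negative degrees --- disappears from your argument entirely, whereas in the paper it occupies the second half of the proof (a reduction to fibers over closed points of $\spe{\dga[0]}$ followed by an explicit analysis of products landing in the kernels $\kerma[n]$ of $\ddga{n}\to\maspa{n}$). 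In short, your diagnosis of why the naive functorial factorization fails is accurate, but the construction you propose neither stays inside $\sderca[\F]$ nor addresses the fibration condition, and these are precisely the two points the paper's lengthy combinatorial argument is built to handle.
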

\begin{proof} To avoid unnecessary dualization we work in $\derca[\F]$ instead of $\op{\derca[\F]}$, and construct special functorial \e{simplicial} resolutions of dg algebras. Given $\dga\in\sderca[\F]\cong\op{\Dmana[\F]}$ we produce a simplicial object\footnote{Here $\sigma_{n+1,i}\colon\oset{n}\hookrightarrow\oset{n+1}$ is the inclusion that misses $i$, with $\sigma^*_{n+1,i}$ being the corresponding \e{face map} $\ddga{n+1}\rightarrow\ddga{n}$, and $\tau_{n,j}\colon\oset{n+1}\rightarrow\oset{n}$ is the weakly order preserving surjection mapping $j,j+1\mapsto j$, with $\tau^*_{n,j}$ being the corresponding \e{degeneracy map} $\ddga{n}\rightarrow\ddga{n+1}$. We will sometimes suppress the indices in the notation of face and degeneracy maps.}
	\begin{eqn}\lf\ddga{n},\lf\sigma^*_{n+1,i}\rf_{0\leq i\leq n+1},\lf\tau^*_{n,j}\rf_{0\leq j\leq n}\rf_{n\geq 0}\end{eqn}
in $\sderca[\F]$ that is functorial in $\dga$ and has the properties of a special simplicial resolution of $\dga$ with respect to the model structure on $\derca[\F]$. 
	
The conditions that are required for a simplicial object to be a special simplicial resolution involve \ee{the matching objects} $\lf\maspa{n}\rf_{n\geq 0}$ and the \ee{latching objects} $\lf\laspa{n}\rf_{n\geq 0}$ (\cite{DK3} \S4.3, \S6.7, \cite{Hovey} Def.\@ 5.2.2). Recall that $\forall n\geq 1$ $\maspa{n}$ of $\lf\ddga{m}\rf_{m\geq 0}$ is the limit (computed in $\derca[\F]$) of the diagram of $n+1$ copies of $\ddga{n-1}$ (indexed by the $n-1$-faces of the $n$-simplex $\Delta_n$) and $\frac{n(n+1)}{2}$ copies of $\ddga{n-2}$ (indexed by the $n-2$-faces of $\Delta_n$), with the morphisms being the corresponding face maps $\ddga{n-1}\rightarrow\ddga{n-2}$. When $n=1$ we set $\ddga{-1}:=0$ -- the final object in $\derca[\F]$.
	
For each $n\geq 1$ the latching object $\laspa{n}$ of $\lf\ddga{m}\rf_{m\geq 0}$ is the colimit of $n$-copies of $\ddga{n-1}$ (indexed by $\lf\tau^*_{n-1,j}\rf_{0\leq j\leq n-1}$) and $\frac{n(n-1)}{2}$ copies of $\ddga{n-2}$ (indexed by the weakly order preserving surjections $\oset{n}\rightarrow\oset{n-2}$), with the morphisms being the corresponding degeneracy maps $\ddga{n-2}\rightarrow\ddga{n-1}$.
	
There is a natural morphism $\laspa{n}\rightarrow\maspa{n}$, and \ee{the conditions on the special simplicial resolutions} require that $\forall n\geq 0$ $\ddga{n}$ provides a factorization $\laspa{n}\longrightarrow\ddga{n}\longrightarrow\maspa{n}$ into a trivial cofibration followed by a fibration, where we set $\laspa{0}:=\dga$ and $\maspa{0}:=0$ (\cite{DK3} \S6.8). 
	
\smallskip
	
We construct $\reso[\dga]$ by induction on the simplicial dimension (the subscript in our notation). In simplicial dimension $0$ there are two conditions: we need to have a trivial cofibration $\dga\overset{\simeq}\longrightarrow\ddga{0}$ that is functorial in $\dga$, and $\ddga{0}$ has to be fibrant with respect to the model structure on $\derca[\F]$. Since every object in $\derca[\F]$ is fibrant we define
	\begin{eqn}\ddga{0}:=\dga,\quad\dga\overset{=}\longrightarrow\ddga{0}.\end{eqn}
Given $n\geq 0$ suppose we have constructed 
	\begin{eqn}\lf\ddga{m},\lf\sigma^*_{m+1,i}\rf_{0\leq i\leq m+1},\lf\tau^*_{m,j}\rf_{0\leq j\leq m}\rf_{0\leq m\leq n},\end{eqn} 
s.t.\@ $\forall m\leq n$ the unique degeneracy map $\ddga{0}\rightarrow\ddga{m}$ is isomorphic to
	\begin{eqn}\label{UniqueDeg}\ddga{0}\hooklongrightarrow\dga\underset{\dga[0]}\otimes
		\fral{\dga[0]}{\underset{k<0}\bigoplus\lp\denbu[k]{m}\overset{\delta}\rightarrow\genbu[k+1]{m}\rp},\n
	\end{eqn}
where $\denbu[k]{m}\overset{\delta}\rightarrow\genbu[k+1]{m}$ is an acyclic complex of finitely generated projective $\dga[0]$-modules with $\denbu[k]{m}$ sitting in degree $k$ and $\genbu[k+1]{m}$ in degree $k+1$, and $\fral{\dga[0]}{-}$ stands for the functor of the free dg commutative $\dga[0]$-algebra generated by a given complex. As part of our inductive assumption we require that (\ref{UniqueDeg}) is functorial in $\dga$.  It is clear that $\ddga{0}$ satisfies this requirement (in the trivial way). 
	
To perform the induction step we need to have a good way to compute matching objects. For each $0\leq j\leq n$ we write $\ske{n+1}{j}$ to mean \ee{the $j$-skeleton} of $\lf\ddga{m}\rf_{0\leq m\leq n}$, i.e.\@ the limit (computed in $\derca[\F]$) of the diagram of copies of $\ddga{j}$, $\ddga{j-1}$ one for each $j$-face, and $j-1$-face of $\Delta_{n+1}$ respectively, with the morphisms being all possible face maps. Clearly $\maspa{n+1}$ is just $\ske{n+1}{n}$, and we have a recursive relation among the skeletons: $\ske{n+1}{j}$ is the pullback (computed in $\derca[\F]$)
	\begin{eqn}\begin{tikzcd} \ske{n+1}{j} \ar[r]\ar[d] & \prod\ddga{j}\ar[d] \\
			\ske{n+1}{j-1}\ar[r] &  \prod\maspa{j},\end{tikzcd}\n\end{eqn}
where the direct products are indexed by the $j$-faces of $\Delta_{n+1}$.
	
For each $0\leq m\leq n$ we denote by 
	\begin{eqn}\underset{k<0}\bigoplus\;\henbu[k]{m}\subseteq\underset{k<0}\bigoplus\;\ddga[k]{m}\end{eqn}
the $\mathbb Z_{<0}$-graded $\dga[0]$-subalgebra generated by $\lf\ddga[k]{0}\oplus\denbu[k]{m}\oplus\genbu[k]{m}\rf_{k<0}$. It is clear that $\forall k<0$ the $\dga[0]$-module $\henbu[k]{m}$ is projective and finitely generated. Then $\forall k<0$ we define $\denbub[k]{n+1}$ to be the pullback of $\dga[0]$-modules
	\begin{eqn}\label{NewGenerators}\begin{tikzcd} \denbub[k]{n+1} \ar[r]\ar[d] & 
			\underset{0\leq i\leq n+1}\prod\henbu[k]{n}\ar[d] \\
			\ske{n+1}{n-1}\ar[r] &  \underset{0\leq i\leq n+1}\prod\maspa{n},
		\end{tikzcd}\n\end{eqn}
where the direct products are indexed by the $n$-faces of $\Delta_{n+1}$. Using the unique degeneracy map $\ddga{0}\rightarrow\laspa{n+1}$ we define
	\begin{eqn}\ddga{n+1}:=\laspa{n+1}\underset{\dga[0]}\otimes\fral{\dga[0]}{\underset{k<0}\bigoplus\lp\denbub[k]{n+1}\overset{\delta}\rightarrow\genbub[k+1]{n+1}\rp},\end{eqn}
where $\forall k<0$ $\denbub[k]{n+1}\overset{\delta}\rightarrow\genbub[k+1]{n+1}$ is an acyclic complex of $\dga[0]$-modules concentrated in degrees $k$ and $k+1$. It is obvious that $\ddga{n+1}$ is functorial in $\dga$. We would like to show that $\ddga{n+1}$ has the form (\ref{UniqueDeg}).\footnote{Notice that tensoring with $\laspa{n+1}$ introduces additional projective $\dga[0]$-modules as generators, hence the difference in notation between $\denbu[k]{n+1}$ and $\denbub[k]{n+1}$.} Thus we need to prove that each $\denbub[k]{n+1}$ is a finitely generated projective $\dga[0]$-module, and that $\laspa{n+1}\longrightarrow\ddga{n+1}\longrightarrow\maspa{n+1}$
is a trivial cofibration followed by a fibration (with respect to the model structure on $\derca[\F]$).
	
To prove the first claim we analyze the properties of $\lf\denbu[k]{m}\rf_{k<0}$ for $m\leq n$. When $m=1$ the pullback diagram (\ref{NewGenerators}) becomes $\forall k<0$
	\begin{eqn}\begin{tikzcd} \denbub[k]{1} \ar[r,"\cong"]\ar[d] & 
			\underset{0\leq i\leq 1}\prod\henbu[k]{0}=\underset{0\leq i\leq 1}\prod\ddga[k]{0}\ar[d] \\
			0\ar[r] &  \underset{0\leq i\leq 1}\prod 0.
	\end{tikzcd}\end{eqn}
This allows us to have decompositions
	\begin{eqn}\forall k<0\quad\denbub[k]{1}=\denbu[k]{1}=\underset{s\in\pset{0,1}}\bigoplus\denbu[k]{1,s},\quad
		\genbu[k]{1}=\underset{s\in\pset{0,1}}\bigoplus\genbu[k]{1,s}\end{eqn}
where $\pset{0,1}$ is the set of non-empty subsets of $\lf 0,1\rf$, $\denbu[k]{1,\lf 0\rf}$, $\denbu[k]{1,\lf 1\rf}$ are mapped isomorphically onto the copies of $\ddga[k]{0}$ indexed by the corresponding $0$-simplices in $\Delta_1$, and $\denbu[k]{1,\lf 0,1\rf}$ is the (trivial) kernel of $\denbu[k]{1}\rightarrow\ddga[k]{0}\oplus\ddga[k]{0}$. Then clearly we can have
	\begin{eqn}\forall k<0\quad\henbu[k]{1}=\underset{s\in\pset{0,1}}\bigoplus\henbu[k]{1,s}\end{eqn}
with $\henbu[k]{1,\lf 0\rf}$, $\henbu[k]{1,\lf 1\rf}$ projecting isomorphically onto the copies of $\henbu[k]{0}$, and $\henbu[k]{1,\lf 0,1\rf}$ being the kernel of $\henbu[k]{1}\rightarrow\henbu[k]{0}\,\oplus\,\henbu[k]{0}$. Now we make an addition to our inductive assumption and suppose that for each $0\leq m\leq n$ we have decompositions 
	\begin{eqn}\label{Decompositions}\forall k<0\quad\henbu[k]{m}=
		\underset{s\in\pset{0,1,\ldots,m}}\bigoplus\henbu[k]{m,s},\n\end{eqn}
s.t.\@ $\henbu[k]{m,\lf 0,1,\ldots,m\rf}$ is the kernel of $\henbu[k]{m}\rightarrow\maspa{m}$, and $\forall j<m$ and each order preserving inclusion $\sigma\colon\oset{j}\hookrightarrow\oset{m}$ we have isomorphisms
	\begin{eqn}\label{Isos}\forall k<0\quad 
		\sigma^*\colon\henbu[k]{m,\sigma\lp 0,\ldots,j\rp}\overset{\cong}\longrightarrow
		\henbu[k]{j,\oset{j}}.\n\end{eqn}
Then we obtain an isomorphism of $\dga[0]$-modules
	\begin{eqn}\label{InductiveDefinition}\forall k<0\quad
		\denbub[k]{n+1}\cong\underset{\underset{|s|<n+1}{s\in\pset{0,\ldots,n+1}}}\bigoplus
		\henbu[k]{|s|-1,\oset{|s|-1}},\n\end{eqn}
where $|s|$ denotes the number of elements in $s$. Using induction we obtain then that $\forall k<0$ the $\dga[0]$-module $\denbu[k]{n+1}$ is finitely generated and projective, and moreover $\henbu[k]{n+1}$ satisfies the additional assumption (\ref{Decompositions}). It follows immediately that $\laspa{n+1}\rightarrow\ddga{n+1}$ is a trivial cofibration in $\derca[\F]$.
	
	\smallskip
	
It remains to show that $\ddga{n+1}\rightarrow\maspa{n+1}$ is a fibration in $\derca[\F]$ i.e.\@ it is surjective in negative degrees. We notice that $\forall k<0$ the $\dga[0]$-module $\ddga[k]{n+1}$ is graded by the powers of $\genbu[0]{n+1}$ (with the degree $0$ component being $\henbu[k]{n+1}$), similarly for $\ddga[k]{n}$. Moreover $\ddga[k]{n+1}\rightarrow\maspa[k]{n+1}$ is filtered with respect to these gradings.\footnote{This morphism is only filtered, not necessarily graded, since an element of $\genbu[0]{n+1}$ can be mapped to $\dga[0]\subseteq\ddga[0]{n}$.} So it is enough to check surjectivity on the fibers at the closed points of $\spe{\dga[0]}$, i.e.\@ we can assume that $\dga[0]=\F$.
	
Recall that $\maspa{n+1}$ can be seen as a pullback of dg algebras
	\begin{eqn}\begin{tikzcd} \maspa{n+1} \ar[r]\ar[d] & 
			\underset{0\leq i\leq n+1}\prod\ddga{n}\ar[d] \\
			\ske{n+1}{n-1}\ar[r] &  \underset{0\leq i\leq n+1}\prod\maspa{n},
	\end{tikzcd}\end{eqn}
where the direct products are indexed by the $n$-simplices in $\Delta_{n+1}$. The kernel of the right vertical arrow is the product of $n+2$ copies of the kernel $\kerma[n]$ of $\ddga{n}\rightarrow\maspa{n}$. We claim that $\forall n\geq 1$ and for any $n$-face of $\Delta_{n+1}$ the corresponding summand
	\begin{eqn}\label{KerSummand}0\oplus\ldots\oplus\kerma[n]\oplus\ldots\oplus 0\subseteq
		\underset{0\leq i\leq n+1}\prod\ddga{n}\n\end{eqn}
is in the image of $\ddga{n+1}\rightarrow\maspa{n+1}$. The assumption $\dga[0]=\F$ implies that $\forall m\geq 1$ we can introduce $\oh{\genbu[0]{m}}=\underset{s\in\oset{m}}\bigoplus\oh{\genbu[0]{m,s}}$ having the same properties as in (\ref{Isos}), and s.t.\@ $\ddga[0]{m}\cong\F\oplus\oh{\genbu[0]{m}}$.\footnote{If $1\in\F$ is not a coboundary in the fiber of $\dga$ over the closed point in $\spe{\dga[0]}$, this is true also for $m=0$.}  Hence to prove the claim it is enough to show that $\forall\alpha\in\oh{\genbu[0]{n}}$, $\forall\beta\in\underset{k<0}\bigoplus\;\ddga[k]{n}$, if $\alpha\cdot\beta\in\kerma[n]$, then $0+\ldots+\alpha\cdot\beta+\cdots+0$ is in the image of $\ddga{n+1}\rightarrow\maspa{n+1}$. We have
	\begin{eqn}\alpha=\underset{s\in\pset{\oset{n}}}\sum\alpha_s,\quad
		\beta=\underset{s'\in\pset{\oset{n}}}\sum\beta_{s'},\end{eqn}
where $\alpha_s\in\oh{\genbu[0]{n,s}}$, $\beta_{s'}\in\ddga[k]{n,s'}$, and $\ddga[k]{n}=\underset{s'\in\pset{\oset{n}}}\bigoplus\ddga[k]{n,s'}$ is a decomposition as in (\ref{Decompositions}). The assumption that $\alpha\cdot\beta\in\kerma[n]$ implies that, if $s\cup s'\neq\lf 0,\ldots,n+1\rf$, then at least one of $\alpha_s$, $\beta_{s'}$ is $0$. Now we define
	\begin{eqn}\w{\alpha}:=\underset{s\in\pset{0,\ldots,n+1}}\sum\;\underset{1\leq j\leq n+1-|s|}\sum\alpha_s,\quad
		\w{\beta}:=\underset{s'\in\pset{0,\ldots,n+1}}\sum\;\underset{1\leq j\leq n+1-|s'|}\sum\beta_{s'},\end{eqn}
where the internal sums are over all the $n$-faces of $\Delta_{n+1}$ that contain $s$ (or $s'$ respectively). By construction $\w{\alpha}$, $\w{\beta}$ are in the image of $\ddga{n+1}\rightarrow\maspa{n+1}$, and one immediately obtains that $\w{\alpha}\cdot\w{\beta}=\alpha\cdot\beta$. 
	
Having established that (\ref{KerSummand}) is in the image of $\ddga{n+1}\rightarrow\maspa{n+1}$, we divide by the sum of all copies of $\kerma[n]$, obtaining $\underset{0\leq i\leq n+1}\prod\maspa{n}$. Now we need to prove that in negative degrees $\ddga{n+1}$ maps surjectively onto the image of 
	\begin{eqn}\label{OneDown}\ske{n+1}{n-1}\longrightarrow\underset{0\leq i\leq n+1}\prod\maspa{n}\n\end{eqn}
Again we use induction on $n$. Presenting each copy of $\maspa{n}$ as a pullback
	\begin{eqn}\begin{tikzcd} \maspa{n} \ar[r]\ar[d] & 
			\underset{0\leq i\leq n}\prod\ddga{n-1}\ar[d] \\
			\ske{n}{n-2}\ar[r] &  \underset{0\leq i\leq n}\prod\maspa{n-1}
	\end{tikzcd}\end{eqn}
we apply the same technique of looking at the kernels of the fibration $\ddga{n-1}\rightarrow\maspa{n-1}$ and proving that each copy is in the image of (\ref{OneDown}). Continuing like this we arrive at the image of 
	\begin{eqn}\label{FinalStep}\ske{n+1}{0}=\underset{0\leq i\leq n+1}\prod\ddga{0}\longrightarrow
		\underset{0\leq i\leq n+1}\prod\maspa{n}.\n\end{eqn} 
Since $\genbu[0]{0}=0$, (\ref{InductiveDefinition}) immediately implies that in negative degrees $\ddga{n+1}$ maps surjectively onto the image of (\ref{FinalStep}). \end{proof}

\begin{remark}\label{Retract} Notice that $\forall\dgs\in\Dmana[\F]$ the component $\dgs[0]$ of $\reso[\dgs]=\lf\dgs[n]\rf_{n\geq 0}$ is $\dgs$ itself. Let $\catpo$ be the category with only one object and only the identity endomorphism. Let $\iota\colon\catpo\hookrightarrow\Delta$ be the inclusion as the dimension $0$ object. Then the composite functor
	\begin{eqn}\Dmana[\F]\overset{\reso}\longrightarrow\lp\Dmana[\F]\rp^{\Delta}
		\overset{\iota^*}\longrightarrow\Dmana[\F]\end{eqn}
is the identity.\end{remark}

Now we would like to use the functorial cosimplicial resolution $\reso$ constructed in Theorem \ref{FunctorialResolutions} to show that the mapping spaces in the simplicial localizations of $\Dmana[\F]$ and $\op{\derca[\F]}$ are naturally weakly equivalent. To do this we would like to realize $\reso$ as an endofunctor on $\Dmana[\F]$. Obviously this cannot be done on the nose, so we need to produce a category equivalent to $\Dmana[\F]$ that allows for more space.

By adjunction $\reso$ corresponds to $\Dmana[\F]\times\Delta\rightarrow\Dmana[\F]$. We would like to substitute the codomain of this functor with an equivalent category. Let $\I$ be the category having $\noneg$ as objects, and $\forall m,n\in\noneg$ having exactly one morphism ${m}\rightarrow{n}$. It is clear that the unique $\I\rightarrow\catpo$ together with the inclusion $\catpo\hookrightarrow\I$ as ${0}$ constitute an equivalence of categories. We denote $\dmana[\F]:=\Dmana[\F]\times\I$ and then define $\sreso\colon\Dmana[\F]\times\Delta\longrightarrow\dmana[\F]$
	\begin{eqn}\forall n\geq 0\quad\sreso[\dgs,\lf 0,\ldots,n\rf]:=\lp\dgs[n],n\rp,\end{eqn}
where $\lf\dgs[n]\rf_{n\geq 0}=\reso[\dgs]$. Clearly $\sreso$ is injective on objects. In general $\sreso$ is not faithful, but, as the following Lemma shows, when it matters to us $\sreso$ is faithful. Recall that given $\dgs\in\Dmana[\F]$ the category $\fibra{\Dmana[\F]}{\dgs}$ consists of trivial fibrations $\dgs'\rightarrow\dgs$ (as defined within $\Dmana[\F]$) and weak equivalences between them.

\begin{lemma} Let $\dgs\in\Dmana[\F]$ be s.t.\@ the corresponding $\dga\in\sderca[\F]$ has at least one non-zero element of negative degree. Then
	\begin{eqn}\reso\colon\lp\fibra{\Dmana[\F]}{\dgs}\rp\times\Delta\longrightarrow\fibra{\Dmana[\F]}{\dgs}
	\end{eqn}
	is a faithful functor. Consequently $\sreso\colon\fibra{\Dmana[\F]}{\dgs}\times\Delta\longrightarrow\fibra{\dmana[\F]}{\dgs}$ is an inclusion of a subcategory.\end{lemma}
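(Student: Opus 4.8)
The plan is to establish faithfulness of the bifunctor $\reso\col\lp\fibra{\Dmana[\F]}{\dgs}\rp\times\Delta\rightarrow\fibra{\Dmana[\F]}{\dgs}$ obtained by adjunction from the functor of Theorem \ref{FunctorialResolutions}; since $\I$ has a unique morphism between any two of its objects, this is equivalent to faithfulness of $\sreso$, and the displayed ``inclusion of a subcategory'' assertion then follows at once with the injectivity of $\sreso$ on objects already noted. So one fixes $\dgs_1,\dgs_2\in\fibra{\Dmana[\F]}{\dgs}$, objects $\oset m,\oset n\in\Delta$, and two morphisms $(w_a,\theta_a)\col(\dgs_1,\oset m)\rightarrow(\dgs_2,\oset n)$, $a=1,2$, whose images agree, $g:=\reso(w_1,\theta_1)=\reso(w_2,\theta_2)\col\dgs_{1,m}\rightarrow\dgs_{2,n}$, where $\lf\dgs'_k\rf_{k\geq 0}=\reso[\dgs']$, and one must reconstruct $w_a$ and $\theta_a$ from $g$.

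\emph{Step 1: reconstructing $w$.} The construction of Theorem \ref{FunctorialResolutions} keeps $\dgs'_0=\dgs'$ (Remark \ref{Retract}), and the canonical degeneracy $\dgs'_k\rightarrow\dgs'$ — dual to the unique map of \eqref{UniqueDeg} — is a \emph{split} epimorphism in $\Dmana[\F]$, a section being dual to the dg-algebra retraction killing all new generators. These maps assemble into a natural transformation $\epsilon\col\reso\Rightarrow\mathrm{const}$: naturality in $\dgs'$ is functoriality of $\reso$, and naturality in $\oset k\in\Delta$ is the identity $\theta^{*}\circ\rho^{*}=(\rho\circ\theta)^{*}$ for $\rho\col\oset k\rightarrow\oset 0$ the unique map, i.e. every simplicial operator restricts to the canonical inclusion on $\ddga{0}$. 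Writing $g=\reso[\dgs_2](\theta_a)\circ\reso(w_a)_{\oset m}$ and post-composing with $\epsilon_{\dgs_2,\oset n}$, both naturalities give $\epsilon_{\dgs_2,\oset n}\circ g=w_a\circ\epsilon_{\dgs_1,\oset m}$; since $\epsilon_{\dgs_1,\oset m}$ has a section it is right-cancellable, so $w_1=w_2=:w$.

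\emph{Step 2: reconstructing $\theta$.} With $w$ fixed, the aim is to read $\theta_a$ off the action of $g^{*}=\reso(w)_{\oset m}^{*}\circ(\theta_a^{*})_{\dga_2}$ on the distinguished generators of the simplicial dg algebra $\reso[\dga_2]$. Here the hypothesis is used to guarantee that $\reso[\dga_2]$ (as well as $\reso[\dgs]$ and each $\reso[\dgs_1]$ entering the argument) is a \emph{non-degenerate} simplicial object: from a nonzero negative-degree element one obtains, via the inductive description \eqref{Decompositions}, \eqref{InductiveDefinition} together with the compatibilities \eqref{Isos}, a nonzero summand $\denbu[k_0]{n,s}\subseteq\ddga[k_0]{n}$ for every nonempty $s\subseteq\oset n$ — a non-degenerate $(|s|{-}1)$-simplex lying in the $s$-face of $\Delta_n$ — so distinct simplicial operators act distinctly on these generators. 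For fixed $s$, a face map annihilates $\denbu[k_0]{n,s}$ exactly when its image omits an element of $s$ (the kernel of the matching map lies in the kernel of every face map), which recovers the mono part $\sigma$ of the epi--mono factorization $\theta_a=\sigma\tau$; and $\tau$ is recovered from how $(\theta_a^{*})_{\dga_2}$ spreads $\denbu[k_0]{n,\oset n}$ over the degenerate generators of $\ddga{m}$. Collating over all $s$ pins down $\theta_a$, and one checks that this data survives $\reso(w)_{\oset m}^{*}$ because the generators involved are built functorially from classes on which $w^{*}$ agrees with the structure map into $\dgs$, and $\reso(w)^{*}$ respects the subset-decomposition; hence $\theta_1=\theta_2$ is visible in $g$.

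\emph{Main obstacle.} Step 1 and the reduction to non-degeneracy are formal. The real work — and the reason such an argument is long — is the combinatorial bookkeeping in Step 2: one must check that the subset-indexed decomposition of the matching and latching objects is genuinely natural in the simplicial direction, so that faces and degeneracies act on the generators $\denbu[k_0]{n,s}$ precisely as predicted, and that nothing is collapsed when descending along $\reso(w)$ from $\reso[\dga_2]$ to $\reso[\dga_1]$ (whose algebra need only be quasi-isomorphic, not isomorphic, to that of $\dgs_2$). This amounts to exactly the careful manipulation of the $\sigma^{*}$-isomorphisms \eqref{Isos} and of the recursive structure of the skeletons performed in the proof of Theorem \ref{FunctorialResolutions}.
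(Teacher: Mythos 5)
Your Step 1 is correct and is essentially the paper's first step: the unique degeneracy $\ddgb{0}\hookrightarrow\ddgb{m}$ is natural in both variables (naturality in the simplicial direction is just functoriality of the simplicial object, since every $\oset{l}\to\oset{m}$ composes to the unique map to $\oset{0}$) and is injective, so composing $\reso[\phi,\psi]$ with it recovers $\phi$. The split-epi/retraction discussion is not needed for this; a monomorphism suffices.

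Step 2 is where the gap lies: it is a plan whose essential verifications are explicitly deferred, and some of its assertions are not correct as stated. You claim a nonzero summand $\denbu[k_0]{n,s}$ in a \emph{fixed} degree $k_0$ for \emph{every} nonempty $s$; this fails already at level $1$, where the paper notes that $\denbu[k]{1,\lf 0,1\rf}$ is the trivial kernel. More seriously, recovering the epi part $\tau$ of $\theta$ requires knowing how \emph{degeneracies} act on the subset-indexed decomposition, and that is exactly where the construction is delicate: the paper's own footnote warns that the structure maps are only \emph{filtered}, not graded, with respect to the generators, so they do not act ``precisely as predicted,'' and the isomorphisms of type (\ref{Isos}) established in Theorem \ref{FunctorialResolutions} concern face maps only. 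The paper sidesteps all of this with one observation you miss: a morphism in $\Delta$ is determined by its composites with vertex inclusions, so if $\psi\neq\psi'$ one picks $i$ with $j:=\psi(i)\neq\psi'(i)=:j'$, composes the (equal) images under $\reso$ with the face map for $\lf i\rf$, and evaluates on a single element $\beta\in\henbu[k]{m,\lf j\rf}$ chosen so that the face map for $\lf j\rf$ sends $\beta$ to an element $\alpha$ pulled back from a nonzero negative-degree element of $\dga$ (this is where the hypothesis enters, and it forces $\phi(\alpha)\neq 0$ because $\phi$ is a morphism over $\dga$), while the face map for $\lf j'\rf$ kills $\beta$. This uses only the singleton components of the decomposition and only face maps to level $0$ --- no epi--mono factorization, no degeneracy analysis, and no descent of generator data along $\reso(w)^{*}$. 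Either carry out the combinatorial verifications you defer (which are not already contained in the proof of Theorem \ref{FunctorialResolutions}) or replace Step 2 by this reduction to vertices.
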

\begin{proof} Let $\dgs',\dgs''\in\fibra{\Dmana[\F]}{\dgs}$ and let $\ddgb{},\ddgc{}$ be the corresponding dg algebras of functions. Since $\dgs'\rightarrow\dgs$ is a fibration in $\Dmana[\F]$, there are $k<0$ and $0\neq\alpha\in\ddgb[k]{}$, s.t.\@ $\alpha$ is in the image of $\dga[k]\rightarrow\ddgb[k]{}$. Let $\ddgb{\bullet}:=\reso[\ddgb{}]$, $\ddgc{\bullet}:=\reso[\ddgc{}]$. Given $m,n\in\noneg$, two parallel morphisms in $\lp\fibra{\Dmana[\F]}{\dgs}\rp\times\Delta$ are pairs
	\begin{eqn}\lp\phi,\psi\rp,\lp\phi',\psi'\rp\colon\lp\ddgb{},\oset{m}\rp\longrightarrow
		\lp\ddgc{},\oset{n}\rp,\end{eqn}
where $\phi,\phi'\colon\ddgb{}\rightarrow\ddgc{}$ correspond to morphisms $\dgs''\rightarrow\dgs'$ in $\fibra{\Dmana[\F]}{\dgs}$, and $\psi,\psi'\colon\oset{n}\rightarrow\oset{m}$ are weakly order preserving maps.  Suppose $\reso[\phi,\psi]=\reso[\phi',\psi']$. Since there are unique degeneracy maps $\ddgb{0}\rightarrow\ddgb{m}$, $\ddgc{0}\rightarrow\ddgc{n}$ we immediately conclude that $\phi=\phi'$.
	
Suppose there is $i\in\oset{n}$ s.t.\@ $j:=\psi(i)\neq\psi'(i)=:j'$. Using the notation from the proof of Theorem \ref{FunctorialResolutions}, let $\beta\in\henbu[k]{m,\lf j\rf}\subset\ddgb[k]{m}$ be s.t.\@ it is mapped to $\alpha$ by the face map corresponding to $\iota\colon\lf j\rf\hookrightarrow\oset{m}$. By construction the face map corresponding to $\iota'\colon\lf j'\rf\hookrightarrow\oset{m}$ maps $\beta\mapsto 0$. Now let $\w{\psi}$, $\w{\psi'}$ be the compositions of $\psi$ and $\psi'$ with $\lf i\rf\subseteq\oset{n}$. We have
	\begin{eqn}\reso[\phi,\iota^*]=\reso[\phi,\w{\psi}]=\reso[\phi,\w{\psi'}]=\reso[\phi,\iota'^*],\end{eqn}
arriving at a contradiction, since the leftmost term equals $\phi\lp\alpha\rp\in\ddgc[k]{}$, while the rightmost term is $0$.\end{proof}

Having realized $\sreso$ as an inclusion of a subcategory we can compare the mapping spaces in $\loca{\Dmana[\F]}$ and $\loca{\op{\derca[\F]}}$. The key here is functoriality of $\sreso$, that allows us to reduce the computation of mapping spaces in $\loca{\Dmana[\F]}$ to spaces of maps out of cosimplicial resolutions, which are weakly equivalent to mapping spaces computed in $\loca{\op{\derca[\F]}}$.

\begin{theorem}\label{AlgebrasInclusion} Let $\loca{\Dmana[\F]}$, $\loca{\op{\derca[\F]}}$ be the simplicial localizations. The $\sset$-functor $\loca{\Dmana[\F]}\rightarrow\loca{\op{\derca[\F]}}$, given by $\Dmana[\F]\hookrightarrow\op{\derca[\F]}$, is homotopically full and faithful.\end{theorem}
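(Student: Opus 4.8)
The plan is to mimic the proof of Theorem~\ref{ManifoldsInclusion}: use the fact that both $\Dmana[\F]$ and $\op{\derca[\F]}$ admit a homotopy calculus of fractions, compute the mapping spaces via (co)cycles or, equivalently, via (co)simplicial resolutions, and then exhibit a cofinality/contractibility statement that makes the comparison map a weak equivalence. The crucial input is Theorem~\ref{FunctorialResolutions}: the functor $\reso$ produces, within $\Dmana[\F]$, special cosimplicial resolutions with respect to the model structure on $\op{\derca[\F]}$. By the Dwyer--Kan machinery (\cite{DK3} \S6, \S7), for any $\dgs,\dgs'\in\Dmana[\F]$, using $\reso[\dgs]$ as a cosimplicial resolution of $\dgs$ and the (automatic) fibrancy of $\dgs'$ in $\op{\derca[\F]}$, one has a canonical weak equivalence
\begin{eqn}\mapis[{\loca{\op{\derca[\F]}}}]\lp\dgs,\dgs'\rp\;\simeq\;
\lf\Hom[{\op{\derca[\F]}}]\lp\reso[\dgs]_n,\dgs'\rp\rf_{n\geq 0}
\;=\;\lf\Hom[{\Dmana[\F]}]\lp\reso[\dgs]_n,\dgs'\rp\rf_{n\geq 0},\end{eqn}
where the last equality holds because each $\reso[\dgs]_n$ lies in $\Dmana[\F]$ and the morphisms in question are morphisms of dg schemes. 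So the right-hand side is a simplicial set built entirely inside $\Dmana[\F]$; it remains to identify it (naturally) with $\mapis[{\loca{\Dmana[\F]}}]\lp\dgs,\dgs'\rp$.

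For that I would use the description of mapping spaces in $\loca{\Dmana[\F]}$ via cocycles as recalled in the proof of Theorem~\ref{ManifoldsInclusion}: since $\Dmana[\F]$ is a category of fibrant objects (Theorem~\ref{FibrantObjects}), $\mapis[{\loca{\Dmana[\F]}}]\lp\dgs,\dgs'\rp$ is weakly equivalent to $\nerve{\cocy[{\Dmana[\F]}]\lp\dgs,\dgs'\rp}$, and by \cite{DK3} \S7.2, \S8.2 the latter is $\hocolim$ over $\fibra{\Dmana[\F]}{\dgs}$ of $\Hom[{\Dmana[\F]}]\lp-,\dgs'\rp$. Dually, $\lf\Hom[{\Dmana[\F]}]\lp\reso[\dgs]_n,\dgs'\rp\rf_{n\geq 0}$ is the realization of the cosimplicial-resolution description of the same mapping space, and one must check these two models agree. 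Here is where the functoriality of $\reso$, together with the faithfulness Lemma realizing $\sreso$ as an inclusion of a subcategory $\fibra{\Dmana[\F]}{\dgs}\times\Delta\hookrightarrow\fibra{\dmana[\F]}{\dgs}$, does the work: one shows that the image of $\Delta$ under $\reso[\dgs]_\bullet$ is a (homotopy) cofinal subcategory of $\fibra{\Dmana[\F]}{\dgs}$ — every trivial fibration $\dgs''\to\dgs$ receives a trivial fibration from some $\reso[\dgs]_n$ (via Proposition~\ref{Simple}(2) and the special-resolution property that $\laspa{n}\to\reso[\dgs]_n$ is a trivial cofibration), and the relevant comma categories are contractible by the adjunction argument already used in Theorem~\ref{ManifoldsInclusion}. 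Consequently the $\hocolim$ defining the cocycle nerve computes the same homotopy type as the cosimplicial-resolution formula, naturally in $\dgs$ and $\dgs'$.

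Combining the two chains of natural weak equivalences gives
\begin{eqn}\mapis[{\loca{\Dmana[\F]}}]\lp\dgs,\dgs'\rp\;\simeq\;
\lf\Hom[{\Dmana[\F]}]\lp\reso[\dgs]_n,\dgs'\rp\rf_{n\geq 0}\;\simeq\;
\mapis[{\loca{\op{\derca[\F]}}}]\lp\dgs,\dgs'\rp,\end{eqn}
and one checks that the composite is (homotopic to) the map induced by the inclusion $\Dmana[\F]\hookrightarrow\op{\derca[\F]}$ — this follows from the Dwyer--Kan compatibility of the hammock localization with the resolution model (\cite{DK3} \S7) and Remark~\ref{Retract}, which pins down that $\reso[\dgs]_0=\dgs$ so the natural transformation $\dgs\to\reso[\dgs]_\bullet$ is the canonical cosimplicial-resolution map. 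The main obstacle is precisely the middle step: verifying that the image of $\reso$ is cofinal in $\fibra{\Dmana[\F]}{\dgs}$ and that the resulting two homotopy-colimit models of the mapping space coincide naturally — this requires carefully matching the simplicial combinatorics of $\reso$ (the latching/matching object analysis from Theorem~\ref{FunctorialResolutions}) against the cocycle calculus of fractions, and is the reason one needed $\reso$ to be functorial (rather than merely object-wise) in the first place. Everything else is a formal consequence of the category-of-fibrant-objects structure established in Section~\ref{DGMan} and the standard Dwyer--Kan identifications.
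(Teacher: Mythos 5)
Your overall strategy --- feed Theorem \ref{FunctorialResolutions} into the Dwyer--Kan cosimplicial-resolution formula for mapping spaces in $\op{\derca[\F]}$, then match that against the cocycle-nerve model of mapping spaces in $\Dmana[\F]$ --- is the strategy of the paper, and you correctly locate the crux in identifying the two homotopy-colimit models. But the mechanism you propose for that identification does not work. You assert that $n\mapsto\reso[\dgs]_n$ is homotopy cofinal in $\fibra{\Dmana[\F]}{\dgs}$, in particular that every trivial fibration $\dgs''\rightarrow\dgs$ of $\Dmana[\F]$ receives a map over $\dgs$ from some $\reso[\dgs]_n$. On the algebra side this asks for a morphism $\ddgb{}\rightarrow\ddga{n}$ under $\dga$, where $\dga\rightarrow\ddgb{}$ is the given trivial fibration of $\Dmana[\F]$. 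No lifting argument produces such a map: $\dga\rightarrow\ddgb{}$ is only smooth in degree $0$, not a cofibration of $\derca[\F]$, so one cannot lift the trivial cofibration $\dga\rightarrow\ddga{n}$ across it; if $\ddgb[0]{}$ is a nontrivial smooth extension of $\dga[0]$ admitting no retraction into a polynomial extension of $\dga[0]$, the relevant comma category can be empty. This is precisely the mismatch between the two notions of fibration that makes the theorem nontrivial, and neither Proposition \ref{Simple}(2) nor the adjunction trick from Theorem \ref{ManifoldsInclusion} (which pulls back along a trivial fibration, a different situation) repairs it.

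The paper avoids this by never discarding the $\fibra{\Dmana[\F]}{\dgs}$ factor: it embeds the whole product $\lp\fibra{\Dmana[\F]}{\dgs}\rp\times\Delta$ as a subcategory $\mathfrak D\subset\op{\derca[\F]}/\dgs$ via $\sreso$, forms $\mathfrak D\ltimes\mathfrak F$ with $\mathfrak F$ the trivial fibrations onto $\dgs$ \emph{in the model structure on} $\op{\derca[\F]}$, and proves that $\mathfrak D\hookrightarrow\mathfrak D\ltimes\mathfrak F$ is left cofinal. The contractibility of $\nerve{\mathfrak D/\dgs''}$ for $\dgs''\in\mathfrak F$ is where functoriality of $\reso$ enters: the homotopy colimit is computed in two steps (first over $\Delta$, then over $\fibra{\Dmana[\F]}{\dgs}$), and the inner step uses that $\Hom[\op{\derca[\F]}/\dgs]\lp\reso[\dgs'''],\dgs''\rp\simeq\Hom[\op{\derca[\F]}/\dgs]\lp\reso[\dgs'''],\dgs\rp$ because mapping a cosimplicial resolution into a model-categorical trivial fibration is a weak equivalence (\cite{DK3} \S 6.2) --- a statement that is simply unavailable for the trivial fibrations of $\Dmana[\F]$ you want to hit. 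The conclusion is then assembled from a fibrant-replacement functor $\mathfrak R\colon\mathfrak D\rightarrow\mathfrak F$, the resulting homotopy between the two inclusions into $\nerve{\mathfrak D\ltimes\mathfrak F}$, and a two-out-of-six argument, with Remark \ref{Retract} pinning down compatibility with the inclusion functor. So the gap is concrete: your cofinality claim is the wrong statement, and without replacing it by the $\mathfrak D\ltimes\mathfrak F$ construction (or an equivalent device) the two models of the mapping space are not identified.
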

\begin{proof} We need to show that given $\dgs,\dgs'\in\Dmana[\F]$ the map
	\begin{eqn}\mapis[\loca{\Dmana[\F]}]\lp\dgs,\dgs'\rp\longrightarrow
		\mapis[\loca{\op{\derca[\F]}}]\lp\dgs,\dgs'\rp\end{eqn}
is a weak equivalence of simplicial sets. Since in both categories the homotopy types of mapping spaces are invariant with respect to weak equivalences between domains and between codomains, in each case we can choose any representative from a given weak equivalence class of objects. We choose $\dgs$, $\dgs'$ so that their dg algebras of functions $\ddga{}$, $\ddgb{}$ are almost free dg algebras, with $\ddga{}$ having at least one non-zero element of negative degree. 
	
To compute $\mapis[\loca{\Dmana[\F]}]\lp\dgs,\dgs'\rp$ we can use the category of cocycles $\cocy[{\Dmana[\F]}]\lp\dgs,{\dgs'}\rp$, i.e.\@ we can take the nerve of $\lp\fibra{\Dmana[\F]}{\dgs}\rp/\lifto{\dgs'}$, where $\lifto{\dgs'}:=\dgs\times\dgs'$. Equivalently we can use $\lp\fibra{\dmana[\F]}{\dgs}\rp/\lifto{\dgs'}$. Within $\fibra{\dmana[\F]}{\dgs}$ we have the subcategory $\mathfrak D:=\sreso\lp\lp\fibra{\Dmana[\F]}{\dgs}\rp\times\Delta\rp$. Hence we have an inclusion
	\begin{eqn}\mathfrak D/\lifto{\dgs'}\hooklongrightarrow\lp\fibra{\dmana[\F]}{\dgs}\rp/\lifto{\dgs'}.\end{eqn}
To compute $\mapis[\loca{\op{\derca[\F]}}]\lp\dgs,\dgs'\rp$ we can use the category of cocycles $\cocy[\op{\derca[\F]}]\lp\dgs,\dgs'\rp$, i.e.\@ we can take the nerve of $\mathfrak F/\lifto{\dgs'}$, where $\mathfrak F$ is the category of all trivial fibrations onto $\dgs$ in $\op{\derca[\F]}$.\footnote{Here we use the notion of fibrations in $\op{\derca[\F]}$, not in $\Dmana[\F]$.} 
	
Now we compare $\mapis[\loca{\Dmana[\F]}]\lp\dgs,\dgs'\rp$ and $\mapis[\loca{\op{\derca[\F]}}]\lp\dgs,\dgs'\rp$. Introducing additional copies of objects, we can assume that $\mathfrak D\subset\op{\derca[\F]}/\dgs$ and $\mathfrak D\cap\mathfrak F=\emptyset$. We define $\mathfrak D\ltimes\mathfrak F\subset\op{\derca[\F]}/\dgs$ to be the subcategory consisting of $\mathfrak D\cup\mathfrak F$ and all morphisms from objects of $\mathfrak D$ to objects of $\mathfrak F$. We claim that $\mathfrak D\hookrightarrow\mathfrak D\ltimes\mathfrak F$ is a left cofinal functor, i.e.\@ $\forall\dgs''\in\mathfrak F$ the nerve $\nerve{\mathfrak D/\dgs''}$ is contractible. It is here that we use functoriality of $\reso$, observing that 
	\begin{eqn}\mathfrak D=\sreso\lp\lp\fibra{\Dmana[\F]}{\dgs}\rp\times\Delta\rp\cong
		\lp\fibra{\Dmana[\F]}{\dgs}\rp\times\Delta.\end{eqn} 
Therefore we can compute homotopy colimits in two steps: first as a derived functor of $\sset^{\lp\fibra{\Dmana[\F]}{\dgs}\rp\times\Delta}\rightarrow\sset^{\fibra{\Dmana[\F]}{\dgs}}$ (the left adjoint to the pullback over $\lp\fibra{\Dmana[\F]}{\dgs}\rp\times\Delta\rightarrow\fibra{\Dmana[\F]}{\dgs}$) and then as a derived functor of $\sset^{\fibra{\Dmana[\F]}{\dgs}}\rightarrow\sset$. Using these two steps we have
	\begin{eqn}\nerve{\mathfrak D/\dgs''}\simeq\underset{\dgs'''\in\,\fibra{\Dmana[\F]}{\dgs}}\hocolim\;
		\Hom[\op{\derca[\F]}/\dgs]\lp\reso[\dgs'''],\dgs''\rp\simeq\end{eqn}
	\begin{eqn}\simeq\underset{\dgs'''\in\,\fibra{\Dmana[\F]}{\dgs}}\hocolim\;
		\Hom[\op{\derca[\F]}/\dgs]\lp\reso[\dgs'''],\dgs\rp\simeq\nerve{\mathfrak D/\dgs}.\end{eqn}
The first and the last weak equivalences are as in \cite{DK3} \S8.2, while the middle weak equivalence is due to $\dgs''\rightarrow\dgs$ being a trivial fibration in $\op{\derca[\F]}$ (\cite{DK3} \S6.2). Since $\dgs\in\mathfrak D$, it is clear that $\nerve{\mathfrak D/\dgs}$ is contractible.
	
\smallskip
	
Now let $\mathfrak R\colon\mathfrak D\rightarrow\mathfrak F$ be the fibrant replacement functor with respect to the model structure on $\op{\derca[\F]}$. This gives us two functors 
	\begin{eqn}\iota,\mathfrak R\colon\mathfrak D\hooklongrightarrow\mathfrak D\ltimes\mathfrak F\end{eqn} 
and a natural transformation $\iota\rightarrow\mathfrak R$ that consists of weak equivalences. Then the corresponding maps on nerves $\nerve{\mathfrak D}\rightrightarrows\nerve{\mathfrak D\ltimes\mathfrak F}$ are homotopic. Altogether various inclusions of subcategories give us the following diagrams
	\begin{eqn}\Hom[\op{\derca[\F]}]\lp\reso\lp\dgs\rp,\dgs'\rp\rightarrow\nerve{\mathfrak D/\lifto{\dgs'}}\overset{\mathfrak R}\longrightarrow\nerve{\mathfrak F/\lifto{\dgs'}}\rightarrow\nerve{\mathfrak D\ltimes\mathfrak F/\lifto{\dgs'}},\end{eqn}
	\begin{eqn}\Hom[\op{\derca[\F]}]\lp\reso\lp\dgs\rp,\dgs'\rp\rightarrow\nerve{\lp\fibra{\Dmana[\F]}{\dgs}\rp/\lifto{\dgs'}}\rightarrow\nerve{\mathfrak D/\lifto{\dgs'}}\rightarrow\end{eqn}
	\begin{eqn}\rightarrow\nerve{\lp\fibra{\dmana[\F]}{\dgs}\rp/\lifto{\dgs'}}.\end{eqn}
The first two arrows in the first diagram compose to a weak equivalence because $\Hom[\op{\derca[\F]}]\lp\reso\lp\dgs\rp,\dgs'\rp$ computes mapping spaces in $\op{\derca[\F]}$ (\cite{DK3}, \S4.4). The last two arrows in this diagram compose to a weak equivalence because $\mathfrak D\hookrightarrow\mathfrak D\ltimes\mathfrak F$ is a left cofinal functor (\cite{BKan}, \S XI.9). Therefore all maps in this diagram are weak equivalences.\footnote{This is an instance of the 2-out-of-6 property (\cite{DHKS} \S26.2).} Hence the first two arrows in the second diagram compose to a weak equivalence, while the composition of the last two arrows is an isomorphism (Remark \ref{Retract}). Thus we conclude that
	\begin{eqn}\Hom[\op{\derca[\F]}]\lp\reso\lp\dgs\rp,\dgs'\rp\overset{\simeq}\longrightarrow
		\nerve{\lp\fibra{\Dmana[\F]}{\dgs}\rp/\lifto{\dgs'}}.\end{eqn}\end{proof}

\section{Categories of stacks}\label{SectionStacks}
We have considered three categories:\begin{itemize}
	\item $\op{\derca[\F]}$, where $\derca[\F]$ is the category of differential $\nopos$-graded $\F$-algebras,
	\item $\Dmana[\F]\cong\op{\sderca}$ -- the category of affine dg manifolds,
	\item $\Dman[\F]$ -- the category of all dg manifolds. 
\end{itemize}
Theorems \ref{ManifoldsInclusion}, \ref{AlgebrasInclusion} tell us that the inclusions $\Dmana[\F]\hookrightarrow\op{\derca[\F]}$ and $\Dmana[\F]\hookrightarrow\Dman[\F]$ induce homotopically full and faithful functors between the corresponding simplicial localizations. Now we would like to put Grothendieck topologies on these simplicial localizations (\cite{HAG1} \S3.1) and compare the corresponding categories of derived stacks.

\smallskip

On $\loca{\op{\derca[\F]}}$ we use the étale topology (e.g.\@ \cite{ToVa} \S2.3). Recall that $\ddga{}\rightarrow\ddgb{}$ is \ee{an étale covering},\footnote{Instead of specifying étale coverings in the homotopy category $\Ho{\derca[\F]}$ of $\derca[\F]$ we describe their representatives in $\derca[\F]$.} if\begin{enumerate}
	\item $\hogy[0]\lp\ddga{}\rp\rightarrow\hogy[0]\lp\ddgb{}\rp$ is an étale morphism of $\F$-algebras,
	\item $\forall k<0$ $\hogy[k]\lp\ddga{}\rp\underset{\hogy[0]\lp\ddga{}\rp}\otimes\hogy[0]\lp\ddgb{}\rp
	\overset{\cong}\longrightarrow\hogy[k]\lp\ddgb{}\rp$,
	\item $\spe{\hogy[0]\lp\ddgb{}\rp}\longrightarrow\spe{\hogy[0]\lp\ddga{}\rp}$ is a surjective morphism of schemes over $\F$.\end{enumerate}
The first two conditions express the notion of being étale, while the last one specifies what it means for a morphism to be a covering. We define $\ddga{}\rightarrow\ddgb{}$ in $\sderca[\F]$ to be an étale covering, if it is such as a morphism in $\derca[\F]$, i.e.\@ if it satisfies the 3 conditions above. 

\begin{remark}As $\Dmana[\F]\hookrightarrow\op{\derca[\F]}$ induces a homotopically full and faithful $\loca{\Dmana[\F]}\rightarrow\loca{\op{\derca[\F]}}$, when dealing with Grothendieck topologies on $\loca{\Dmana[\F]}$ we can use \ee{the essential image} $\fderca[\F]\subset\derca[\F]$ of $\sderca[\F]$, i.e.\@ the full subcategory consisting of objects that are weakly equivalent to objects in $\sderca[\F]$. One immediately obtains that $\dga\in\fderca[\F]$, if and only if $\hogy\lp\dga\rp$ has finitely many generators in each degree. Then it is clear that, if $\ddga{}\rightarrow\ddgb{}$ is an étale covering in $\derca[\F]$, and one of the objects is in $\sderca$, so is the other.\end{remark}

We use essentially the same notion of covering for $\Dman[\F]$. We say that $\lp\Phi,\smor\rp\colon\dgs\rightarrow\dgs'$ in $\Dman[\F]$ is \ee{an étale covering}, if\begin{enumerate}
	\item $\hofu[0]\lp\dgs\rp\rightarrow\hofu[0]\lp\dgs'\rp$ is an étale morphism of schemes over $\F$,
	\item $\forall k<0$ $\hosh[k]\lp\Phi^{-1}\lp\streaf[\bullet]{\dgs'}\rp\rp
	\underset{\Phi^{-1}\lp\hosh[0]\lp\streaf[\bullet]{\dgs'}\rp\rp}\otimes\hosh[0]\lp\streaf[\bullet]{\dgs}\rp
	\overset{\cong}\longrightarrow\hosh[k]\lp\streaf[\bullet]{\dgs}\rp$,
	\item $\hofu[0]\lp\dgs\rp\longrightarrow\hofu[0]\lp\dgs'\rp$ is a surjective morphism of schemes over $\F$.\end{enumerate}
\begin{remark} Equivalently we can require that for any open affine chart\footnote{For any open $U\subseteq\cs$ we have the dg manifold $\lp U,\streaf[\bullet]{\dgs}|_U\rp\subseteq\dgs$, if it happens to be isomorphic to $\spec{\dga}$, we call it \ee{an open affine chart}.} $\spec{\dga}\subseteq\dgs'$ the morphism $\dgs\underset{\dgs'}\times\spec{\dga}\longrightarrow\spec{\dga}$ belongs to the essential image of $\Dmana[\F]$ in $\Dman[\F]$ and is an étale covering. Clearly the two notions of covering in $\Dmana[\F]$ -- one from $\op{\derca[\F]}$ and one from $\Dman[\F]$ -- coincide.\end{remark}

Now we consider the categories of simplicial prestacks on the $\sset$-categories $\loca{\op{\derca[\F]}}$, $\loca{\Dmana[\F]}$ and $\loca{\Dman[\F]}$ (\cite{HAG1} Def.\@ 2.3.2). We denote them by $\spr{\loca{\op{\derca[\F]}}}$, $\spr{\loca{\Dmana[\F]}}$ and $\spr{\loca{\Dman[\F]}}$ respectively. These are simplicial model categories (\cite{HAG1} \S2.3.1).  In each case we denote the corresponding Yoneda embedding (\cite{HAG1} \S2.4) by putting hats, e.g.\@ $\loca{\Dman[\F]}\ni\dgs\mapsto\yo{\dgs}\in\spr{\loca{\Dman[\F]}}$. The following proposition will be important in proving that open affine charts on dg manifolds constitute a hypercover.

\begin{proposition}\label{TwoIntersections} Let $\dgs,\dgt\in\Dman[\F]$, and let $U\subseteq\cs$ be an open subset. Denote $\dgs':=\lp U,\streaf[\bullet]{\dgs}|_U\rp$ and let $\dgs'\underset{\dgs}\times\dgt$ be the fiber product computed in $\Dman[\F]$. Let $\yo{\dgs'}\underset{\yo{\dgs}}\times\yo{\dgt}$ be the homotopy pullback computed in $\spr{\Dman[\F]}$. There is a weak equivalence in $\spr{\Dman[\F]}$: 
	\begin{eqn}\label{IntersectionTwoWays}\yo{\dgs'\underset{\dgs}\times\dgt}\overset{\simeq}\longrightarrow
		\yo{\dgs'}\underset{\yo{\dgs}}\times\yo{\dgt}.\n\end{eqn}
The same statement holds for $\Dmana[\F]$ and $\spr{\Dmana[\F]}$.
\end{proposition}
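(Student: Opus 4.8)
The plan is to reduce the statement to a computation of mapping spaces, using the characterization of mapping spaces in categories of fibrant objects via cocycles (as in the proof of Theorem \ref{ManifoldsInclusion}), together with the key Lemma \ref{ConnectedComp} on open inclusions. First I would recall that, since $\Dman[\F]$ is a category of fibrant objects, for any $\dgt'\in\Dman[\F]$ the mapping space $\mapis[{\Dman[\F]}]\lp\dgt',\dgs\rp$ is weakly equivalent to the nerve of the category of cocycles $\dgt'\leftarrow\dgt''\rightarrow\dgs$ with trivial fibration on the left, and similarly for $\dgs'$ and $\dgt$ in place of $\dgs$. The representable prestack $\yo{\dgs'\underset{\dgs}\times\dgt}$ sends $\dgt'$ to $\mapis[{\Dman[\F]}]\lp\dgt',\dgs'\underset{\dgs}\times\dgt\rp$, while the homotopy pullback $\yo{\dgs'}\underset{\yo{\dgs}}\times\yo{\dgt}$ sends $\dgt'$ to the homotopy pullback of simplicial sets $\mapis[{\Dman[\F]}]\lp\dgt',\dgs'\rp\times^{h}_{\mapis[{\Dman[\F]}]\lp\dgt',\dgs\rp}\mapis[{\Dman[\F]}]\lp\dgt',\dgt\rp$. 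So it suffices to show this comparison is a weak equivalence for every $\dgt'$; the map itself is the canonical one induced by the projections out of the fiber product.

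Next I would invoke Lemma \ref{ConnectedComp}: since $U\subseteq\cs$ is open, the image of $\mapis[{\Dman[\F]}]\lp\dgt',\dgs'\rp\rightarrow\mapis[{\Dman[\F]}]\lp\dgt',\dgs\rp$ is weakly equivalent to a union of connected components, and in fact $\mapis[{\Dman[\F]}]\lp\dgt',\dgs'\rp\rightarrow\mapis[{\Dman[\F]}]\lp\dgt',\dgs\rp$ is the inclusion of those components (up to weak equivalence), namely the components of morphisms $\dgt'\rightarrow\dgs$ that factor (necessarily uniquely, since $\dgs'\hookrightarrow\dgs$ is a monomorphism on ambient schemes) through $\dgs'$. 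For a homotopy pullback of simplicial sets over a map which is a componentwise inclusion, the homotopy pullback computes the (homotopy) preimage: it is weakly equivalent to the union of those components of $\mapis[{\Dman[\F]}]\lp\dgt',\dgt\rp$ lying over the relevant components of $\mapis[{\Dman[\F]}]\lp\dgt',\dgs\rp$. On the other hand, a morphism $\dgt'\rightarrow\dgt$ factors through $\dgs'\underset{\dgs}\times\dgt$ if and only if the composite $\dgt'\rightarrow\dgt\rightarrow\dgs$ lands in $U$ at the level of ambient classical schemes, which is exactly an open (hence clopen-on-components) condition on $\mapis[{\Dman[\F]}]\lp\dgt',\dgt\rp$. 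Matching these two descriptions componentwise gives the weak equivalence (\ref{IntersectionTwoWays}).

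The main obstacle is the careful bookkeeping needed to make "union of connected components" precise on both sides simultaneously and to check that the canonical map respects this decomposition. Concretely, one must verify: (i) that $\mapis[{\Dman[\F]}]\lp\dgt',\dgt\rp\rightarrow\mapis[{\Dman[\F]}]\lp\dgt',\dgs\rp$ (induced by $\dgt\rightarrow\dgs$) pulls back the clopen subspace $\mathrm{Im}\,\mapis[{\Dman[\F]}]\lp\dgt',\dgs'\rp$ to precisely the clopen subspace $\mapis[{\Dman[\F]}]\lp\dgt',\dgt\underset{\dgs}\times\dgs'\rp$ — this is where one uses that $\dgs'\subseteq\dgs$ is an open dg subscheme and that fiber products in $\Dman[\F]$ restrict open inclusions to open inclusions; and (ii) that since the comparison map into the homotopy pullback is, on each component, an equivalence (both sides being the corresponding component of $\mapis[{\Dman[\F]}]\lp\dgt',\dgt\rp$, because the strict fiber product over the monomorphism $\dgs'\hookrightarrow\dgs$ already computes the homotopy fiber product at that component level). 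The affine case $\Dmana[\F]$, $\spr{\Dmana[\F]}$ follows by the identical argument, using that Lemma \ref{ConnectedComp} and the cocycle description of mapping spaces hold for $\Dmana[\F]$ as well, or alternatively by restricting the equivalence just obtained along $\Dmana[\F]\hookrightarrow\Dman[\F]$ together with Theorem \ref{ManifoldsInclusion}.
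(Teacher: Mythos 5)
Your proposal is correct and follows essentially the same route as the paper: reduce to an objectwise computation of mapping spaces via nerves of cocycle categories, show that restriction to the open dg subscheme $\dgs'$ picks out a union of connected components (this is Lemma \ref{ConnectedComp}), and identify the homotopy pullback over a componentwise inclusion with the corresponding components of $\mapis[{\Dman[\F]}]\lp\dgt',\dgt\rp$, which a second application of the same lemma identifies with $\mapis[{\Dman[\F]}]\lp\dgt',\dgs'\underset{\dgs}\times\dgt\rp$. The one point to tighten is that the clopen condition must be phrased on the underlying classical scheme $\hofu[0]\lp\dgt''\rp$ of the middle object of a cocycle (so that it is invariant under weak equivalences of cocycles), rather than on ambient classical schemes; this is exactly the bookkeeping you flag, and the paper handles it via the intermediate subcategory $\cocy^{\dgs'}\lp\dgt',\dgs\rp$ and the pullback $\dgt''\underset{\dgs}\times\dgs'\rightarrow\dgt''$, which is a trivial fibration precisely because the $\hofu[0]$-image lies in $U$.
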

\begin{proof} In the model structure on $\spr{\Dman[\F]}$ the weak equivalences and fibrations are objectwise weak equivalences and fibrations (\cite{HAG1} \S2.3.1), therefore we can compute the homotopy pullback $\yo{\dgs'}\underset{\yo{\dgs}}\times\yo{\dgt}$ also objectwise and use any representatives (up to weak equivalences) of the mapping spaces. Let $\dgt'\in\Dman[\F]$. We have a natural weak equivalence 
		\begin{eqn}\nerve{\cocy\lp\dgt',\dgs\rp}\overset\simeq\hooklongrightarrow
			\mapis[\loca{\Dman[\F]}]\lp\dgt',\dgs\rp,\end{eqn}
where $\nerve{\cocy\lp\dgt',\dgs\rp}$ is the nerve of the category of cocycles from $\dgt'$ to $\dgs$ (proof of Theorem \ref{ManifoldsInclusion}). Let $\cocy^{\dgs'}\lp\dgt',\dgs\rp\subset\cocy\lp\dgt',\dgs\rp$ be the full subcategory consisting of $\dgt'\leftarrow\dgt''\rightarrow\dgs$ with the property that $\hofu[0]\lp\dgt''\rp\rightarrow\hofu[0]\lp\dgs\rp$ factors through $\hofu[0]\lp\dgs'\rp\hookrightarrow\hofu[0]\lp\dgs\rp$. Since morphisms between cocycles are given by weak equivalences, the nerve $\nerve{\cocy^{\dgs'}\lp\dgt',\dgs\rp}$ is a union of connected components of $\nerve{\cocy\lp\dgt',\dgs\rp}$.
\begin{lemma}\label{ConnectedComp} The obvious inclusion $\nerve{\cocy\lp\dgt',\dgs'\rp}\hookrightarrow\nerve{\cocy^{\dgs'}\lp\dgt',\dgs\rp}$, given by composition with $\dgs'\hookrightarrow\dgs$, is a weak equivalence.\end{lemma}
\begin{proof} Composition with $\dgs'\hookrightarrow\dgs$ gives us a functor $\cocy\lp\dgt',\dgs'\rp\rightarrow\cocy^{\dgs'}\lp\dgt',\dgs\rp$. We construct a functor in the opposite direction: given $\dgt'\leftarrow\dgt''\rightarrow\dgs$ in $\cocy^{\dgs'}\lp\dgt',\dgs\rp$ we define $\dgt''':=\dgt''\underset{\dgs}\times\dgs'$. By construction $\dgt'''\rightarrow\dgt''$ is a fibration ($\dgs'\hookrightarrow\dgs$ is a fibration), while the assumption that $\hofu[0]\lp\dgt''\rp\rightarrow\dgs$ factors through $\dgs'$ implies that $\dgt'''\rightarrow\dgt''$ is a weak equivalence. Thus we have an adjunction $\cocy\lp\dgt',\dgs'\rp\leftrightarrows\cocy^{\dgs'}\lp\dgt',\dgs\rp$, hence the corresponding maps on the nerves are weak equivalences.\end{proof}
The previous Lemma tells us that in order to compute the homotopy pullback $\nerve{\cocy\lp\dgt',\dgs'\rp}\underset{\nerve{\cocy\lp\dgt',\dgs\rp}}\times\nerve{\cocy\lp\dgt',\dgt\rp}$ we can equivalently compute the homotopy pullback
	\begin{eqn}\label{HalfComponent}\nerve{\cocy^{\dgs'}\lp\dgt',\dgs\rp}
		\underset{\nerve{\cocy\lp\dgt',\dgs\rp}}\times\nerve{\cocy\lp\dgt',\dgt\rp}.\n\end{eqn}
The map $\nerve{\cocy^{\dgs'}\lp\dgt',\dgs\rp}\rightarrow\nerve{\cocy\lp\dgt',\dgs\rp}$ is an inclusion of a union of connected components. Hence (\ref{HalfComponent}) is naturally equivalent to the corresponding union of connected components in $\nerve{\cocy\lp\dgt',\dgt\rp}$, i.e.\@ to $\nerve{\cocy^{\dgs'\underset{\dgs}\times\dgt}\lp\dgt',\dgt\rp}$. Using the Lemma again we are done. The same proof applies to $\Dmana[\F]$. \end{proof}

According to Proposition \ref{TwoIntersections} Yoneda embeddings on $\Dman[\F]$ and $\Dmana[\F]$ map intersections of open charts to homotopy pullbacks. This allows us to construct useful families of hypercovers in $\spr{\Dman[\F]}$ and $\spr{\Dmana[\F]}$.

\begin{theorem}\label{CechCovers} Let $\dgs\in\Dman[\F]$, and let $\lf U_i\rf_{i\in I}$ be an open affine atlas on $\cs$. For any non-empty finite $s\subseteq I$ let $U_s:=\underset{i\in s}\bigcap\, U_i$, and $\dgs_s:=\lp U_s,\streaf[\bullet]{\dgs}|_{U_s}\rp$. Then
	\begin{eqn}\lf\underset{0\leq m\leq n}\coprod\;\underset{\oset{n}\twoheadrightarrow\oset{m}}\coprod\;
	\yo{\underset{|s|=m+1}\bigsqcup\dgs_s}\rf_{n\geq 0}\longrightarrow\yo{\dgs}\end{eqn}
is a hypercover (\cite{HAG1} Def.\@ 3.2.3).\end{theorem}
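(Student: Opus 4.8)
The plan is to recognise the displayed augmented simplicial prestack as the homotopy \v{C}ech nerve of the covering family $\lf\dgs_i:=\lp U_i,\streaf[\bullet]{\dgs}|_{U_i}\rp\to\dgs\rf_{i\in I}$ and then to invoke the standard fact that the \v{C}ech nerve of a covering is a hypercover. For a tuple $(i_0,\dots,i_n)\in I^{n+1}$ write $\dgs_{i_0\cdots i_n}:=\dgs_{i_0}\underset{\dgs}\times\cdots\underset{\dgs}\times\dgs_{i_n}$ for the fiber product computed in $\Dman[\F]$; unravelling the fiber product of open sub-dg-schemes gives $\dgs_{i_0\cdots i_n}=\bigl(U_{i_0}\cap\cdots\cap U_{i_n},\streaf[\bullet]{\dgs}|_{U_{i_0}\cap\cdots\cap U_{i_n}}\bigr)=\dgs_s$ with $s=\lf i_0,\dots,i_n\rf$. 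Since $\cs$ is quasi-projective, hence separated, a finite intersection of affine opens is affine, so each $\dgs_s$ is an affine dg manifold, in particular a representable object of $\Dman[\F]$. Sorting the length-$(n+1)$ tuples according to their underlying set $s$ (of some cardinality $m+1$) together with the induced surjection $\oset{n}\twoheadrightarrow\oset{m}$ identifies the $n$-th term of the displayed diagram with $\coprod_{(i_0,\dots,i_n)\in I^{n+1}}\yo{\dgs_{i_0\cdots i_n}}$; with the evident faces, degeneracies and augmentation this is precisely the strict \v{C}ech nerve of $q\colon\coprod_{i\in I}\yo{\dgs_i}\to\yo{\dgs}$.

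Next I would check that $q$ is a covering. Each $\dgs_i\to\dgs$ is the inclusion of an open sub-dg-scheme, so $\hofu[0]\lp\dgs_i\rp\cong\hofu[0]\lp\dgs\rp\cap U_i\hookrightarrow\hofu[0]\lp\dgs\rp$ is an open immersion and in particular étale; condition (2) of the definition of an étale covering in $\Dman[\F]$ holds trivially because restriction to an open subscheme is exact and commutes with $\hosh[\bullet]$; and $\coprod_i\hofu[0]\lp\dgs_i\rp\to\hofu[0]\lp\dgs\rp$ is surjective because $\lf U_i\rf$ covers $\cs\supseteq\hofu[0]\lp\dgs\rp$. Hence $\lf\dgs_i\to\dgs\rf_{i\in I}$ is an étale covering, and therefore $q$ is a covering morphism in $\spr{\Dman[\F]}$ for the topology induced from $\loca{\Dman[\F]}$.

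Then I would pass from the strict to the homotopy \v{C}ech nerve. Since $\dgs_{i_0\cdots i_n}$ is an iterated fiber product over $\dgs$ along the open inclusions $\dgs_{i_k}\hookrightarrow\dgs$, repeated application of Proposition \ref{TwoIntersections} — at the $k$-th step with ``$U$''$=U_{i_k}$ and ``$\dgt$''$=\dgs_{i_0\cdots i_{k-1}}$ — produces canonical weak equivalences $\yo{\dgs_{i_0\cdots i_n}}\overset{\simeq}\longrightarrow\yo{\dgs_{i_0}}\underset{\yo{\dgs}}\times^{h}\cdots\underset{\yo{\dgs}}\times^{h}\yo{\dgs_{i_n}}$ in $\spr{\Dman[\F]}$. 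These comparison maps are natural in the tuple, so they assemble into a levelwise weak equivalence of augmented simplicial objects from the displayed diagram onto the homotopy \v{C}ech nerve $C_\bullet\to\yo{\dgs}$ of $q$, whose $n$-th term is the $(n+1)$-fold homotopy fiber power of $\coprod_{i\in I}\yo{\dgs_i}$ over $\yo{\dgs}$.

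Finally, the augmented object $C_\bullet\to\yo{\dgs}$ is by construction a homotopy \v{C}ech nerve, hence $C_\bullet\simeq\mathrm{cosk}_n^{h}C_\bullet$ for every $n\ge 0$; consequently each matching map of the displayed diagram, being weakly equivalent to $C_{n+1}\to\bigl(\mathrm{cosk}_n^{h}C_\bullet\bigr)_{n+1}$, is a weak equivalence, while the level-$0$ matching map is $q$ itself, which is a covering. As weak equivalences are coverings and each term of the displayed diagram is a coproduct of representables, it is a hypercover in the sense of \cite{HAG1} Def.\@ 3.2.3. I expect the one step with genuine content to be the third: passing from the honestly-constructed simplicial object built out of set-theoretic intersections of the charts to the homotopy \v{C}ech nerve. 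This is exactly what Proposition \ref{TwoIntersections} (and, behind it, Lemma \ref{ConnectedComp}) is there to supply, since without it the strict fiber products $\dgs_{i_0\cdots i_n}$ need not compute homotopy fiber products in $\spr{\Dman[\F]}$. The remaining steps — the combinatorial reindexing, the fact that open covers are étale coverings, and the homotopy coskeletality of a \v{C}ech nerve — are formal.
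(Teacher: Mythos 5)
Your proposal is correct and takes essentially the same route as the paper's own (much terser) proof: both arguments reduce everything to Proposition \ref{TwoIntersections}, which guarantees that the strict intersections $\dgs_s$ compute the relevant homotopy fiber products in $\spr{\Dman[\F]}$, so that the level-$0$ map is an (\'etale) covering and all higher matching maps are weak equivalences, hence coverings. The one point to double-check is your identification of the $n$-th term with the unordered \v{C}ech nerve $\coprod_{(i_0,\dots,i_n)\in I^{n+1}}\yo{\dgs_{\lf i_0,\dots,i_n\rf}}$: this is exactly right if $\oset{n}\twoheadrightarrow\oset{m}$ ranges over \emph{all} surjections of finite sets (each subset $s$ with $|s|=m+1$ then appears once per tuple with image $s$), whereas under the order-preserving reading the level-$n$ term would carry only $\binom{n}{m}$ copies of each $\dgs_s$ and the level-$1$ matching map would miss half of the off-diagonal components of $F_0\times^h_{\yo{\dgs}}F_0$ — so the reading you implicitly chose is the one under which the statement holds, and your argument goes through.
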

\begin{proof} It is clear that $\underset{|s|=1}\bigsqcup\dgs_s\rightarrow\dgs$ is a covering. When $n>0$ Proposition \ref{TwoIntersections} tells us that the map from $\yo{\underset{|s|= n+1}\bigsqcup\dgs_s}$ to the fiber product of copies of $\yo{\underset{|s|= n}\bigsqcup\dgs_s}$ over $\yo{\dgs}$ is a weak equivalence, hence a covering.\end{proof}

Now we turn to the categories of stacks on $\op{\derca[\F]}$, $\Dmana[\F]$ and $\Dman[\F]$. We will denote them by $\stacks{\op{\derca[\F]}}$, $\stacks{\Dmana[\F]}$ and $\stacks{\Dman[\F]}$ respectively. These are model categories obtained as left Bousfield localizations (at the hypercovers or equivalently local weak equivalences) of $\spr{\loca{\op{\derca[\F]}}}$, $\spr{\loca{\Dmana[\F]}}$ and $\spr{\loca{\Dman[\F]}}$ (\cite{HAG1} Rem.\@ 3.4.6). We would like to compare these 3 categories of stacks. 

\begin{lemma}\label{PreservingLocal} In the Quillen adjunction $L\colon\spr{\Dmana[\F]}\leftrightarrows\spr{\Dman[\F]}\colon R$ the left Quillen functor $L$ maps local weak equivalences to local weak equivalences.\end{lemma}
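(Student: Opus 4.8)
The plan is to trade this statement about $L$ for one about $R$. By a standard property of left Bousfield localizations (\cite{Hirsh}) it suffices to show that $L$ — which is a left Quillen functor for the objectwise model structures on $\spr{\Dmana[\F]}$ and $\spr{\Dman[\F]}$, since its right adjoint $R$ (restriction along $\loca{\Dmana[\F]}\to\loca{\Dman[\F]}$) visibly preserves objectwise fibrations and objectwise weak equivalences — carries the generating hypercovers of $\spr{\Dmana[\F]}$ to local weak equivalences in $\spr{\Dman[\F]}$; this formally yields preservation of all local weak equivalences. The form of the reduction I will actually use is the adjoint one: it is enough to prove that $R$ carries stacks on $\Dman[\F]$ to stacks on $\Dmana[\F]$. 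Indeed, for a stack $Z$ on $\Dman[\F]$ (in particular objectwise fibrant) the derived adjunction gives a natural weak equivalence $\mapis\lp LB,Z\rp\simeq\mapis\lp B,RZ\rp$, so if $RZ$ is again a stack then any local weak equivalence $f\colon A\to B$ in $\spr{\Dmana[\F]}$ is sent by $L$ to a map inducing weak equivalences of mapping spaces into every such $Z$, i.e.\ $Lf$ is a local weak equivalence.

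So fix a stack $Z$ on $\Dman[\F]$ and a hypercover $U_\bullet\to\yo{\dgs}$ in $\spr{\Dmana[\F]}$ with $\dgs\in\Dmana[\F]$; we must show $RZ$ has descent along it. By the standard cofinality reduction we may take each $U_n$ to be a coproduct of representables of affine dg manifolds. Applying $L$ levelwise produces $L(U_\bullet)\to\yo{\dgs}$ in $\spr{\Dman[\F]}$, again a simplicial object whose terms are coproducts of representables of affine dg manifolds: $L$, being a left adjoint that is left Quillen for the objectwise structures, commutes with the homotopy colimits assembling $U_\bullet$, and by Theorem \ref{ManifoldsInclusion} it sends the representable prestack of an affine dg manifold to the representable prestack of the same object regarded in $\Dman[\F]$. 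Evaluating $RZ$ on the $\Dmana[\F]$-hypercover $U_\bullet$ computes the same homotopy limit as evaluating $Z$ on $L(U_\bullet)$; hence $RZ$ has descent along $U_\bullet$ as soon as $Z$, being a stack on $\Dman[\F]$, has descent along $L(U_\bullet)\to\yo{\dgs}$, which holds once $L(U_\bullet)\to\yo{\dgs}$ is a hypercover in $\spr{\Dman[\F]}$. Thus the whole lemma reduces to: $L$ sends hypercovers of affine dg manifolds to hypercovers in $\spr{\Dman[\F]}$.

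It remains to check the covering condition on the matching maps of $L(U_\bullet)\to\yo{\dgs}$. The matching object $M_nU$ in $\spr{\Dmana[\F]}$ is a finite homotopy limit of representables of affine dg manifolds, taken along (base changes of degree-$0$ étale) fibrations; by Proposition \ref{WrongCatF} these fiber products exist in $\Dmana[\F]$ and are again affine dg manifolds, and by the argument proving Proposition \ref{TwoIntersections} — run with these fibrations in place of open immersions — $M_nU$ is representable, with $M_n\lp L(U_\bullet)\rp\simeq L(M_nU)$ the representable of the corresponding fiber product of affine dg manifolds formed in $\Dman[\F]$ (which agrees with the one formed in $\Dmana[\F]$, again by Proposition \ref{WrongCatF}). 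Since $\Dmana[\F]\hookrightarrow\Dman[\F]$ carries étale coverings to étale coverings — the topology on $\Dmana[\F]$ inherited from $\Dman[\F]$ coincides with the one inherited from $\op{\derca[\F]}$, as noted above — each matching map of $L(U_\bullet)\to\yo{\dgs}$ is again an étale covering, so $L(U_\bullet)\to\yo{\dgs}$ is a hypercover in $\spr{\Dman[\F]}$ and the lemma follows. \textbf{The main obstacle} is the identification $M_n\lp L(U_\bullet)\rp\simeq L(M_nU)$, i.e.\ that the inclusion sends the homotopy fiber products assembling the matching objects of an étale hypercover to the homotopy fiber products computed in $\spr{\Dman[\F]}$. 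Proposition \ref{TwoIntersections} is exactly the prototype of this statement for intersections of open charts; extending it to the étale fibrations occurring here amounts to rerunning the cocycle/cofinality argument behind Theorem \ref{ManifoldsInclusion} relative to those fibrations, using their stability under pullback (Proposition \ref{WrongCatF}).
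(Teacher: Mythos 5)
Your overall strategy --- reduce the lemma to the statement that $R$ preserves stacks, hence to the statement that $L$ carries hypercovers of representables in $\spr{\Dmana[\F]}$ to hypercovers in $\spr{\Dman[\F]}$ --- is reasonable in outline, but the step you yourself flag as the main obstacle, the identification $M_n\lp L(U_\bullet)\rp\simeq L(M_nU)$, is a genuine gap, and the repair you propose does not work. The proof of Proposition \ref{TwoIntersections} rests entirely on Lemma \ref{ConnectedComp}: the mapping space into an open dg subscheme $\dgs'\subseteq\dgs$ sits inside the mapping space into $\dgs$ as a union of connected components, because factoring through an open immersion is a \emph{property} of a morphism, detected on $\hofu[0]$. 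For an \'etale covering that is not a monomorphism (already a degree-two \'etale cover of affine schemes) this fails: a morphism can admit several distinct lifts, and $\nerve{\cocy\lp\dgt',\dgs'\rp}\rightarrow\nerve{\cocy\lp\dgt',\dgs\rp}$ is not an inclusion of connected components, so ``rerunning the cocycle/cofinality argument relative to \'etale fibrations'' does not yield the statement you need. There is also a prior problem: in a hypercover $U_\bullet\rightarrow\yo{\dgs}$ the matching maps are only required to be \emph{coverings}, i.e.\@ epimorphisms of sheaves of connected components after sheafification; the matching objects $M_nU$ need not be representable, nor coproducts of representables, so the picture of ``finite homotopy limits of representables along \'etale fibrations, stable under pullback by Proposition \ref{WrongCatF}'' is not available for a general hypercover.

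The paper's proof avoids all of this. It invokes the characterization (\cite{HAG1} Lemma 3.3.3) of a local weak equivalence $\eta\colon F\rightarrow G$ as a generalized hypercover, so the only thing to verify about $L(\eta)$ is that certain maps are coverings --- a condition on $\pi_0$ of the associated sheaves, which is insensitive to whether the homotopy fibre products involved are representable. Since every dg manifold admits an affine atlas, a map $\mu$ in $\spr{\Dman[\F]}$ is a covering if and only if $R(\mu)$ is; in particular the counit $LR(G')\rightarrow G'$ is a covering, and the covering conditions for $L(\eta)$ transport through the adjunction to the corresponding conditions for $\eta$, which hold by hypothesis. If you want to keep your route, the missing ingredient is the general fact that the simplicial localization of a category of fibrant objects sends pullbacks along arbitrary fibrations (not just open immersions) to homotopy pullbacks; that is a substantially stronger statement than Proposition \ref{TwoIntersections}, and it is neither proved nor needed in the paper.
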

\begin{proof} Let $\eta\colon F\rightarrow G$ be a local weak equivalence in $\spr{\Dmana[\F]}$. According to Lemma 3.3.3 in \cite{HAG1} this is equivalent to $\eta$ being a hypercover (between constant simplicial objects in $\spr{\Dmana[\F]}$). We would like to show that $L(\eta)$ is also a hypercover (in $\spr{\Dman[\F]}$). First we notice that every object in $\Ho{\Dman[\F]}$ has a covering consisting of objects in $\Ho{\Dmana[\F]}$. This implies that $\mu\colon F'\rightarrow G'$ in $\spr{\Dman[\F]}$ is a covering (\cite{HAG1} Def.\@ 3.1.3), if and only if $R(\mu)$ is a covering. Then $\forall G'\in\spr{\Dman[\F]}$ the counit $L(R(G'))\rightarrow G'$ is a covering. For $L(\eta)$ to be a hypercover the following morphisms need to be coverings (\cite{HAG1} Lemma 3.3.3)
	\begin{eqn}\forall n\geq 0\quad L(F)^{\Delta_n}\longrightarrow
	L(F)^{\partial\Delta_n}\underset{L(G)^{\partial\Delta_n}}{\times^h} L(G)^{\Delta_n}.\end{eqn}
Using adjunction and the fact that the counit consists of coverings we have that these conditions are equivalent to
	\begin{eqn}\forall n\geq 0\quad F^{\Delta_n}\longrightarrow
	RL(F)^{\partial\Delta_n}\underset{RL(G)^{\partial\Delta_n}}{\times^h} RL(G)^{\Delta_n}\simeq
	F^{\partial\Delta_n}\underset{G^{\partial\Delta_n}}{\times^h} G^{\Delta_n}\end{eqn}
being coverings.\end{proof}

\begin{theorem}\label{QuillenEq} The Quillen adjunction $\spr{\Dmana[\F]}\leftrightarrows\spr{\Dman[\F]}$ defines a Quillen equivalence $\stacks{\Dmana[\F]}\leftrightarrows\stacks{\Dman[\F]}$.\end{theorem}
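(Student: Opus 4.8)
The plan is to invoke the general criterion for when a Quillen adjunction between categories of prestacks descends to a Quillen equivalence between the Bousfield-localized categories of stacks. Recall that $\stacks{\Dmana[\F]}$ and $\stacks{\Dman[\F]}$ are the left Bousfield localizations of $\spr{\Dmana[\F]}$ and $\spr{\Dman[\F]}$ at the local weak equivalences. By Lemma \ref{PreservingLocal} the left Quillen functor $L$ already carries local weak equivalences to local weak equivalences, so $L$ is still left Quillen after localization and $(L,R)$ descends to a Quillen adjunction $\stacks{\Dmana[\F]}\leftrightarrows\stacks{\Dman[\F]}$. It remains to check that this descended adjunction is a Quillen equivalence, i.e.\@ that the derived unit and counit are local weak equivalences.

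First I would treat the counit. For $G\in\spr{\Dman[\F]}$ one must show that the natural map $\mathbb L L\,\mathbb R R\,(G)\rightarrow G$ becomes a weak equivalence in $\stacks{\Dman[\F]}$, i.e.\@ is a local weak equivalence of prestacks. Here is where Theorem \ref{CechCovers} does the work: every $\dgs\in\Dman[\F]$ has an open affine atlas $\{U_i\}_{i\in I}$, and the theorem exhibits the associated \v Cech-type diagram built from the affine pieces $\dgs_s$ as a hypercover of $\yo{\dgs}$. Since the affine dg manifolds $\dgs_s$ lie in (the essential image of) $\Dmana[\F]$, the representable prestacks $\yo{\dgs}$ for $\dgs\in\Dman[\F]$ are, up to local weak equivalence, homotopy colimits of objects coming from $\Dmana[\F]$; equivalently the essential image of $\mathbb L L$ generates $\stacks{\Dman[\F]}$ under homotopy colimits. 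Because $\mathbb R R$ preserves homotopy colimits of such diagrams — this uses Proposition \ref{TwoIntersections}, which guarantees that $R$ takes the homotopy pullbacks appearing in the \v Cech nerve to the correct fiber products, together with the fact (from the proof of Lemma \ref{PreservingLocal}) that a morphism in $\spr{\Dman[\F]}$ is a covering iff its image under $R$ is — the counit on each representable $\yo{\dgs_s}$ with $\dgs_s$ affine is an isomorphism, and the hypercover presentation propagates this to all of $\spr{\Dman[\F]}$ after localization.

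For the unit, given $F\in\spr{\Dmana[\F]}$ one must show $F\rightarrow\mathbb R R\,\mathbb L L\,(F)$ is a local weak equivalence. It suffices to check this on representables $\yo{\dgs}$ with $\dgs\in\Dmana[\F]$, since these generate $\spr{\Dmana[\F]}$ under homotopy colimits and all functors in sight preserve homotopy colimits and local weak equivalences (the latter for $L$ by Lemma \ref{PreservingLocal}); on representables $L\yo{\dgs}\simeq\yo{\dgs}$ and $R\yo{\dgs}\simeq\yo{\dgs}$ because $\Dmana[\F]\hookrightarrow\Dman[\F]$ is homotopically full and faithful (Theorem \ref{ManifoldsInclusion}), so the unit is an equivalence there. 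Assembling the two parts: the descended adjunction has derived unit and counit both local weak equivalences, hence is a Quillen equivalence $\stacks{\Dmana[\F]}\simeq\stacks{\Dman[\F]}$.

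The main obstacle I expect is the counit computation, specifically verifying that $\mathbb R R$ commutes with the homotopy colimit defining the \v Cech hypercover of a general $\dgs\in\Dman[\F]$. The subtlety is that $R$ is a right adjoint and need not a priori preserve colimits; one must exploit the very special geometric structure — that coverings in $\Dman[\F]$ are detected after applying $R$, and that intersections of affine charts are sent to homotopy pullbacks (Proposition \ref{TwoIntersections}) — to conclude that on the subcategory of diagrams arising from affine atlases, $R$ does preserve the relevant homotopy colimits. Making this precise, and checking that the hypercovers of Theorem \ref{CechCovers} suffice to detect local weak equivalences in $\stacks{\Dman[\F]}$, is the technical heart of the argument.
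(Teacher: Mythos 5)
Your skeleton is the same as the paper's: descend the adjunction using Lemma \ref{PreservingLocal}, handle representables via Theorem \ref{ManifoldsInclusion}, and use the affine hypercovers of Theorem \ref{CechCovers} to pass from $\Dmana[\F]$ to $\Dman[\F]$. The one place you genuinely diverge is the counit, and it is exactly the place you flag as the ``technical heart'': you try to deduce that $\mathbb{L}L\,\mathbb{R}R(G)\rightarrow G$ is a local weak equivalence by writing $\yo{\dgs}$ as a homotopy \emph{colimit} of affine representables and then asking the right adjoint $R$ to commute with that homotopy colimit. Your worry is justified -- neither Proposition \ref{TwoIntersections} nor the fact that coverings are detected by $R$ gives you colimit preservation, so as written this step is a gap. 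The paper's argument is dual and sidesteps the issue: a fibrant object of $\stacks{\Dman[\F]}$ satisfies hyperdescent, so by Theorem \ref{CechCovers} its value on any $\dgs$ is recovered as a homotopy \emph{limit} of its values on the affine pieces $\dgs_s$; consequently $R$ (restriction to $\Dmana[\F]$) \emph{reflects} local weak equivalences between stacks on $\Dman[\F]$. Your unit computation on representables (which is correct, and which the paper leaves implicit) then combines with the triangle identity $R(G)\rightarrow RLR(G)\rightarrow R(G)=\mathrm{id}$ and two-out-of-three to show that $R$ applied to the derived counit is an equivalence, hence so is the derived counit itself. So the repair is not to make $\mathbb{R}R$ preserve homotopy colimits but to replace ``the affine representables generate under homotopy colimits'' by ``stacks are determined by their affine values under homotopy limits''; with that substitution your proof closes and agrees with the paper's.
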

\begin{proof} Since cofibrations in $\stacks{\Dmana[\F]}$ are the same as in $\spr{\Dmana[\F]}$ and similarly for $\Dman[\F]$, it is clear that the left adjoint preserves cofibrations. Lemma \ref{PreservingLocal} tells us that this left adjoint also preserves local weak equivalence, hence it is a left Quillen functor.
	
According to Theorem \ref{CechCovers} $\forall\dgs\in\Dman[\F]$ has a hypercover consisting of objects in $\Dmana[\F]$, hence every stack on $\Dman[\F]$ is determined (up to an isomorphism in $\Ho{\stacks{\Dman[\F]}}$) by its values on objects in $\Dmana[\F]$. This implies that the counit of $\Ho{\stacks{\Dmana[\F]}}\leftrightarrows\Ho{\stacks{\Dman[\F]}}$ consists of isomorphisms.\end{proof}

\begin{theorem} The Quillen adjunction $\spr{\Dmana[\F]}\leftrightarrows\spr{\op{\derca[\F]}}$ defines a Quillen adjunction $\stacks{\Dmana[\F]}\leftrightarrows\stacks{\op{\derca[\F]}}$.\end{theorem}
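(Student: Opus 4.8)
The plan is to mimic the argument of Lemma~\ref{PreservingLocal} and Theorem~\ref{QuillenEq}, proving that the left adjoint $L$ is left Quillen for the Bousfield-localized model structures. Cofibrations in $\stacks{\Dmana[\F]}$ and $\stacks{\op{\derca[\F]}}$ coincide with those of $\spr{\Dmana[\F]}$ and $\spr{\op{\derca[\F]}}$, so $L$ preserves cofibrations automatically, and it remains to show that $L$ carries local weak equivalences to local weak equivalences. I will obtain this from the adjoint statement that the right adjoint $R$ --- restriction along $\Dmana[\F]\hookrightarrow\op{\derca[\F]}$ --- maps stacks to stacks: if $R(H)$ is a stack on $\Dmana[\F]$ for every stack $H$ on $\op{\derca[\F]}$, then for a local weak equivalence $F\rightarrow G$ the map $\mapis\lp L(G),H\rp\simeq\mapis\lp G,R(H)\rp\rightarrow\mapis\lp F,R(H)\rp\simeq\mapis\lp L(F),H\rp$ is a weak equivalence for all stacks $H$, so $L(F)\rightarrow L(G)$ is a local weak equivalence. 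I should stress that, unlike Theorem~\ref{QuillenEq}, this will \emph{not} be a Quillen equivalence: a dg algebra whose cohomology has infinitely many generators in some degree admits no étale covering by affine dg manifolds, so the counit will fail to be an isomorphism.

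The substance is the claim that $\Dmana[\F]$ is closed, inside $\op{\derca[\F]}$, under the étale topology. First I would record two closure properties. (i) The essential image $\fderca[\F]$ of $\sderca[\F]$ --- the dg algebras with finitely many cohomology generators in each degree --- is closed under étale coverings in $\derca[\F]$; this is exactly the Remark already in the text. (ii) It is also closed under the homotopy fibre products occurring in Čech and hypercover diagrams: an étale morphism is flat, so the homotopy fibre product of an étale covering $\spec{\ddgb{}}\rightarrow\spec{\ddga{}}$ with any $\spec{\ddgc{}}\rightarrow\spec{\ddga{}}$ is represented by $\spec{\ddgb{}\underset{\ddga{}}\otimes\ddgc{}}$, which is étale over $\spec{\ddgc{}}$ and hence, by (i), again an affine dg manifold; moreover --- using Theorem~\ref{AlgebrasInclusion} (homotopically full and faithful inclusion of simplicial localizations), the fact that Yoneda preserves homotopy limits, and universality of coproducts in pre-stacks --- this fibre product computes the corresponding homotopy fibre product both in $\spr{\Dmana[\F]}$ and in $\spr{\op{\derca[\F]}}$, compatibly with $L$. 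Granting (i)--(ii), any hypercover $\dgs_\bullet\rightarrow\yo{\dgs}$ of an affine dg manifold in $\spr{\Dmana[\F]}$ --- which, by the standard reduction, may be taken with terms coproducts of representables of affine dg manifolds --- has all terms and all matching objects represented by objects of $\Dmana[\F]$, with matching objects agreeing with those computed in $\spr{\op{\derca[\F]}}$; and since, by the very definition of the topology on $\Dmana[\F]$, an étale covering family there is an étale covering family in $\op{\derca[\F]}$, the image $L(\dgs_\bullet)\rightarrow\yo{\dgs}$ is a hypercover in $\spr{\op{\derca[\F]}}$.

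Finally I would conclude: for a stack $H$ on $\op{\derca[\F]}$ and $\dgs\in\Dmana[\F]$, descent of $R(H)$ along a hypercover $\dgs_\bullet\rightarrow\yo{\dgs}$ as above is, since $R(H)$ agrees with $H$ on the affine terms $\dgs_n$, exactly descent of $H$ along the hypercover $L(\dgs_\bullet)\rightarrow\yo{\dgs}$ in $\spr{\op{\derca[\F]}}$, and the latter holds because $H$ is a stack there; hence $R(H)$ is a stack on $\Dmana[\F]$, and the Quillen adjunction follows. I expect the main obstacle to be step (ii) of the middle paragraph: making precise that a hypercover of an affine dg manifold ``never leaves'' $\Dmana[\F]$, with its matching objects matching those of the ambient site. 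The two ingredients I would be careful with are the flatness of étale morphisms (so that the relevant homotopy fibre products are strict tensor products, landing in $\sderca[\F]$ via the Remark, rather than derived constructions that might leave $\sderca[\F]$) and the reduction of hyperdescent to hypercovers with coproduct-of-representable terms.
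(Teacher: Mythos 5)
Your proposal is correct and, at bottom, runs on the same engine as the paper's proof: both arguments reduce the problem to the generating local equivalences of the left Bousfield localization (representable hypercovers with coproduct-of-representables terms, together with the finite-coproduct comparison maps, via quasi-compactness of the \'etale topology and Lemma 1.3.2.3 of \cite{HAG2}), and both hinge on the single substantive fact that the inclusion $\Dmana[\F]\hookrightarrow\op{\derca[\F]}$ preserves hypercovers. The differences are in packaging and in how much is left to the reader. You phrase the criterion through the right adjoint (restriction carries stacks to stacks, hence by the derived adjunction $L$ carries local weak equivalences to local weak equivalences), whereas the paper phrases it through the left adjoint preserving the generating localizing morphisms; these are adjoint formulations of the same statement. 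More significantly, the paper simply asserts that $\op{\fderca[\F]}\hookrightarrow\op{\derca[\F]}$ ``preserves coproducts, weak equivalences and hypercovers,'' delegates the verification to \cite{HAG2} \S 2.2.4, and then composes with the already-established equivalence $\stacks{\Dmana[\F]}\simeq\stacks{\op{\fderca[\F]}}$; you instead prove the hypercover-preservation directly, via closure of $\fderca[\F]$ under \'etale coverings and under the homotopy fibre products occurring in matching objects. That added detail is welcome. The one point to tighten is your claim that the \emph{strict} tensor product $\ddgb{}\underset{\ddga{}}\otimes\ddgc{}$ represents the homotopy fibre product ``because \'etale morphisms are flat'': the cohomological \'etaleness condition of the paper does not by itself make the underlying map of complexes K-flat, so you should first replace the covering by a weakly equivalent cofibration of dg algebras; the conclusion you actually need --- that the homotopy pushout lies in $\fderca[\F]$, being \'etale over $\ddgc{}$ by base change --- is unaffected. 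You should also record, for completeness, the (trivial) check that restriction respects the finite-coproduct comparison maps, since these are among the generators of the localization.
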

\begin{proof} First we state some obvious properties of the étale topologies we are using.\begin{enumerate}
	\item The topologies on $\op{\derca}$, $\Dmana[\F]$, $\op{\fderca[\F]}$ are quasi-compact, i.e.\@ they are generated by finite covering families.\footnote{Recall that $\fderca[\F]\subset\derca[\F]$ is the essential image of $\sderca[\F]$.}
	\item For each finite family (possibly empty) $\lf \dgs_i\rf_{i\in I}$ of objects the family of inclusions $\lf\dgs_i\hookrightarrow\underset{j\in I}\bigsqcup\;\dgs_j\rf_{i\in I}$ is a covering.\end{enumerate}
These are the first two parts of Assumption 1.3.2.2 in \cite{HAG2}. Using these properties we have that Lemma 1.3.2.3 from \cite{HAG2} applies to $\stacks{\op{\fderca[\F]}}$ and $\stacks{\op{\derca[\F]}}$
\begin{lemma}\begin{enumerate}
	\item Let $\lf\dgs_i\rf_{i\in I}$ be a finite family of objects in either $\op{\derca[\F]}$ or $\op{\fderca[\F]}$. The natural morphism
		\begin{eqn}\label{Coproducts}\underset{i\in I}\coprod\;\yo{\dgs_i}\longrightarrow
			\yo{\underset{i\in I}\bigsqcup\;\dgs_i}\n\end{eqn}
	is a weak equivalence in the corresponding category of stacks.
	\item The model categories of stacks on $\op{\derca[\F]}$, $\op{\fderca[\F]}$ are left Bousfield localizations of the corresponding categories of prestacks with respect to representable hypercovers and morphisms as in (\ref{Coproducts}).\end{enumerate}\end{lemma}
\begin{proof} In the case of $\op{\derca[\F]}$ we can use Lemma 1.3.2.3 from \cite{HAG2} directly. In the case of $\op{\fderca[\F]}$ we have a pseudo-model category (\cite{HAG1} \S4) with $\op{\derca[\F]}$ being the ambient model category. Hence the proof of Lemma 1.3.2.3 from \cite{HAG2} applies here as well.\end{proof}
Using the Lemma above we can (as in \cite{HAG2} \S2.2.4) obtain that the left adjoint in $\stacks{\op{\fderca[\F]}}\leftrightarrows\stacks{\op{\derca[\F]}}$ is a left Quillen functor also with respect to the local model structure by noticing that $\op{\fderca[\F]}\hookrightarrow\op{\derca[\F]}$ preserves coproducts, weak equivalences and hypercovers. Composing with the Quillen equivalence $\stacks{\Dmana[\F]}\leftrightarrows\stacks{\op{\fderca[\F]}}$ we are done.\end{proof}

We have shown that the category $\stacks{\Dman[\F]}$ is (weakly) equivalent to the category $\stacks{\op{\fderca[\F]}}$, which is obtained by restricting stacks defined on $\op{\derca[\F]}$ to the essential image $\op{\fderca}$ of $\op{\sderca[\F]}$ in $\op{\derca[\F]}$. It is obvious that $\op{\fderca[\F]}$ is a pseudo-model category with $\op{\derca[\F]}$ being the ambient model category (\cite{HAG1} \S4). Hence $\stacks{\op{\fderca[\F]}}$ inherits from $\stacks{\op{\derca[\F]}}$ the definitions of derived schemes and derived $n$-stacks with respect to smooth morphisms (e.g.\@ \cite{To10} \S5.2).

\begin{theorem}\label{ManSche} Let $\dgs\in\Dman[\F]$, and let $\yosh{\dgs}$ be the corresponding stack on $\op{\sderca[\F]}$, then $\yosh{\dgs}$ is a derived scheme.\end{theorem}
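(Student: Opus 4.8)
The plan is to exhibit an affine open atlas for $\yosh{\dgs}$ built from an affine open cover of the ambient classical scheme $\cs$, and then to check that the resulting charts are open immersions of derived stacks. Fix an affine open atlas $\lf U_i\rf_{i\in I}$ of $\cs$ and set $\dgs_i:=\lp U_i,\streaf[\bullet]{\dgs}|_{U_i}\rp$, so that $\dgs_i\in\Dmana[\F]$. Under the Quillen equivalence $\stacks{\Dman[\F]}\simeq\stacks{\Dmana[\F]}$ of Theorem \ref{QuillenEq}, followed by the Quillen adjunction $\stacks{\Dmana[\F]}\leftrightarrows\stacks{\op{\derca[\F]}}$ established above and the Dold--Kan equivalence $\simca[\F]\simeq\derca[\F]$, each $\yosh{\dgs_i}$ is equivalent to the affine derived scheme on the simplicial algebra corresponding to the dg algebra of functions of $\dgs_i$. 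So $\lf\yosh{\dgs_i}\rightarrow\yosh{\dgs}\rf_{i\in I}$ is the candidate affine atlas, and (referring to the definition of derived scheme in \cite{To14}, cf.\@ also \cite{To10} \S5.2) it remains to check that this family is a covering and that each of its members is a Zariski open immersion. In fact, by Theorem \ref{CechCovers}, $\yosh{\dgs}$ is even presented as a homotopy colimit of the affine stacks $\yosh{\dgs_s}$ via an explicit hypercover.

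That the family is a covering is immediate. The morphism $\underset{i\in I}\bigsqcup\dgs_i\rightarrow\dgs$ is an \'etale covering in $\Dman[\F]$: on associated classical schemes it is the open cover $\underset{i\in I}\bigsqcup\hofu[0]\lp\dgs_i\rp\rightarrow\hofu[0]\lp\dgs\rp$, which is surjective and \'etale, while in each negative degree the cohomology sheaves are obtained by restriction, so the base-change condition of the covering definition holds strictly. Applying the Yoneda embedding and the left Quillen functors above (or invoking Theorem \ref{CechCovers} directly), the induced $\underset{i\in I}\coprod\yosh{\dgs_i}\rightarrow\yosh{\dgs}$ is a local epimorphism, hence an epimorphism of stacks.

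For the open immersion property, fix $i$. The classical truncation $t_0$ of $\yosh{\dgs_i}\rightarrow\yosh{\dgs}$ is the Zariski open immersion $\hofu[0]\lp\dgs_i\rp\hookrightarrow\hofu[0]\lp\dgs\rp$ of schemes, which in particular is flat and locally of finite presentation. It remains to see that $\yosh{\dgs_i}\rightarrow\yosh{\dgs}$ is a monomorphism, and this is where Proposition \ref{TwoIntersections} does the work: inside $\cs$ one has $U_i\cap U_i=U_i$, hence $\dgs_i\underset{\dgs}\times\dgs_i\cong\dgs_i$ in $\Dman[\F]$, and Proposition \ref{TwoIntersections} together with left-exactness of the localization from prestacks to stacks yields
\begin{eqn}\yosh{\dgs_i}\underset{\yosh{\dgs}}\times^{h}\yosh{\dgs_i}\;\simeq\;\yosh{\dgs_i\underset{\dgs}\times\dgs_i}\;\simeq\;\yosh{\dgs_i},\end{eqn}
so the diagonal $\yosh{\dgs_i}\rightarrow\yosh{\dgs_i}\underset{\yosh{\dgs}}\times^{h}\yosh{\dgs_i}$ is an equivalence; equivalently, the cocycle description of mapping spaces (proof of Theorem \ref{ManifoldsInclusion}) together with Lemma \ref{ConnectedComp} shows that the image of $\mapis\lp\dgt',\yosh{\dgs_i}\rp$ in $\mapis\lp\dgt',\yosh{\dgs}\rp$ is a union of connected components for every $\dgt'$. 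A flat, locally finitely presented monomorphism whose classical truncation is an open immersion is an open immersion, so each $\yosh{\dgs_i}\rightarrow\yosh{\dgs}$ is a Zariski open immersion. Therefore $\yosh{\dgs}$ carries an affine open atlas, and since $t_0\lp\yosh{\dgs}\rp$ is the (quasi-projective) scheme $\hofu[0]\lp\dgs\rp$, we conclude that $\yosh{\dgs}$ is a derived scheme in the sense of \cite{To14}.

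The only genuinely non-formal input is the monomorphism property of the charts, i.e.\@ the assertion that intersections of affine charts on a dg manifold are computed by honest homotopy pullbacks of the represented stacks and not merely by maps into them; this is precisely Proposition \ref{TwoIntersections}, whose proof rests on Lemma \ref{ConnectedComp} and ultimately on the category-of-fibrant-objects structure of $\Dman[\F]$, and I expect it to be the crux. The rest is bookkeeping: transporting $\yosh{\dgs}$ and its atlas along $\stacks{\Dman[\F]}\simeq\stacks{\Dmana[\F]}\rightarrow\stacks{\op{\derca[\F]}}$ into the simplicial-algebra world where the definition of \cite{To14} is literally stated, and extracting the flatness and finite-presentation parts of "open immersion" from the purely classical statement about $\hofu[0]$ --- routine, because representables, \'etale coverings and the relevant homotopy pullbacks are all preserved along the way.
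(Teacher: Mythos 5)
Your proposal is correct and follows essentially the same route as the paper: an affine atlas of the ambient scheme gives the candidate charts, Theorem \ref{CechCovers} provides the hypercover (hence the epimorphism from the coproduct of affines), and Proposition \ref{TwoIntersections} combined with the fact that stackification preserves homotopy fiber products gives the Zariski open immersion property of each chart. Your extra unpacking of the open-immersion condition into a monomorphism check via the self-intersection $\yosh{\dgs_i}\underset{\yosh{\dgs}}\times^{h}\yosh{\dgs_i}\simeq\yosh{\dgs_i}$ plus the classical truncation is just a more explicit rendering of the paper's one-line conclusion.
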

\begin{proof} Let $\lf U_i\rf_{i\in I}$ be a finite affine open atlas on $\cs$, and let $\lf\dgs_i\rf_{i\in I}$ be the corresponding dg affine atlas. Theorem \ref{CechCovers} tells us that $\lf\yo{\dgs_i}\rf_{i\in I}$ define a hypercover of $\yo{\dgs}$, in particular $\underset{i\in I}\coprod\;\yo{\dgs_i}\rightarrow\yosh{\dgs}$ is an epimorphism. Since the associated stack functor preserves homotopy fiber products (\cite{HAG1} Prop.\@ 3.4.10), Proposition \ref{TwoIntersections} tells us that each $\yo{\dgs_i}\rightarrow\yosh{\dgs}$ is a Zariski open immersion.\end{proof}

\end{document}